\numberwithin{equation}{section}
\newcommand{\st}{:}
\let\emptyset\varnothing
\DeclareMathOperator{\End}{End}
\DeclareMathOperator{\Hom}{Hom}
\DeclareMathOperator{\Ext}{Ext}
\DeclareMathOperator{\id}{id}
\newcommand{\cc}[1]{\mathscr{C}_{#1}}
\newcommand{\sgn}{\mathsf{sign}\,}
\newcommand{\sign}[1]{\mathsf{sign}(#1)}
\newcommand{\dtmnt}{\mathsf{Det}\,}
\newcommand{\exch}[1]{\mathrm{Exch}(#1)}
\newcommand{\sfe}{\mathsf{E}}
\newcommand{\sfm}{\mathsf{M}}
\newcommand{\sfo}{\mathsf{O}}
\newcommand{\sft}{\mathsf{T}}
\newcommand{\sfu}{\mathsf{U}}
\newcommand{\sfx}{\mathsf{X}}
\newcommand{\sfy}{\mathsf{Y}}
\newcommand{\sfz}{\mathsf{Z}}
\newcommand{\triangrel}{\rightsquigarrow}
\newtheorem{theorem}{Theorem}[section]
\newtheorem{lemma}[theorem]{Lemma}
\newtheorem{proposition}[theorem]{Proposition}
\newtheorem{conjecture}[theorem]{Conjecture}
\newtheorem{corollary}[theorem]{Corollary}
\newenvironment{customthm}[1]
  {\innercustomthm}
  {\endinnercustomthm}
\theoremstyle{definition}
\newtheorem{definition}[theorem]{Definition}
\newtheorem{construction}[theorem]{Construction}
\newtheorem{example}[theorem]{Example}
\theoremstyle{remark}
\newtheorem{remark}[theorem]{Remark}
\title[Oriented matroids from type $\mathbb{A}$ cluster categories]{Oriented matroids from \\ type $\mathbb{A}$ cluster categories}
\author{Nicholas J. Williams}
\urladdr{https://nchlswllms.github.io/}
\email{nw480@cam.ac.uk}
\address{Department of Pure Mathematics and Mathematical Statistics, Centre for Mathematical Sciences, University of Cambridge, Wilberforce Rd, Cambridge, CB3 0WB,~UK}
\subjclass[2020]{Primary: 16G20; Secondary: 52C40, 13F60}
\keywords{Oriented matroids, triangulations, maximal green sequences, cluster categories}
\thanks{I would like to thank Rudradip Biswas for the opportunity to speak about a preliminary version of this work at the ``New Directions in Group Theory and Triangulated Categories'' seminar, and Mikhail Gorsky for the reference~\cite{fgppp}.
Thank you also to some referees for useful comments.
I~am currently supported by EPSRC grant EP/W001780/1.}
\begin{document}

\begin{abstract}
For any cluster-tilting object $\sft$ in the cluster category $\cc{n}$ of type~$\mathbb{A}_{n}$, we construct a rank-four oriented matroid $\mathcal{M}_{\sft}$ such that stackable triangulations of $\mathcal{M}_{\sft}$ are in bijection with equivalence classes of maximal green sequences with initial cluster~$\sft$.
This generalises the result that equivalence classes of maximal green sequences of linearly oriented $\mathbb{A}_{n}$ are in bijection with triangulations of a three-dimensional cyclic polytope.
The definition of the oriented matroid $\mathcal{M}_{\sft}$ arises from the extriangulated structure on $\cc{n}$ which makes $\sft$ projective.
\end{abstract}

\maketitle

\section{Introduction}

From the very beginning, there has been an intimate relationship between cluster algebras and the combinatorics of triangulations.
The most fundamental example of this is the $\mathbb{A}_{n}$ cluster algebra \cite{fz1}, whose cluster variables are in bijection with arcs in a convex $(n + 3)$-gon, with clusters in bijection with triangulations, and mutation corresponding to flipping an arc in a quadrilateral.
This was extended to cluster algebras from triangulations of surfaces in \cite{fst}.

Higher-even-dimensional triangulations were connected with cluster algebras in \cite{ot}, and higher-odd-dimensional ones in \cite{njw-hst}.
In particular, in \cite{njw-hst} it was shown that equivalence classes of maximal green sequences of a linearly oriented type $\mathbb{A}$ quiver were in bijection with triangulations of a three-dimensional cyclic polytope.
Maximal green sequences are finite sequences of mutations in a cluster algebra, first introduced into the mathematics literature by Keller \cite{kel-green}, and into the physics literature by Cecotti, Cordova, and Vafa \cite[Section~4.3]{ccv}.
In fact, these physicists already made a connection between maximal green sequences and three-dimensional triangulations \cite[Section~5]{ccv}.
This appearance of maximal green sequences in physics is related to their role in quantum dilogarithm identities \cite{reineke_poisson} and the Kontsevich--Soibelman wall-crossing formula from Donaldson--Thomas theory \cite{ks_stability}.

Let us explain maximal green sequences in more detail.
Choosing an initial cluster in a cluster algebra orients mutations so that some are forwards and some are backwards.
There are combinatorial rules for colouring the vertices of a quiver green or red such that forwards mutation corresponds to mutating at a green vertex, which then turns it red; backwards mutation is then the reverse of this.
In this picture, a maximal green sequence is a finite sequence of mutations at green vertices such that the quiver turns from all green to all red.
Forwards mutation may thus be called ``green mutation'', with backwards mutation called ``red''.

Given the bijection from \cite{njw-hst} between triangulations of three-dimensional cyclic polytopes and equivalence classes of maximal green sequences of linearly-oriented $\mathbb{A}_{n}$, the following natural question then arises.
Given an arbitrary orientation of the type $\mathbb{A}$ Dynkin diagram, is there a three-dimensional polytope whose triangulations are in bijection with its maximal green sequences, up to equivalence?
Following \cite{njw-hst}, this bijection should be such that the simplices in the polytope are in bijection with the mutations in the maximal green sequence, so that the number of simplices of the triangulation equals the length of the maximal green sequence.

One can deduce what properties such a three-dimensional polytope should have.
An exchange pair in a maximal green sequence corresponds to flipping an arc in a quadrilateral.
As in \cite{njw-hst}, one should lift this quadrilateral to a $3$-simplex such that the green mutation goes from the lower side of the simplex to the upper side.
These two ways of lifting a quadrilateral to a $3$-simplex are shown in Figure~\ref{fig:quad_3_simplex}.
Choosing whether a $3$-simplex in our hypothetical polytope is like the left-hand simplex in this figure or the right-hand simplex corresponds to choosing an ``orientation'' for the $3$-simplex.
This determines the ``oriented matroid'' of the hypothetical polytope.

\begin{figure}
\[
\begin{tikzpicture}[scale=1]


\begin{scope}

\coordinate(1) at (-1.5,1);
\coordinate(2) at (-2,0);
\coordinate(3) at (2,0);
\coordinate(4) at (1.5,1);

\draw (1) -- (2) -- (3) -- (4) -- (1);

\draw (1) -- (3);
\draw (2) -- (4);

\node at (1) {$\bullet$};
\node at (2) {$\bullet$};
\node at (3) {$\bullet$};
\node at (4) {$\bullet$};

\end{scope}


\begin{scope}[shift={(-4,3)}]

\coordinate(1) at (-1.5,1);
\coordinate(2) at (-2,0);
\coordinate(3) at (2,0);
\coordinate(4) at (1.5,1);

\draw (1) -- (2) -- (3) -- (4) -- (1);

\draw[dashed] (1) -- (3);
\draw (2) -- (4);

\node at (1) {$\bullet$};
\node at (2) {$\bullet$};
\node at (3) {$\bullet$};
\node at (4) {$\bullet$};

\draw[dotted] (0,-0.2) -- (2.2,-1.65);

\end{scope}


\begin{scope}[shift={(4,3)}]

\coordinate(1) at (-1.5,1);
\coordinate(2) at (-2,0);
\coordinate(3) at (2,0);
\coordinate(4) at (1.5,1);

\draw (1) -- (2) -- (3) -- (4) -- (1);

\draw (1) -- (3);
\draw[dashed] (2) -- (4);

\node at (1) {$\bullet$};
\node at (2) {$\bullet$};
\node at (3) {$\bullet$};
\node at (4) {$\bullet$};

\draw[dotted] (0,-0.2) -- (-2.2,-1.65);

\end{scope}

\end{tikzpicture}
\]
\caption{Two ways to lift a quadrilateral to a $3$-simplex}\label{fig:quad_3_simplex}
\end{figure}

In order to determine the orientation of each $3$-simplex, we need a criterion which tells us, given an initial cluster, which mutations are green.
A convenient criterion is given by \cite{fgppp} in terms of the cluster category of \cite{bmrrt}, where a choice of initial cluster is given by a choice of cluster-tilting object~$\sft$.
In particular, different orientations of $\mathbb{A}_{n}$ can be achieved by different choices of~$\sft$.
Of course, it is not obvious that orienting the $3$-simplices according to which exchange pairs are made green by the cluster-tilting object $\sft$ gives a well-defined oriented matroid $\mathcal{M}_{\sft}$: certain axioms need to be satisfied.
The first main theorem of this paper is that these axioms do in fact hold.
We moreover conjecture that $\mathcal{M}_{\sft}$ is indeed the oriented matroid of some polytope.

\begin{customthm}{A}[{Theorem~\ref{thm:matroid}}]\label{thm:int:matroids}
Given a cluster-tilting object $\sft$ in the cluster category $\cc{n}$ of type $\mathbb{A}_{n}$, we have that $\mathcal{M}_{\sft}$ is a well-defined oriented matroid.
\end{customthm}

The oriented matroids $\mathcal{M}_{\sft}$ are orientations of the uniform matroid: all possible $3$-simplices are bases of the matroid.
Orientations of the uniform matroid have been studied recently in the context of their associated ``chirotropical Grassmannians'', which were introduced in \cite{cez} in the context of scattering amplitudes, and subsequently studied in \cite{ae_tcg}.
While uniform matroids are always realisable \cite[Section~8.3]{blswz}, uniform \textit{oriented} matroids are not always realisable \cite[Proposition~8.3.2]{blswz}.

Having proved that the oriented matroids $\mathcal{M}_{\sft}$ are well-defined, we construct the desired bijection between equivalence classes of maximal green sequences with initial cluster $\sft$ and the subset of triangulations of $\mathcal{M}_{\sft}$ which are stackable.

\begin{customthm}{B}[{Theorem~\ref{thm:main}}]\label{thm:int:mgs}
Let $\sft$ be a basic cluster-tilting object in $\cc{n}$.
There is then a bijection between stackable triangulations of $\mathcal{M}_{\sft}$ and equivalence classes of maximal green sequences with initial cluster~$\sft$.
\end{customthm}

We conjecture that in fact all triangulations of $\mathcal{M}_{\sft}$ are stackable.
If this is so, and $\mathcal{M}_{\sft}$ is realisable, then for every basic cluster-tilting object $\sft \in \cc{n}$, there is a polytope whose triangulations are in bijection with equivalence classes of maximal green sequences with initial cluster~$\sft$.

This paper is structured as follows.
In Section~\ref{sect:back} we give requisite background on oriented matroids, their triangulations, cluster categories, and maximal green sequences.
In Section~\ref{sect:matroid_construction}, we construct the oriented matroids and prove Theorem~\ref{thm:int:matroids}.
In Section~\ref{sect:bijection}, we then prove the relation between their triangulations and maximal green sequences, and prove Theorem~\ref{thm:int:mgs}.

\section{Background}\label{sect:back}

We start by giving background on oriented matroids, cluster categories, and maximal green sequences.

\subsection{Oriented matroids}

A matroid is a structure abstracting the linear dependence relations that hold among a finite set of vectors.
An oriented matroid is a matroid with some extra structure --- an ``orientation'' --- which also keeps track of the signs of the coefficients in these linear relations.
We will follow the standard reference for oriented matroids \cite{blswz}.

Matroids are famous for having many different ``cryptomorphic'' definitions.
One such definition axiomatises the notion of the ``bases'' of a matroid, which correspond to bases in the standard sense of linear algebra.
Using the bases definition of a matroid, an orientation of a matroid is given by assigning a sign to every ordered basis, corresponding to the sign of the determinant of the ordered basis.
This assignment of signs gives an object called a chirotope, which we now describe.

\begin{definition}[{\cite[Definition~3.5.3]{blswz}}]\label{def:chiro}
Let $r \geqslant 1$ be an integer, and let $E$ be a finite set, known as the \emph{ground set}.
A \emph{chirotope} of \emph{rank} $r$ on $E$ is a map $\chi \colon E^r \to \{-1, 0, +1\}$ which satisfies the following three properties.
\begin{enumerate}
\item $\chi$ is not identically zero.
\item $\chi$ is alternating, that is, \[
\chi(x_{\pi(1)}, x_{\pi(2)}, \dots, x_{\pi(r)}) = \sign{\pi}\chi(x_{1}, x_{2}, \dots, x_{r})
\] for all $x_{1}, x_{2}, \dots, x_{r} \in E$ and every permutation $\pi$ of $[r] := \{1, 2, \dots, r\}$.\label{op:chiro:alt}
\item For all $x_{1}, x_{2}, \dots, x_{r}, y_{1}, y_{2}, \dots, y_{r} \in E$ such that \[
\chi(y_{i}, x_{2}, x_{3}, \dots, x_{r}) \chi(y_{1}, y_{2}, \dots, y_{i - 1}, x_{1}, y_{i + 1}, y_{i + 2}, \dots, y_{r}) \geqslant 0
\]
for all $i \in [r]$, we have that \[
\chi(x_{1}, x_{2}, \dots, x_{r}) \chi(y_{1}, y_{2}, \dots, y_{r}) \geqslant 0.
\]
\end{enumerate}

The chirotope $\chi$ is \emph{realisable} (over $\mathbb{R}$) if there exists a map $\rho \colon E \to \mathbb{R}^r$ such that \[
\chi(x_{1}, x_{2}, \dots, x_{r}) = \sgn \dtmnt (\rho(x_{1}) \rho(x_{2}) \dots \rho(x_{r})),
\] 
where here we have specified a matrix by its column vectors.

A \emph{basis} of $\chi$ is a subset $B = \{b_{1}, \dots, b_{r}\} \subseteq E$ such that $\chi(b_{1}, \dots, b_{r}) \neq 0$.
Note that by \eqref{op:chiro:alt}, this is independent of the ordering on~$B$.
A subset of $E$ is \emph{independent} if it is contained in a basis, and otherwise it is \emph{dependent}.
A dependent subset of $E$ which is minimal with respect to inclusion is called a \emph{circuit}.
\end{definition}

The set of bases of $\chi$ constitutes the \emph{underlying unoriented matroid}.
If the set of bases is $\binom{E}{r} := \{B \subseteq E \st |B| = r\}$, then the underlying unoriented matroid is the \emph{uniform matroid}.
This will be the case for the matroids that we study in this paper.

The chirotope of an oriented matroid can be used to give the circuits signatures in the following sense.
We do not give detail on how the signature of a circuit is derived from the chirotope; the reference is \cite[Lemma~3.5.7]{blswz}.

\begin{definition}[{\cite[p.102]{blswz}}]
A \emph{signed set} $X$ is a set $\underline{X}$ together with a partition $(X^+, X^-)$ of $\underline{X}$ into two distinguished subsets.
Here $\underline{X}$ is called the \emph{support} of~$X$.
We call $(X^{+}, X^{-})$ a \emph{signature} of~$X$.
Given a set $E$, a \emph{signed subset} of $E$ is a signed set $X$ with $\underline{X} \subseteq E$.
\end{definition}

One can specify axioms for a set of signed subsets of the ground set $E$ to be the set of signed circuits of an oriented matroid \cite[Definition~3.2.1]{blswz}.
Note that we do not identify an oriented matroid with its chirotope or with its set of signed circuits.
We think of oriented matroids as objects in the style of object-oriented programming: an oriented matroid has a chirotope and a set of signed circuits as attributes; it is determined by specifying either of these pieces of data, but it is not identified with either of them.
We say that an oriented matroid is \emph{realisable} (over $\mathbb{R}$) if its chirotope is realisable (over $\mathbb{R}$).
The following example of an oriented matroid will occur in a couple of places later in the paper.

\begin{example}\label{ex:cyclic_polytope}
We denote $[m] := \{1, 2, \dots, m\}$.
There is a rank-four oriented matroid on $[m]$ defined by the chirotope \[
\chi(a, b, c, d) = +1 \quad \text{ if } a < b < c < d,
\]
whose value on quadruples which are not totally ordered is determined by the requirement that $\chi$ is alternating.
This is the oriented matroid of a three-dimensional cyclic polytope with $m$ vertices and can be realised using Vandermonde matrices, see \cite[Section~9.4]{blswz} for more details.
The signed circuits of this oriented matroid are given by $(\{a, c, e\}, \{b, d\})$ and $(\{b, d\}, \{a, c, e\})$ for $a < b < c < d < e$ \cite{breen}.
\end{example}

In this paper, it will be useful to restrict an oriented matroid to a subset.

\begin{proposition}[{\cite[Proposition~3.3.1, pp.133--4]{blswz}}]\label{prop:matroid_restriction}
Let $\mathcal{M}$ be an oriented matroid on a ground set $E$ with set of signed circuits $\mathcal{C}$ and let $V$ be a subset of $E$.
Then $\mathcal{C}(V) := \{C \in \mathcal{C} \st \underline{C} \subseteq V\}$, the set of circuits of $\mathcal{M}$ contained in $V$, is the set of signed circuits of an oriented matroid on $V$.
We call this oriented matroid the \emph{restriction} of $\mathcal{M}$ to $V$, and denote it by $\mathcal{M}(V)$.

Let $\chi \colon E^{r} \to \{-1, 0, +1\}$ be the chirotope of $\mathcal{M}$.
If $\mathcal{M}(V)$ also has rank $r$, then its chirotope is $\chi|_{V^r}$.
\end{proposition}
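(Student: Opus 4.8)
The plan is to prove the two assertions separately. First, that $\mathcal{C}(V)$ satisfies the signed-circuit axioms, so that $\mathcal{M}(V)$ is a genuine oriented matroid on $V$; and second, that under the rank hypothesis the restricted map $\chi|_{V}$ is the chirotope of this oriented matroid. The guiding principle in both parts is that the defining axioms are \emph{hereditary}: they are universally quantified statements whose witnesses never leave the support of the data fed into them, so that shrinking the ground set to $V$ preserves them.

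First I would verify the signed-circuit axioms of \cite[Definition~3.2.1]{blswz} for $\mathcal{C}(V)$. Non-emptiness ($\emptyset \notin \mathcal{C}(V)$) and symmetry ($\mathcal{C}(V) = -\mathcal{C}(V)$) are immediate, the latter because $\underline{-X} = \underline{X}$, so $X \in \mathcal{C}(V)$ forces $-X \in \mathcal{C}(V)$. Incomparability holds because any $X, Y \in \mathcal{C}(V)$ are in particular circuits of $\mathcal{M}$, so $\underline{X} \subseteq \underline{Y}$ already forces $X = \pm Y$ there. The only axiom needing thought is weak elimination: given $X, Y \in \mathcal{C}(V)$ with $X \neq -Y$ and some $e \in X^{+} \cap Y^{-}$, elimination in $\mathcal{C}$ yields $Z \in \mathcal{C}$ with $Z^{+} \subseteq (X^{+} \cup Y^{+}) \setminus \{e\}$ and $Z^{-} \subseteq (X^{-} \cup Y^{-}) \setminus \{e\}$. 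The crucial observation is that then $\underline{Z} \subseteq (\underline{X} \cup \underline{Y}) \setminus \{e\} \subseteq V$, so in fact $Z \in \mathcal{C}(V)$. This gives the first claim.

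For the chirotope, I would show that $\chi|_{V} \colon V^{r} \to \{-1, 0, +1\}$ is a rank-$r$ chirotope whose associated oriented matroid is $\mathcal{M}(V)$. Being alternating and satisfying the Grassmann--Pl\"ucker-type relation of Definition~\ref{def:chiro} are inherited verbatim, since these conditions are quantified over tuples of ground-set elements and restricting attention to tuples in $V$ only weakens the quantifier. The sole place the rank hypothesis enters is the requirement that $\chi|_{V}$ be not identically zero: this says exactly that $V$ contains a basis of size $r$, which is precisely the assumption that $\mathcal{M}(V)$ has rank $r$. Finally I would invoke the cryptomorphism between chirotopes and signed circuits from \cite[Section~3.5]{blswz}: a signed set is a circuit of the oriented matroid determined by $\chi|_{V}$ if and only if its support is a minimal dependent set contained in $V$ and its signs are read off from the values of $\chi|_{V}$ on $r$-tuples drawn from that support. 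Since dependence of a subset of $V$ is detected identically by $\chi$ and by $\chi|_{V}$, and since the sign formula only evaluates $\chi$ on tuples already lying in $V$, the circuits of $\chi|_{V}$ are exactly $\mathcal{C}(V)$. Hence the oriented matroid with circuit set $\mathcal{C}(V)$ has chirotope $\chi|_{V}$.

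The main obstacle is conceptual rather than computational, namely the role of the rank hypothesis. Without it $V$ might fail to span, in which case every $r$-tuple from $V$ is dependent, $\chi|_{V}$ vanishes identically, and $\mathcal{M}(V)$ has rank strictly less than $r$; its chirotope is then a map on $V^{r'}$ for some $r' < r$ rather than $\chi|_{V}$. Beyond this, the one step where I would lean on the literature rather than re-derive is pinning down the circuit--chirotope cryptomorphism precisely enough that the two passages are genuinely inverse on the restricted data; this is exactly the content of the correspondence in \cite[Section~3.5]{blswz}.
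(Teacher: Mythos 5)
This proposition is imported verbatim from \cite[Proposition~3.3.1]{blswz}; the paper supplies no proof of its own, so there is no internal argument to compare against. Your proof is the standard one and is correct: the signed-circuit axioms are hereditary because the circuit $Z$ produced by weak elimination has $\underline{Z} \subseteq (\underline{X} \cup \underline{Y}) \setminus \{e\} \subseteq V$, the alternating and exchange conditions for $\chi|_{V}$ follow by weakening universally quantified statements, and you correctly isolate the rank hypothesis as exactly what guarantees $\chi|_{V} \not\equiv 0$. The one step stated slightly loosely is the final appeal to the circuit--chirotope cryptomorphism, but the essential point is present: the sign formula recovering a circuit from a chirotope only evaluates $\chi$ on $r$-tuples drawn from that circuit's $(r+1)$-element support, so for circuits supported in $V$ it factors through $\chi|_{V}$, and hence the two restricted structures determine the same oriented matroid.
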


For every oriented matroid $\mathcal{M}$ on a set $E$, there is a dual oriented matroid $\mathcal{M}^{\ast}$ on $E$ \cite[Proposition~3.4.1]{blswz}.
The (unsigned) circuits of $\mathcal{M}^{\ast}$ are the subsets $Y \subseteq E$ which are inclusion-minimal with respect to the property that $|Y \cap X| \neq 1$ for all circuits $X$ of~$\mathcal{M}$.
The circuits of $\mathcal{M}^{\ast}$ are known as the \emph{cocircuits} of~$\mathcal{M}$.
Lemma~\ref{lem:chi->cocirc}, which we will see later, tells us how the signatures of the cocircuits can be derived from the chirotope.

If $\mathcal{M}$ is realisable, then linear hyperplanes give cocircuits of $\mathcal{M}$: one half of the cocircuit is the subset of $E$ lying on one side of the hyperplane, the other half is the subset lying on the other \cite[p.116]{blswz}, and the complement of the cocircuit is the subset lying in the hyperplane.

\begin{definition}[{\cite[p.377]{blswz}}]\label{def:hyp+facet}
A signed circuit $X$ is \emph{positive} if $\underline{X} = X^{+}$. 
An oriented matroid $\mathcal{M}$ is \emph{acyclic} if it does not have a positive circuit.

Let $\mathcal{M}$ be an acyclic oriented matroid on a set~$E$.
A \emph{hyperplane} of $\mathcal{M}$ is a subset $H \subseteq E$ such that $E \setminus H$ is a cocircuit of~$\mathcal{M}$.
An \emph{open halfspace} of $\mathcal{M}$ is $Y^+$ for $Y$ a cocircuit of $\mathcal{M}$.
A \emph{facet} of $\mathcal{M}$ is a hyperplane $H$ such that $E \setminus H$ is an open halfspace.
\end{definition}

Finally, we define triangulations of oriented matroids.

\begin{definition}[{\cite[Theorem~2.4(f)]{santos_tom}}]\label{def:mat_triang}
Let $\mathcal{M}$ be an acyclic oriented matroid of rank $r$ on a set $E$.
A non-empty collection $\Delta$ of bases of $\mathcal{M}$ is called a \emph{triangulation} of $\mathcal{M}$ if the following conditions hold.
\begin{enumerate}
\item If $\sigma \in \Delta$, then each facet $\tau$ of $\mathcal{M}(\sigma)$ is either contained in a facet of $\mathcal{M}$ or there exists $\sigma' \in \Delta \setminus \{\sigma\}$ such that $\tau \subseteq \sigma'$.\label{op:mat_triang:facet}
\item No two bases $\sigma$ and $\tau$ \emph{overlap on a circuit} $C = (C^{+}, C^{-})$, meaning that $C^{+} \subseteq \sigma$ and there is an element $a \in C^{+}$ such that $\underline{C} \setminus \{a\} \subseteq \tau$.\label{op:mat_triang:int}
\end{enumerate}
We also refer to the bases in $\Delta$ as \emph{$(r - 1)$-simplices}.
\end{definition}

\subsection{Cluster categories of type~$\mathbb{A}$}\label{sect:back:cca}

Cluster categories were introduced in the paper \cite{bmrrt}, which we refer to for full background.
In type~$\mathbb{A}$, these categories were first introduced in \cite{ccs}.
Given an acyclic quiver~$Q$, its path algebra over a field $K$ is denoted $KQ$.
We write $\mathscr{D}^{b}(KQ)$ for the bounded derived category of right $KQ$-modules which are finite-dimensional over~$K$.
The \emph{cluster category} $\cc{Q}$ of $Q$ is the orbit category $\mathscr{D}^{b}(KQ)/\nu\Sigma^{-2}$, where $\nu := D(KQ) \otimes^{\mathbf{L}}_{KQ}-\colon \mathscr{D}^{b}(KQ) \to \mathscr{D}^{b}(KQ)$ is the Nakayama functor and $\Sigma$ is the suspension of $\mathscr{D}^{b}(KQ)$.
Here $D(-) = \Hom_{K}(-, K)$ is the standard duality.

The linearly oriented $\mathbb{A}_{n}$ quiver is $1 \leftarrow 2 \leftarrow \dots \leftarrow n$ and we denote its cluster category by $\cc{n}$.
For technical reasons, we prefer to have the arrows of this quiver going in this direction rather than to the right, which is more usual.
(However, $\cc{n}$ is independent of the choice of orientation of the quiver.)
The cluster category $\cc{n}$ is a $K$-linear triangulated category with finitely many indecomposable objects, and one can describe its morphisms and suspension functor combinatorially.
Indeed, up to isomorphism, the indecomposable objects of $\cc{n}$ may be labelled $\sfo_{B}$ for $B = \{b_{1}, b_{2}\} \in \binom{[n + 3]}{2}$ with $b_{1} \neq b_{2} \pm 1$.
Here, when we write $b_{2} \pm 1$, we are using modular arithmetic with representatives in $[n + 3]$, rather than $\{0, 1, \dots, n + 2\}$, as is more usual; we will continue do this in this paper.
Hence, indecomposable objects in $\cc{n}$ are in bijection with arcs in a convex $(n + 3)$-gon.

We call a permutation in the subgroup of the symmetric group $\mathfrak{S}_{m}$ generated by the permutation $x \mapsto x + 1$ a \emph{cyclic} permutation.
By \emph{rotation}, we mean applying a cyclic permutation.
Given $\{a_{1}, a_{2}, \dots, a_{l}\} \subseteq [m]$ with $l \geqslant 3$, we write $a_{1} \prec a_{2} \prec \dots \prec a_{l}$ if there is a cyclic permutation $\pi \in \mathfrak{S}_{m}$ such that $\pi(a_{1}) < \pi(a_{2}) < \dots < \pi(a_{l})$ in the usual ordering on $[m]$.
We call the ordering $a_{1} \prec a_{2} \prec \dots \prec a_{l}$ a \emph{cyclic ordering}. 
We use the symbol $\preccurlyeq$ with the obvious meaning: either the elements either side of $\preccurlyeq$ are equal, or the cyclic ordering holds.
We then have that $\Hom_{\cc{n}}(\sfo_{B}, \sfo_{C}) \neq 0$ if and only if $b_{1} - 1 \prec c_{1} \prec b_{2} - 1 \prec c_{2}$ with $B = \{b_{1}, b_{2}\}$ and $C = \{c_{1}, c_{2}\}$.
Moreover, we have that if $\Hom_{\cc{n}}(\sfo_{B}, \sfo_{C}) \neq 0$, then $\Hom_{\cc{n}}(\sfo_{B}, \sfo_{C}) \cong K$.
Finally, we have $\Sigma\sfo_{B} = \sfo_{B - 1}$, where $B \pm 1 := \{b \pm 1 \st b \in B\}$ and $\Sigma$ likewise denotes the suspension functor in the category $\cc{n}$.
These descriptions follow, for instance, from specialising the results of \cite{ot}, but we will also explain them more in the appendix.

An object $\sft$ of $\cc{n}$ is \emph{cluster-tilting} if $\Hom_{\cc{n}}(\sft, \Sigma\sft) = 0$ and it has $n$ isomorphism classes of indecomposable summands.
There is a bijection between basic cluster-tilting objects in $\cc{n}$ and triangulations of the $(n + 3)$-gon via the map sending a basic cluster-tilting object $\sft = \bigoplus_{i = 1}^n \sfo_{B_i}$ to the triangulation with set of arcs $\{B_{1}, B_{2}, \dots, B_{n}\}$.
This is because the condition of $\sft$ being cluster-tilting is equivalent to the condition that none of the arcs $B_{i}$ cross.
Recall that \emph{basic} means that distinct summands are non-isomorphic.

\subsection{Maximal green sequences}

To suit our purposes, we use a different presentation of maximal green sequences to the usual one.
Our presentation uses the concept of an extriangulated category, for background on which, see \cite{np}.
Given a cluster-tilting object $\sft$ in $\cc{n}$, we consider the extriangulated structure on $\cc{n}$ given by those triangles $\sfx \to \sfy \to \sfz \to \Sigma\sfx$ for which the morphism $\sfz \to \Sigma\sfx$ factors through~$\Sigma\sft$.
We write $\Ext_{\sft}^{1}(\sfz, \sfx)$ for $\Ext$ in this extriangulated structure.
This extriangulated structure was first introduced in \cite[Section~4.7.1]{pppp} and is an example of a relative extriangulated structure as studied in \cite[Section~3.2]{hln}.

We say that two basic cluster tilting objects $\sft_{1}$ and $\sft_{2}$ of $\cc{n}$ are related by a \emph{mutation} if $\sft_{1} = \sfe \oplus \sfx$ and $\sft_{2} = \sfe \oplus \sfy$, where $\sfx$ and $\sfy$ are indecomposable and not isomorphic.
We say that the mutation from $\sft_{1}$ to $\sft_{2}$ is \emph{$\sft$-green} if $\Ext_{\sft}^{1}(\sfy, \sfx) \neq 0$.
We then say that $(\sfx, \sfy)$ is the \emph{exchange pair} of the mutation.

A \emph{maximal green sequence} of $\sft$ is a sequence $\mathcal{G} = (\sft_{0}, \sft_{1}, \dots, \sft_{l})$ of basic cluster-tilting objects $\sft_{i} \in \cc{n}$ such that $\sft_{0} = \sft$, $\sft_{l} = \Sigma\sft$, and for all $i \in [l]$, $\sft_{i}$ is a $\sft$-green mutation of $\sft_{i - 1}$.
Denote by $(\sfx_{i}, \sfy_{i})$ the exchange pair of the $\sft$-green mutation from $\sft_{i - 1}$ to $\sft_{i}$.
We say that $\sfx_{i}$ \emph{enters} $\mathcal{G}$ at $(\sfx_{i}, \sfy_{i})$ and $\sfy_{i}$ \emph{leaves}~$\mathcal{G}$.
We write $\exch{\mathcal{G}} := \{(\sfx_{1}, \sfy_{1}), \dots, (\sfx_{l}, \sfy_{l})\}$
for the set of exchange pairs of $\mathcal{G}$.
We say that two maximal green sequences $\mathcal{G}$ and $\mathcal{G}'$ of $\sft$ are \emph{equivalent} if $\exch{\mathcal{G}} = \exch{\mathcal{G}'}$.
See \cite[Theorem~3.19]{njw-mg} for other characterisations of this equivalence relation.

\begin{remark}
What we call a maximal green sequence of $\sft$ corresponds to a maximal green sequence of $\End_{\cc{n}}\sft$ in the literature.
We explain how our definition is equivalent to the definition of maximal green sequence using two-term silting complexes (see \cite[Section~2.3.5]{njw-mg}).

It follows from \cite[Theorem~6.15]{fgppp} that the category $K^{[-1,0]}(\mathrm{proj}\, \End_{\cc{n}}\sft)$ of two-term complexes of projectives over $\End_{\cc{n}}\sft$ is equivalent, as an extriangulated category, to the extriangulated ideal quotient of $\cc{n}$ (with the above extriangulated structure) by the morphisms from the shifted projectives to the projectives.
It follows from \cite[Proposition~5.1]{fgppp} that the relative extriangulated structure used in \cite[Theorem~6.15]{fgppp} is the one described above.
Moreover, this quotient induces a bijection between basic cluster-tilting objects in $\cc{n}$ and two-term silting complexes in $K^{[-1,0]}(\mathrm{proj}\, \End_{\cc{n}}\sft)$ \cite[Remark~6.20]{fgppp}.

The definition of a maximal green sequence of $\End_{\cc{n}}\sft$ in terms of two-term silting complexes is a sequence of green mutations of basic two-term silting complexes from the projectives to the shifted projectives.
By the equivalence of extriangulated categories from the previous paragraph, this corresponds to our definition of a maximal green sequence of $\sft$ as a sequence of $\sft$-green mutations from $\sft$ to $\Sigma\sft$.
\end{remark}

\section{Constructing the oriented matroids}\label{sect:matroid_construction}

As in Section~\ref{sect:back:cca}, a cluster-tilting object in $\cc{n}$ corresponds to a triangulation of a convex $(n + 3)$-gon.
In this section, we will construct an oriented matroid $\mathcal{M}_{T}$ from a triangulation $T$ of a convex $(n + 3)$-gon.
In Section~\ref{sect:bijection}, we will show that if $\Sigma\sft$ is the cluster-tilting object in $\cc{n}$ corresponding to $T$, then $\mathcal{M}_{\sft} := \mathcal{M}_{T}$ is the oriented matroid with the desired properties.
That is, we will show that stackable triangulations of $\mathcal{M}_{\sft}$ are in bijection with equivalence classes of maximal green sequences of~$\sft$.

For now, let $T$ be a triangulation of a convex $m$-gon, where $m \geqslant 4$.
We label the vertices of the $m$-gon anti-clockwise by the elements of $[m]$.
We construct an oriented matroid $\mathcal{M}_{T}$ from $T$ by specifying its chirotope.
Given $x, y \in [m]$ with $x \neq y$, we denote the arc with vertices $x$ and $y$ by $xy$.
We refer to a subset of $[m]$ of size $k$ as a \emph{$k$-subset}.

\begin{definition}\label{def:mt}
The ground set of the oriented matroid $\mathcal{M}_{T}$ is $[m]$.
Let $\{a, b, c, d\}$ be a 4-subset of~$[m]$ such that $a \prec b \prec c \prec d$.
We then define \[
\chi_{T}(a, b, c, d) := \left\{
	\begin{array}{ll}
	+1 & \text{ if there is an arc } xy \text{ of } T \text{ with } a \prec b \preccurlyeq x \prec c \prec d \preccurlyeq y, \\
	-1 & \text{ if there is an arc } xy \text{ of } T \text{ with } a \preccurlyeq x \prec b \prec c \preccurlyeq y \prec d.
	\end{array}
 \right.
\]
We extend this to tuples which are not cyclically ordered by requiring that $\chi_{T}$ be alternating, in the sense of Definition~\ref{def:chiro}\eqref{op:chiro:alt}.
\end{definition}

We first need to show that $\chi_{T}$ is well-defined as a map.

\begin{lemma}
Given $\{a \prec b \prec c \prec d\} \subseteq [m]$, the triangulation $T$ cannot both contain an arc $xy$ with $a \prec b \preccurlyeq x \prec c \prec d \preccurlyeq y$ and an arc $x'y'$ with $a \preccurlyeq x' \prec b \prec c \preccurlyeq y' \prec d$.
\end{lemma}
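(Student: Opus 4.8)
The plan is to read each of the two hypothesised conditions as a statement about which of the four boundary segments determined by $a \prec b \prec c \prec d$ the endpoints of the corresponding arc lie in, and then to show that the two arcs would have to cross in their interiors, which is impossible in a triangulation. The four points $a, b, c, d$ cut the vertex set $[m]$ into four cyclic segments, and I will write $[p, q)$ for the set of vertices appearing in the cyclic order from $p$ (inclusive) up to $q$ (exclusive). I would first observe that the condition $a \prec b \preccurlyeq x \prec c \prec d \preccurlyeq y$ says precisely that $x \in [b, c)$ and $y \in [d, a)$, while the condition $a \preccurlyeq x' \prec b \prec c \preccurlyeq y' \prec d$ says precisely that $x' \in [a, b)$ and $y' \in [c, d)$.

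Since the four segments $[a, b)$, $[b, c)$, $[c, d)$, $[d, a)$ partition $[m]$, they are pairwise disjoint, so the four endpoints $x, y, x', y'$ lie in four different segments and are therefore pairwise distinct; in particular the two arcs share no endpoint, even though the weak inequalities $\preccurlyeq$ permit individual endpoints to coincide with $a, b, c$, or $d$. Reading off the segments in cyclic order starting from $a$ then yields the cyclic ordering $x' \prec x \prec y' \prec y$ of the four endpoints. Because the endpoints of the two chords alternate in this cyclic order --- the arc $xy$ separates $x'$ from $y'$ --- the chords $xy$ and $x'y'$ must cross transversally in the interior of the polygon.

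I would conclude by invoking the defining property of a triangulation $T$ of the convex $m$-gon: no two of its arcs cross in their interiors. Hence $T$ cannot contain both $xy$ and $x'y'$, which is exactly the assertion of the lemma.

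The only genuine subtlety, and the step I would take most care over, is the bookkeeping around the weak inequalities: confirming that even in the boundary cases $x = b$, $y = d$, $x' = a$, or $y' = c$ the four endpoints remain distinct and alternating, rather than degenerating into a shared endpoint that would merely make the arcs meet at a vertex. This is dispatched uniformly by the partition observation above --- membership in four distinct segments already forces distinctness --- so no separate case analysis on the coincidences is required.
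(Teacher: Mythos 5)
Your proposal is correct and follows essentially the same route as the paper's proof: both combine the two hypotheses into the single cyclic ordering $x' \prec x \prec y' \prec y$ and conclude that the arcs cross, contradicting the definition of a triangulation. Your extra care that the four endpoints are genuinely distinct (so the arcs cross in their interiors rather than meeting at a vertex) is a point the paper leaves implicit, but the argument is the same.
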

\begin{proof}
If such arcs $xy$ and $x'y'$ exist, then we have $a \preccurlyeq x' \prec b \preccurlyeq x \prec c \preccurlyeq y' \prec d \preccurlyeq y$, and so we have $x' \prec x \prec y' \prec y$.
This means that the arcs $xy$ and $x'y'$ cross, which contradicts the fact that $T$ is a triangulation.
\end{proof}

\begin{lemma}
Given $\{a \prec b \prec c \prec d\} \subseteq [m]$, the triangulation $T$ either contains an arc $xy$ with $a \prec b \preccurlyeq x \prec c \prec d \preccurlyeq y$ or an arc $xy$ with $a \preccurlyeq x \prec b \prec c \preccurlyeq y \prec d$.
\end{lemma}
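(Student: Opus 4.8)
The plan is to reformulate both alternatives uniformly and then locate the required arc inside a single, well-chosen triangle of $T$. Partition the vertex set $[m]$ into the four cyclic \emph{sectors} $P = [a,b)$, $Q = [b,c)$, $R = [c,d)$, $S = [d,a)$, which contain $a, b, c, d$ respectively. Unwinding Definition~\ref{def:mt}, an arc of $T$ witnesses the first alternative (the ``$+1$'' case) exactly when it joins a vertex of $Q$ to a vertex of $S$, and it witnesses the second (the ``$-1$'' case) exactly when it joins a vertex of $P$ to a vertex of $R$. Thus the statement to prove is precisely that $T$ contains an arc joining two \emph{opposite} sectors. Since $a \prec b \prec c$ and $b \prec c \prec d$, both $ac$ and $bd$ are genuine diagonals of the $m$-gon. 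If either lies in $T$ we are immediately done, as $ac$ joins $P$ to $R$ and $bd$ joins $Q$ to $S$; so I would assume from now on that neither $ac$ nor $bd$ belongs to $T$.

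The key idea is to exploit the crossing point of these two diagonals. Realise the $m$-gon by points in convex position with the given cyclic order and in general position, so that no three diagonals are concurrent; as the claim depends only on the cyclic order of the vertices and on $T$ as a set of arcs, this is no loss of generality. The diagonals $ac$ and $bd$ then meet in a single interior point $z$, and since they do not lie in $T$, general position guarantees that $z$ lies on no arc of $T$. Hence $z$ is interior to a unique triangle $\Delta^* = v_1 v_2 v_3$ of $T$, and I claim that one of its three edges is the arc we seek.

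Suppose not, so that no edge of $\Delta^*$ joins opposite sectors; equivalently, no two of $v_1, v_2, v_3$ lie in opposite sectors. A subset of the cyclic $4$-chain $P, Q, R, S$ that avoids both opposite pairs $\{P, R\}$ and $\{Q, S\}$ consists of at most two \emph{consecutive} sectors, so all three vertices of $\Delta^*$ lie in one of the contiguous arcs $[a,c)$, $[b,d)$, $[c,a)$, $[d,b)$. Each of these arcs lies in a closed half-plane bounded by $ac$ or by $bd$, with only its initial vertex on the bounding chord. Consequently $\Delta^*$ lies in that closed half-plane, and since at most one vertex of $\Delta^*$ meets the bounding line, $\mathrm{int}(\Delta^*)$ lies strictly on one side of it. But that chord passes through $z \in \mathrm{int}(\Delta^*)$, a contradiction. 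Hence some edge of $\Delta^*$ joins opposite sectors, and this edge is the desired arc.

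The part needing the most care is the reduction to general position, together with the observation that a witnessing edge is automatically a diagonal rather than a boundary edge: two adjacent polygon vertices always lie in the same sector or in consecutive sectors, so a boundary edge can never join opposite sectors, and the edge produced above is therefore a genuine arc of $T$. The remaining steps --- unwinding the definition into the sector condition, the elementary pigeonhole on the four sectors, and the convexity argument placing $\Delta^*$ on one side of a chord through $z$ --- are routine.
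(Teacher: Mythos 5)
Your proof is correct, but it takes a genuinely different route from the paper's. Both arguments begin by recasting the two alternatives as the existence of an arc of $T$ joining ``opposite'' cyclic sectors among $[a,b)$, $[b,c)$, $[c,d)$, $[d,a)$. The paper stays purely combinatorial: it first reduces to the case where $T$ has no arc with both endpoints in a single sector (by restricting to a sub-polygon), then shows each sector contains a distinguished vertex $x_{ab}, x_{bc}, x_{cd}, x_{da}$ joined by arcs of $T$ to the neighbouring sectors, and concludes that these four vertices span a quadrilateral of $T$ whose diagonal is the required arc. You instead pass to a geometric realisation in general convex position, take the crossing point $z$ of the diagonals $ac$ and $bd$ (having disposed of the case where one of them already lies in $T$), locate the unique triangle of $T$ whose interior contains $z$, and show by a pigeonhole-plus-convexity argument that one of its edges must join opposite sectors; your closing observation that such an edge cannot be a boundary edge is exactly the detail needed to make this work. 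The reduction to a geometric model is harmless because the statement and the notion of triangulation depend only on the cyclic order. Your argument is shorter and its key step is a single clean convexity contradiction, at the cost of leaving the combinatorial category; the paper's argument avoids geometry entirely and yields slightly more structure (a quadrilateral of $T$ meeting all four sectors), though as written it leaves several verifications to the reader that your proof makes explicit.
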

\begin{proof}
If the triangulation $T$ contains arcs $xy$ with $a \preccurlyeq x \prec y \prec b$, then $T$ restricts to a triangulation of the polygon with vertices $V = [m] \setminus \{v \in [m] \st x \prec v \prec y\}$, where we also have $V \supseteq \{a, b, c, d\}$.
Hence, we can assume that there are no such arcs $xy$ of $T$, and likewise assume that $T$ contains no arcs $xy$ with $b \preccurlyeq x \prec y \prec c$, $c \preccurlyeq x \prec y \prec d$, or $d \preccurlyeq x \prec y \prec a$.
Thus, if $x$ is a vertex with $a \preccurlyeq x \prec b$, then there must be an arc (or boundary edge) $xy$ of $T$ with either $b \preccurlyeq y \prec c$, $c \preccurlyeq y \prec d$, or $d \preccurlyeq y \prec a$.
If the middle case holds, then we are done, so we can assume ($\ast$) that the first or the third case holds for all $x$ with $a \preccurlyeq x \prec b$.
If we have $a \preccurlyeq x \prec x' \prec b$, with $x'y'$ an arc of~$T$, then we cannot have $b \preccurlyeq y \prec c$ and $d \preccurlyeq y' \prec a$, since then $xy$ and $x'y'$ cross.
Thus, let $\overrightarrow{x}$ be the maximal vertex with $a \preccurlyeq \overrightarrow{x} \prec b$ such that there is an arc $\overrightarrow{x}\overrightarrow{y}$ of $T$ with $d \preccurlyeq \overrightarrow{y} \prec a$.
Similarly, let $\overleftarrow{x}$ be the minimal vertex with $a \preccurlyeq \overleftarrow{x} \prec b$ such that there is an arc $\overleftarrow{x}\overleftarrow{y}$ of $T$ with $b \preccurlyeq \overleftarrow{y} \prec c$.
By assumption~($\ast$), there are no vertices $x$ with $\overrightarrow{x} \prec x \prec \overleftarrow{x}$.
In fact, one can see that $\overrightarrow{x} = \overleftarrow{x}$, since one can otherwise consider the triangle with the side $\overrightarrow{x}\overleftarrow{x}$, which must either contain an arc $\overrightarrow{x}y$ with $b \preccurlyeq y \prec c$ or an arc $\overleftarrow{x}y$ with $d \preccurlyeq y \prec a$.

We thus let $x_{ab} = \overrightarrow{x} = \overleftarrow{x}$.
One can deduce the existence of analogous vertices $x_{bc}$, $x_{cd}$, and $x_{da}$ with $b \preccurlyeq x_{bc} \prec c$, $c \preccurlyeq x_{cd} \prec d$, and $d \preccurlyeq x_{da} \prec a$.
We have that there are arcs $\overleftarrow{y}x_{ab}$ and $x_{ab}\overrightarrow{y}$ with $d \preccurlyeq \overleftarrow{y} \prec a$ and $b \preccurlyeq \overrightarrow{y} \prec a$, and analogous arcs for each of $x_{bc}$, $x_{cd}$, and $x_{da}$.
If none of these arcs are to intersect, we must have that they form a quadrilateral with vertices $x_{ab}$, $x_{bc}$, $x_{cd}$, and $x_{da}$, so that in particular $\overleftarrow{y} = x_{da}$ and $\overrightarrow{y} = x_{bc}$.
The diagonal $xy$ of this quadrilateral which is an arc of $T$ then gives us the arc in the statement of the lemma.
\end{proof}

We now need to show that $\mathcal{M}_{T}$ is a well-defined oriented matroid.
We will use \cite[Corollary~3.6.3]{blswz}, which says that $\mathcal{M}_{T}$ is well-defined if $\chi_{T}$ is alternating, the underlying (unoriented) matroid of $\mathcal{M}_{T}$ is well-defined, and every six-element restriction of $\mathcal{M}_{T}$ is realisable.
The first property we have required to be true by construction.
The second property is immediate, since the underlying matroid of $\mathcal{M}_{T}$ is the uniform matroid.
To show the final property, we need to understand restrictions of $\mathcal{M}_{T}$.

\begin{definition}
Let $T$ be a triangulation of $[m]$ and let $V \subseteq [m]$ be a subset with $|V| \geqslant 4$.
We say that an arc $xy$ of $T$ \emph{separates} $V$ if $V = V_{x} \sqcup V_{y}$ with $V_{x}$ and $V_{y}$ non-empty, with $V_{x} \preccurlyeq x \prec V_{y} \preccurlyeq y$.
By this, we of course mean that for every $a \in V_{x}$ and $c \in V_{y}$, we have $a \preccurlyeq x \prec c \preccurlyeq y$.

Given an arc $xy$ which separates $V$, set $x^{V} := \max_{\circlearrowleft} V_{x}$ and $y^{V} := \max_{\circlearrowleft} V_{y}$, where $\max_{\circlearrowleft} V_{x}$ is defined to be the unique element of $V_{x}$ such that $v \preccurlyeq \max_{\circlearrowleft} V_{x} \preccurlyeq x$ for all elements $v \in V_{x}$, and $\max_{\circlearrowleft} V_{y}$ is defined similarly.
Define $T_{V}$ to be the triangulation of the polygon with vertex set $V$ whose set of arcs is
\begin{align*}
&\{x^{V}y^{V} \st xy \text{ an arc of $T$ which separates }V\} \\ &\qquad \setminus \{\text{boundary arcs of the polygon with vertex set }V\}.
\end{align*}
\end{definition}

\begin{lemma}
Given a triangulation $T$ of the $m$-gon and $V \subseteq [m]$ with $|V| \geqslant 4$, we have that $T_{V}$ is a triangulation of the convex polygon with vertices $V$.
\end{lemma}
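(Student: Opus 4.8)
The plan is to verify the two defining properties of a triangulation of the polygon $P_{V}$ on the vertex set $V$ separately: that the arcs of $T_{V}$ pairwise do not cross, and that $T_{V}$ is maximal among non-crossing families (equivalently, that every region it cuts out is a triangle). The key simplification I would make first is to note that $x^{V}$ and $y^{V}$ do not really depend on the separating arc $xy$: since $V_{x} = V \cap (y, x]$ is non-empty, the element $x^{V} = \max_{\circlearrowright} V_{x}$ is simply the last element of $V$ weakly preceding $x$ in the cyclic order, and likewise for $y^{V}$. Writing $\lfloor z \rfloor_{V}$ for this ``floor'' of $z \in [m]$ in $V$, we thus have $T_{V} = \{\lfloor x \rfloor_{V}\lfloor y \rfloor_{V} \st xy \text{ an arc of } T \text{ separating } V\}$, where $\lfloor x \rfloor_{V}\lfloor y \rfloor_{V}$ denotes the chord between these vertices, and $xy$ separates $V$ precisely when both $(y, x]$ and $(x, y]$ meet $V$.

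For the non-crossing property, suppose for contradiction that two distinct arcs $\lfloor x \rfloor_{V}\lfloor y \rfloor_{V}$ and $\lfloor x' \rfloor_{V}\lfloor y' \rfloor_{V}$ of $T_{V}$ cross, so that after relabelling their four endpoints interleave as $\lfloor x \rfloor_{V} \prec \lfloor x' \rfloor_{V} \prec \lfloor y \rfloor_{V} \prec \lfloor y' \rfloor_{V}$. The crucial observation is that if $a, b \in [m]$ satisfy $\lfloor a \rfloor_{V} \prec \lfloor b \rfloor_{V}$ as distinct elements of $V$, then $a \prec b$: indeed $a$ strictly precedes the successor of $\lfloor a \rfloor_{V}$ in $V$, which weakly precedes $\lfloor b \rfloor_{V} \preccurlyeq b$. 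Applying this to each of the four cyclically consecutive pairs of the interleaving yields $x \prec x' \prec y \prec y'$ cyclically, whence the arcs $xy$ and $x'y'$ of $T$ cross. This contradicts $T$ being a triangulation, so $T_{V}$ is non-crossing.

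The harder half is maximality, and this is where I expect the main obstacle to lie. The cleanest route is to show that any diagonal $\{v, w\}$ of $P_{V}$ not already in $T_{V}$ must cross an arc of $T_{V}$. If the chord $vw$ is itself an arc of $T$, then since $v, w \in V$ are non-adjacent in $V$ it separates $V$ and projects to itself, so it already lies in $T_{V}$; otherwise $vw$ is crossed by some arc $xy$ of $T$, and that arc separates $v$ from $w$, hence separates $V$, giving $\lfloor x \rfloor_{V}\lfloor y \rfloor_{V} \in T_{V}$ with endpoints weakly interleaving $v$ and $w$. The difficulty is that this projected arc may \emph{share} an endpoint with $\{v, w\}$, precisely when $\lfloor x \rfloor_{V} = v$ or $\lfloor y \rfloor_{V} = w$, and so need not cross it. Resolving these degenerate cases is the crux. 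I would phrase the argument in the equivalent form ``if a region cut out by $T_{V}$ has at least four vertices, derive a contradiction by testing the short diagonal that skips one of its vertices'', and handle the shared-endpoint possibility either by choosing the arc of $T$ incident to the triangle at the $v$-corner of that region, or, failing a uniform choice, by inducting on the number of vertices lying strictly between $v$ and $w$ on the relevant side.

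One subtlety I would address explicitly is the status of boundary edges: because ``arc'' in this section denotes an arbitrary chord, a separating arc of $T$ lying in the gap over a single edge of $P_{V}$ projects to that boundary edge, so $T_{V}$ may a priori contain boundary edges alongside genuine diagonals. These do not interfere with either step above, since boundary edges lie on the boundary of $P_{V}$ and are crossed by nothing; it therefore suffices to prove that the diagonals among $T_{V}$ form a maximal non-crossing family, which by the two steps above triangulates $P_{V}$.
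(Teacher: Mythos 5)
Your simplification of $x^{V}$ to a ``floor'' $\lfloor x \rfloor_{V}$ depending only on $x$ is correct, and your non-crossing argument is complete and sound: the four lifts lie in four pairwise disjoint half-open cyclic intervals arranged in the same cyclic order as their floors, so interleaving of the projected arcs forces interleaving of $xy$ and $x'y'$. The gap is the maximality half, which you flag as ``the crux'' and then do not actually prove. Offering to handle the shared-endpoint degeneration ``either by choosing the arc of $T$ incident to the triangle at the $v$-corner\dots or, failing a uniform choice, by inducting\dots'' is a menu of candidate strategies, not an argument; as written you have only shown that $T_{V}$ is a non-crossing family. The degenerate case is genuinely non-vacuous, and worse, your dichotomy (``$vw$ is an arc of $T$, or $vw$ is crossed by an arc of $T$'') is not the right partition of cases: a diagonal $vw$ of $P_{V}$ can lie in $T_{V}$ as the projection of an arc of $T$ that neither equals $vw$ nor crosses it. For example, take $m = 6$, $T = \{24, 26, 46\}$, $V = \{1,3,4,6\}$, and the diagonal $14$ of $P_{V}$: the unique arc of $T$ crossing $14$ is $26$, which projects to the boundary edge $16$ and so certifies nothing, yet $14 \in T_{V}$ because $24 \mapsto 14$. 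Your recipe gets stuck on exactly this kind of configuration, so the missing step is not a routine tidy-up.

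For comparison, the paper sidesteps the whole issue by induction on $|[m] \setminus V|$, deleting one vertex at a time: for $V = [m] \setminus \{m\}$ one has explicitly $T_{V} = \{xy \in T \st x, y \in [m-1]\} \cup \{x(m-1) \st xm \in T\}$, which is the standard vertex-deletion (edge-contraction) operation on polygon triangulations, and the fact that this yields a triangulation is quoted from the literature or seen by inspecting the link of the deleted vertex. If you want to keep your direct approach, you must either carry out the induction you allude to, or replace the maximality step by something checkable, e.g.\ showing that $T_{V}$ contains exactly $|V| - 3$ diagonals of $P_{V}$; but the latter requires controlling the fibres of the projection $xy \mapsto \lfloor x \rfloor_{V}\lfloor y \rfloor_{V}$, which is not obviously easier than the one-vertex-at-a-time reduction.
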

\begin{proof}
Note first that if $|V| = m$, then the statement is immediate, since in that case all arcs of $T$ separate $V$.
It suffices then to consider the case where $|V| = m - 1$, since one can use induction to obtain the statement for smaller $V$.
We can then rotate to assume that $V = [m] \setminus \{m\}$.
In this case, $T_{V}$ has arcs 
\begin{align*}
&(\{xy \text{ an arc of } T \st x, y \in [m - 1]\} \cup \{x(m - 1) \st xm \text{ an arc of } T\}) \\ &\qquad \setminus \{\text{boundary arcs of the polygon with vertices in }V\}.
\end{align*}
The fact that this is a triangulation of the convex polygon with vertices~$V$ then follows from \cite[Theorem~4.2(iii)]{rambau}, but is also not hard to see by inspection.
\end{proof}

\begin{proposition}\label{prop:restriction}
Let $T$ be a triangulation of the $m$-gon and $V \subseteq [m]$ with $|V| \geqslant 4$.
Then $\chi_{T}|_{V^{4}} = \chi_{T_{V}}$, and so $\mathcal{M}_{T}(V) = \mathcal{M}_{T_{V}}$.
\end{proposition}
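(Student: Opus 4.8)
The plan is to reduce the proposition to the single combinatorial identity that $\chi_{T}$ and $\chi_{T_{V}}$ agree on each cyclically ordered $4$-subset $a \prec b \prec c \prec d$ of $V$; the equality $\chi_{T}|_{V^{4}} = \chi_{T_{V}}$ on all of $V^{4}$ then follows at once, since both maps are alternating by construction. Fixing such a tuple, the two lemmas preceding this proposition --- applied once to $T$ and once to $T_{V}$, both legitimate since $\{a,b,c,d\} \subseteq V$ --- tell me that each of $\chi_{T}(a,b,c,d)$ and $\chi_{T_{V}}(a,b,c,d)$ equals $+1$ or $-1$ according to which of the two arc conditions holds, and that exactly one of them holds. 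It therefore suffices to prove the equivalence
\[
\big(\exists \text{ arc } xy \text{ of } T \st a \prec b \preccurlyeq x \prec c \prec d \preccurlyeq y\big)
\iff
\big(\exists \text{ arc } x'y' \text{ of } T_{V} \st a \prec b \preccurlyeq x' \prec c \prec d \preccurlyeq y'\big),
\]
since the exclusivity of the two cases then forces the $-1$ cases to agree as well.

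For the forward implication I would start from an arc $xy$ of $T$ with $a \prec b \preccurlyeq x \prec c \prec d \preccurlyeq y$. Since $a, b \preccurlyeq x$ while $x \prec c \prec d \preccurlyeq y$, the arc $xy$ separates $V$, with $\{a,b\} \subseteq V_{x}$ and $\{c,d\} \subseteq V_{y}$, both non-empty; hence $x^{V}y^{V}$ is an arc of $T_{V}$. Now $b \in V_{x}$ and $x^{V} = \max_{\circlearrowright} V_{x}$ give $b \preccurlyeq x^{V}$, while $x^{V} \preccurlyeq x \prec c$ gives $x^{V} \prec c$; dually $d \in V_{y}$ gives $d \preccurlyeq y^{V}$, while $y^{V} \preccurlyeq y \prec a$ gives $y^{V} \prec a$. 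Reading these relations around the circle yields $a \prec b \preccurlyeq x^{V} \prec c \prec d \preccurlyeq y^{V}$, so $x^{V}y^{V}$ is the required arc of $T_{V}$.

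For the reverse implication I would take an arc of $T_{V}$ of the required form; by definition it is $x^{V}y^{V}$ for some arc $xy$ of $T$ separating $V$, with $a \prec b \preccurlyeq x^{V} \prec c \prec d \preccurlyeq y^{V}$. Here $b \preccurlyeq x^{V} \preccurlyeq x$ already gives $b \preccurlyeq x$. The essential step is to locate $c$ and $d$: since $x^{V} = \max_{\circlearrowright} V_{x}$ and $x^{V} \prec c \prec d$, neither $c$ nor $d$ can lie in $V_{x}$, so both lie in $V_{y}$, which forces $x \prec c$ and $d \preccurlyeq y$. Hence $a \prec b \preccurlyeq x \prec c \prec d \preccurlyeq y$, so $xy$ is the required arc of $T$, completing the proof that $\chi_{T}|_{V^{4}} = \chi_{T_{V}}$. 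Finally, as $\chi_{T_{V}}$ is not identically zero, $V$ contains a basis and so $\mathcal{M}_{T}(V)$ has rank $4$ (it cannot exceed the rank of $\mathcal{M}_{T}$); Proposition~\ref{prop:matroid_restriction} then identifies its chirotope as $\chi_{T}|_{V} = \chi_{T_{V}}$, and since an oriented matroid is determined by its chirotope we conclude $\mathcal{M}_{T}(V) = \mathcal{M}_{T_{V}}$.

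The main obstacle is the bookkeeping of the cyclic order in the reverse implication: one must deduce from $x^{V} \prec c \prec d$ together with the maximality defining $x^{V}$ that $c$ and $d$ are separated from $x$ by the arc $xy$, that is, that they land in $V_{y}$ rather than $V_{x}$. Once this placement is secured, the remaining containments are a routine chain of cyclic inequalities read off around the polygon.
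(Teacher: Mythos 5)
Your proposal is correct and follows essentially the same route as the paper's proof: both reduce to showing, for a cyclically ordered $4$-subset of $V$, that the arc condition $a \prec b \preccurlyeq x \prec c \prec d \preccurlyeq y$ for some arc $xy$ of $T$ is equivalent to the corresponding condition for the arc $x^{V}y^{V}$ of $T_{V}$, using that exactly one of the two sign cases occurs. Your write-up merely supplies the cyclic-order bookkeeping (in particular the placement of $c$ and $d$ in $V_{y}$ via the maximality of $x^{V}$) that the paper leaves implicit.
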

\begin{proof}
Let $\{a \prec b \prec c \prec d\} \subseteq V$.
Then $\chi_{T}(a, b, c, d) = + 1$ if and only if there is an arc $xy$ of $T$ with $a \prec b \preccurlyeq x \prec c \prec d \preccurlyeq y$.
Note that $xy$ therefore separates~$V$.
This is then the case if and only if $a \prec b \preccurlyeq x^V \prec c \prec d \preccurlyeq y^V$.
Finally, this is the case if and only if there exists an arc $x^{V}y^{V}$ of $T_{V}$ such that this holds, which is the condition for $\chi_{T_{V}}(a, b, c, d) = + 1$.
\end{proof}

Hence, using \cite[Corollary~3.6.3]{blswz}, in order to show that $\chi_{T}$ is a well-defined chirotope, it suffices to show that $\chi_{T}$ is realisable for $T$ a triangulation of a hexagon.

\begin{proposition}\label{prop:hex_realisable}
If $T$ is a hexagon triangulation, then $\chi_{T}$ is a realisable chirotope.
\end{proposition}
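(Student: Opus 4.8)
The plan is to run the reduction we have already set up: by \cite[Corollary~3.6.3]{blswz} it suffices to realise $\chi_{T}$ when $m = 6$, and there are only finitely many triangulations of the hexagon. The first step is to cut down the number of cases using the dihedral symmetry of the hexagon. A rotation $x \mapsto x + 1$ of $[6]$ carries $T$ to a triangulation $T + 1$, and one checks directly from Definition~\ref{def:mt} that $\chi_{T + 1}(a, b, c, d) = \chi_{T}(a - 1, b - 1, c - 1, d - 1)$, since rotation preserves both the cyclic order $\prec$ and the separating-arc conditions defining $\chi_{T}$; a reflection behaves the same way except that it reverses $\prec$ and hence negates $\chi_{T}$ globally. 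Since realisability of a chirotope is preserved both under relabelling the ground set (permuting the vectors $\rho(x)$) and under global negation (reflecting the configuration), it suffices to realise one representative from each orbit of the dihedral group $D_{6}$ acting on hexagon triangulations. There are exactly three such orbits: the six fans and the six ``snakes'' (both have path-shaped dual trees, distinguished by whether a single vertex lies on all three diagonals, e.g.\ $\{13, 14, 15\}$ versus $\{13, 14, 46\}$), and the two central-triangle triangulations, whose dual tree is a star (e.g.\ $\{13, 35, 51\}$).

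For each of the three representatives I would produce an explicit $\rho \colon [6] \to \mathbb{R}^{4}$ and check that $\sgn \dtmnt(\rho(a), \rho(b), \rho(c), \rho(d)) = \chi_{T}(a, b, c, d)$ on all $\binom{6}{4} = 15$ four-subsets. A convenient family is obtained by placing the six vertices at points $q_{1}, \dots, q_{6}$ in convex position in a plane and lifting them to heights $z_{1}, \dots, z_{6}$, setting $\rho(i) = (q_{i}, z_{i}, 1)$. For a four-subset $\{a \prec b \prec c \prec d\}$ the orientation is then, up to a global sign, that of $z_{b} A_{acd} + z_{d} A_{abc} - z_{a} A_{bcd} - z_{c} A_{abd}$, where $A_{ijk} > 0$ is the planar area of the triangle $q_{i} q_{j} q_{k}$; this sign records which diagonal of the quadrilateral $abcd$ appears in the lifted lower hull. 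By Proposition~\ref{prop:restriction}, which identifies $\chi_{T}|_{V}$ with the diagonal used by the induced quadrilateral triangulation $T_{V}$, the verification reduces to matching, four-subset by four-subset, the diagonal selected by the heights with the one prescribed by $T_{V}$.

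The main obstacle is precisely this matching. It is \emph{not} enough to choose heights whose lower hull realises $T$ as a triangulation of the whole hexagon: the diagonal $T_{V}$ assigns to a four-subset $\{a, b, c, d\}$ is extra combinatorial data, controlled by the separating arc of Definition~\ref{def:mt}, and a generic lifting realising $T$ at the hexagon level can assign the opposite diagonal to such a sub-quadrilateral — for instance to $\{2, 3, 5, 6\}$ in the fan $\{13, 14, 15\}$, where none of the four points is the apex. One must therefore choose the $z_{i}$ so that all fifteen sign conditions hold simultaneously, which is a linear-feasibility problem in the heights for each representative. Solving these three small systems, together with the sign bookkeeping in the equivariance step, is where the work lies, but it is entirely finite.

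Finally, I would record the conceptual reason to expect success, which also yields an alternative route avoiding explicit coordinates. Since $\chi_{T}$ is a uniform sign map of rank $4$ on $r + 2 = 6$ elements, its dual data has rank $2$, and every rank-$2$ oriented matroid is realisable; as duality preserves both the chirotope axioms of Definition~\ref{def:chiro} and realisability, realising $\chi_{T}$ is equivalent to checking that $\chi_{T}$ is a chirotope at all. Thus one may instead verify that the rank-$2$ dual sign data is a valid chirotope — equivalently, exhibit the circular order of six planar vectors it encodes — and dualise the resulting plane configuration back to a realisation of $\chi_{T}$.
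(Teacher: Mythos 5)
Your overall strategy coincides with the paper's: after the reduction to hexagons, cut the cases down by symmetry and then realise each representative by an explicit vector configuration, specifically a lifting of six points in convex position in the plane. (The paper's three matrices are exactly such liftings: the first three rows place the six points at $(1, i, i^{2})$ for $i = 0, \dots, 5$, and the fourth row is the height vector.) Your symmetry reduction is sound and in fact slightly sharper than the paper's: the paper reduces by rotation alone to four representatives (the fan plus the three triangulations of Figure~\ref{fig:hex_triangs}, two of which are reflections of one another), whereas your observation that a reflection reverses the cyclic order --- hence swaps the two diagonals in the defining condition of Definition~\ref{def:mt}, while reversal of a $4$-tuple is an even permutation --- correctly shows that reflection negates $\chi_{T}$ globally and cuts the list to three orbits. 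You also correctly isolate the real difficulty, namely that the heights must reproduce $\chi_{T}$ on all fifteen $4$-subsets (equivalently, by Proposition~\ref{prop:restriction}, the diagonal of each induced quadrilateral triangulation $T_{V}$), and not merely induce $T$ as the lower hull of the lifting.

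The gap is that this decisive step is announced but not carried out. The entire content of the paper's proof is the exhibition of heights satisfying all fifteen sign conditions for each representative, together with the table of minors verifying them; your proposal stops at ``this is a linear-feasibility problem \dots\ which is where the work lies.'' Feasibility of those three systems is precisely what has to be demonstrated, and nothing earlier in the argument guarantees it. The duality remark in your final paragraph does not close the gap: it is correct that for a rank-$4$ sign map on six elements realisability is equivalent to the chirotope axioms (corank-$2$ oriented matroids being realisable), but since chirotope-ness of $\chi_{T}$ is exactly what Theorem~\ref{thm:matroid} is trying to establish, this only trades one unperformed finite verification (finding the heights) for another (checking the three-term relations of Definition~\ref{def:chiro}, or exhibiting the circular order of the six dual vectors). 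Either computation would complete the proof; as written, neither is done.
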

\begin{proof}
If $T$ is the triangulation $\{26, 36, 46\}$, then one can check that $\chi_{T}$ is positive on all 4-subsets of $[6]$ ordered in the usual way.
Hence, $\chi_{T}$ is realised by the three-dimensional cyclic polytope with six vertices from Example~\ref{ex:cyclic_polytope}.
By rotating, we have that $\chi_{T}$ is realisable for any triangulation $T$ of the form $\{(a - 4)a, (a - 3)a, (a - 2)a\}$.

Up to rotation, there are only three more hexagon triangulations we need to check, namely $T_{\Delta}$, $T_{\scriptsize\reflectbox{N}}$, and $T_{N}$ as shown in Figure~\ref{fig:hex_triangs}.
We claim that $\mathcal{M}_{T_{\Delta}}$, $\mathcal{M}_{T_{\tiny\reflectbox{N}}}$, and $\mathcal{M}_{T_{N}}$ are realised by the respective matrices \[
M_{T_{\Delta}} =
\left(
\begin{smallmatrix}
1 & 1 & 1 & 1 & 1 & 1 \\
0 & 1 & 2 & 3 & 4 & 5 \\
0 & 1 & 4 & 9 & 16 & 25 \\
3 & 1 & -1 & 1 & 4 & 12 \\
\end{smallmatrix}
\right),
\,
M_{T_{\tiny\reflectbox{N}}} = 
\left(
\begin{smallmatrix}
1 & 1 & 1 & 1 & 1 & 1 \\
0 & 1 & 2 & 3 & 4 & 5 \\
0 & 1 & 4 & 9 & 16 & 25 \\
10 & 1 & -1 & 1 & 11 & 32
\end{smallmatrix}
\right),
\,
M_{T_{N}} = 
\left(
\begin{smallmatrix}
1 & 1 & 1 & 1 & 1 & 1 \\
0 & 1 & 2 & 3 & 4 & 5 \\
0 & 1 & 4 & 9 & 16 & 25 \\
4 & 1 & -1 & 1 & 11 & 26
\end{smallmatrix}
\right).
\]
Indeed, the maximal minors of these matrices are as shown in Table~\ref{tab:minors}.
We leave it to the reader to check that the signs of these minors agree with Definition~\ref{def:mt}.
\end{proof}

\begin{figure}
\[
\begin{tikzpicture}[scale=1.3]

\begin{scope}[shift={(-3.5,0)}]

\coordinate(1) at (120:1);
\coordinate(2) at (180:1);
\coordinate(3) at (240:1);
\coordinate(4) at (300:1);
\coordinate(5) at (360:1);
\coordinate(6) at (60:1);

\draw (1) -- (2) -- (3) -- (4) -- (5) -- (6) -- (1);

\node at (120:1.2) {$1$};
\node at (180:1.2) {$2$};
\node at (240:1.2) {$3$};
\node at (300:1.2) {$4$};
\node at (360:1.2) {$5$};
\node at (60:1.2) {$6$};

\draw (2) -- (6);
\draw (2) -- (4);
\draw (4) -- (6);

\node at (270:1.5) {$T_{\Delta}$};

\end{scope}

\begin{scope}[shift={(0,0)}]

\coordinate(1) at (120:1);
\coordinate(2) at (180:1);
\coordinate(3) at (240:1);
\coordinate(4) at (300:1);
\coordinate(5) at (360:1);
\coordinate(6) at (60:1);

\draw (1) -- (2) -- (3) -- (4) -- (5) -- (6) -- (1);

\node at (120:1.2) {$1$};
\node at (180:1.2) {$2$};
\node at (240:1.2) {$3$};
\node at (300:1.2) {$4$};
\node at (360:1.2) {$5$};
\node at (60:1.2) {$6$};

\draw (3) -- (6);
\draw (4) -- (6);
\draw (1) -- (3);

\node at (270:1.5) {$T_{\scriptsize\reflectbox{N}}$};

\end{scope}

\begin{scope}[shift={(3.5,0)}]

\coordinate(1) at (120:1);
\coordinate(2) at (180:1);
\coordinate(3) at (240:1);
\coordinate(4) at (300:1);
\coordinate(5) at (360:1);
\coordinate(6) at (60:1);

\draw (1) -- (2) -- (3) -- (4) -- (5) -- (6) -- (1);

\node at (120:1.2) {$1$};
\node at (180:1.2) {$2$};
\node at (240:1.2) {$3$};
\node at (300:1.2) {$4$};
\node at (360:1.2) {$5$};
\node at (60:1.2) {$6$};

\draw (3) -- (6);
\draw (2) -- (6);
\draw (3) -- (5);

\node at (270:1.5) {$T_{N}$};

\end{scope}

\end{tikzpicture}
\]
\caption{Three hexagon triangulations $T$ for which it suffices to check that $\mathcal{M}_{T}$ is realisable}\label{fig:hex_triangs}
\end{figure}

\begin{table}
\[
\begin{tabular}{c|c|c|c}
Column tuple & Minor of $M_{T_{\Delta}}$ & Minor of $M_{T_{\tiny\reflectbox{N}}}$ & Minor of $M_{T_{N}}$ \\
\hline
1234 & 8 & $-6$ & 6 \\
1235 & 18 & $-10$ & 26 \\
1236 & 38 & $-6$ & 54 \\
1245 & 6 & 6 & 42 \\
1246 & 34 & 42 & 102 \\
1256 & 48 & 64 & 64 \\
1345 & $-10$ & 18 & 30 \\
1346 & $-6$ & 72 & 72 \\
1356 & 34 & 102 & 42 \\
1456 & 38 & 54 & $-6$ \\
2345 & $-6$ & 8 & 8 \\
2346 & $-10$ & 30 & 18 \\
2356 & 6 & 42 & 6 \\
2456 & 18 & 26 & $-10$ \\
3456 & 8 & 6 & $-6$ \\
\end{tabular}
\]
\caption{Minors of the matrices $M_{T_{\Delta}}$, $M_{T_{\tiny\reflectbox{N}}}$, and $M_{T_{N}}$}\label{tab:minors}
\end{table}

Note that Table~\ref{tab:minors} gives examples of the the oriented matroids defined in Definition~\ref{def:mt}, via taking the signs of the minors.
We then obtain the following result from Proposition~\ref{prop:hex_realisable} by applying \cite[Corollary~3.6.3]{blswz}.

\begin{theorem}\label{thm:matroid}
We have that $\chi_{T}$ is a well-defined chirotope, and so $\mathcal{M}_{T}$ is a well-defined oriented matroid.
\end{theorem}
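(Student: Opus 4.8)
The plan is to deduce the theorem by feeding the two reduction results already in hand, Proposition~\ref{prop:restriction} and Proposition~\ref{prop:hex_realisable}, into the local realisability criterion \cite[Corollary~3.6.3]{blswz}. First I would record that $\chi_{T}$ already satisfies two of the three conditions of Definition~\ref{def:chiro}. It is alternating by the very way it is extended off cyclically ordered tuples in Definition~\ref{def:mt}, so the condition of Definition~\ref{def:chiro}\eqref{op:chiro:alt} holds; and the two preceding well-definedness lemmas show that it takes the value $+1$ or $-1$ on every $4$-subset with distinct entries, so in particular it is not identically zero. Thus the only axiom in doubt is the chirotope exchange axiom (the third condition of Definition~\ref{def:chiro}), which is precisely the condition that the corollary is designed to supply.

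The corollary reduces verifying this remaining axiom in rank $4$ to checking that every restriction of $\chi_{T}$ to a six-element subset $V \subseteq [m]$ is realisable. Here I would invoke Proposition~\ref{prop:restriction}, which identifies $\chi_{T}|_{V^{4}}$ with $\chi_{T_{V}}$, the chirotope attached to the induced triangulation $T_{V}$ of the convex hexagon on $V$. Since $T_{V}$ is a hexagon triangulation, Proposition~\ref{prop:hex_realisable} furnishes an explicit realisation of $\chi_{T_{V}}$, and hence of the restriction $\chi_{T}|_{V^{4}}$. As $V$ ranges over all six-subsets of $[m]$ this shows that every six-element restriction is realisable, so the corollary applies and $\chi_{T}$ is a genuine chirotope; since an oriented matroid is determined by specifying its chirotope, $\mathcal{M}_{T}$ is then a well-defined oriented matroid. (For $m \leqslant 5$ there are no six-element subsets, but then there are too few elements to form a nontrivial instance of the exchange axiom, so it holds vacuously.)

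I expect the genuine content to sit not in this final assembly but in the inputs it consumes, above all the hexagon realisability of Proposition~\ref{prop:hex_realisable}, where the finite case analysis and the explicit realising matrices do the real work. Within the assembly itself the one point requiring care is the rank bookkeeping governed by Proposition~\ref{prop:matroid_restriction}: to conclude that the chirotope of the restriction is literally $\chi_{T}|_{V^{4}}$ one must know that $\mathcal{M}_{T}(V)$ still has rank $4$. This is automatic here, since $\chi_{T_{V}}$ is nowhere zero on the $4$-subsets of $V$, so the restriction is nondegenerate of full rank $4$ and its chirotope is exactly $\chi_{T}|_{V^{4}}$, as the application of the corollary requires.
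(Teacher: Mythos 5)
Your proposal is correct and follows essentially the same route as the paper: the paper likewise deduces Theorem~\ref{thm:matroid} by combining Proposition~\ref{prop:restriction} (identifying six-element restrictions with hexagon chirotopes $\chi_{T_V}$) and Proposition~\ref{prop:hex_realisable} with the local realisability criterion \cite[Corollary~3.6.3]{blswz}. Your added remarks on the alternating/nonvanishing axioms and the rank-four bookkeeping are sensible elaborations of points the paper leaves implicit.
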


It will be useful to describe the signed circuits of $\mathcal{M}_{T}$.

\begin{proposition}\label{prop:circuits}
Every signed circuit of $\mathcal{M}_{T}$ is of the form $C = (\{a, c, e\}, \{b, d\})$ or $C = (\{b, d\}, \{a, c, e\})$ for $a \prec b \prec c \prec d \prec e$.
\end{proposition}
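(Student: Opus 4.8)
The plan is to reduce the statement to a single explicit computation on a pentagon. First I would note that $\chi_{T}$ takes a value in $\{+1,-1\}$ on every cyclically ordered $4$-tuple (this is exactly what the two well-definedness lemmas after Definition~\ref{def:mt} establish), and hence, being alternating, is nowhere zero. Thus every $4$-subset of $[m]$ is a basis, so $\mathcal{M}_{T}$ is the uniform oriented matroid of rank $4$ on $[m]$, whose circuits are precisely the $5$-subsets. Every signed circuit therefore has support $\{a,b,c,d,e\}$ with $a \prec b \prec c \prec d \prec e$, and only the signs remain to be determined. By Proposition~\ref{prop:restriction} the restriction to $V = \{a,b,c,d,e\}$ equals $\mathcal{M}_{T_{V}}$ for a pentagon triangulation $T_{V}$, and by Proposition~\ref{prop:matroid_restriction} the circuit in question is, up to a global sign, the unique signed circuit of $\mathcal{M}_{T_{V}}$. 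So it suffices to compute this circuit for each triangulation of a pentagon.

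Every triangulation of a pentagon is a fan, and the five fans are the cyclic rotations of one another; since rotation relabels the ground set by $i \mapsto i+1$ and carries a circuit of the claimed form to another of the claimed form, it is enough to treat one, say $T' = \{13, 14\}$ on $\{1,2,3,4,5\}$. For this $T'$ I would evaluate the five face values $\chi_{T'}(v_{1}, \dots, \widehat{v_{i}}, \dots, v_{5})$ straight from Definition~\ref{def:mt}, obtaining the pattern $(+,-,-,-,-)$, and then recover the signed circuit from the standard expression of the circuit of a uniform oriented matroid through its chirotope, $C(v_{i}) = (-1)^{i}\chi_{T'}(v_{1}, \dots, \widehat{v_{i}}, \dots, v_{5})$ up to a global sign. (Alternatively one realises $T'$ as a restriction of a hexagon matrix from Proposition~\ref{prop:hex_realisable} and reads off the kernel of the resulting $4 \times 5$ matrix.) This gives $C = (\{1,2,4\}, \{3,5\})$ up to sign; its two-element part $\{3,5\}$ is cyclically non-adjacent, so with the cyclic labelling $a = 2, b = 3, c = 4, d = 5, e = 1$ the circuit is exactly of the form $(\{a,c,e\}, \{b,d\})$, as required.

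The structural reason that no other pattern can occur---and a second route to the result---is that in the realisations of Proposition~\ref{prop:hex_realisable} the first three coordinates follow the moment curve $(1, t, t^{2})$. Any circuit vector $\lambda$ on $\{v_{1}, \dots, v_{5}\}$ then satisfies $\sum_{j} \lambda_{j} t_{j}^{k} = 0$ for $k = 0, 1, 2$, and a Descartes-type argument forces $\lambda$ to have at least three sign changes: if it had at most two, a polynomial $p$ of degree $\leqslant 2$ vanishing once in each sign-change gap would make $\sum_{j} \lambda_{j} p(t_{j})$ of one strict sign, contradicting $\deg p \leqslant 2$. For a full-support sign vector on five cyclically ordered elements, having at least three sign changes is precisely equivalent to its positive and negative parts giving a $3$--$2$ split whose two-element block is cyclically non-adjacent, that is, to the claimed form.

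The step I expect to be the main obstacle is the interplay between the cyclic order $\prec$ that defines $\chi_{T}$ and the linear order needed for the chirotope--circuit formula: the claimed form is alternating only after a suitable cyclic rotation, rather than in the naive odd/even positions of the natural order, so the computation must track the circuit's two-element part and confirm its cyclic non-adjacency rather than expecting the split $(\{a,c,e\},\{b,d\})$ to occur for the labelling $a = \min V$. For the conceptual route there is the additional point of checking that every pentagon-triangulation matroid really carries a moment-curve realisation, which again reduces, via restriction, to the hexagon computations already carried out.
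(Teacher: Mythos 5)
Your proof is correct and follows essentially the same route as the paper: uniformity of $\chi_{T}$ gives that the circuits are exactly the $5$-subsets, restriction (Proposition~\ref{prop:restriction}) reduces the sign pattern to a pentagon triangulation $T_{V}$, and rotation reduces to a single fan. The only difference is the last step, where the paper identifies $\mathcal{M}_{T_{V}}$ with the oriented matroid of the three-dimensional cyclic polytope on five vertices and cites Breen for its Radon partition, whereas you compute the five chirotope values directly (your pattern $(+,-,-,-,-)$ for the fan $\{13,14\}$ checks out) and apply the standard Laplace-expansion formula for the circuit of a uniform rank-$4$ oriented matroid on five elements --- both are valid, and your version has the minor advantage of being self-contained.
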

\begin{proof}
First, note that every 5-subset of $[m]$ is a circuit of $\mathcal{M}_{T}$, since every 4-subset is a basis.
Thus, let $\underline{C} = \{a \prec b \prec c \prec d \prec e\} \subseteq [m]$ be the underlying set of a signed circuit of $\mathcal{M}_{T}$, where $a \prec b \prec c \prec d \prec e$.
The signs of $C$ are determined by the restriction $\mathcal{M}_{T}(\underline{C}) = \mathcal{M}_{T_{\underline{C}}}$ by Proposition~\ref{prop:matroid_restriction}.
We have that $T_{\underline{C}}$ is a triangulation of a pentagon.
Up to rotation, there is only one triangulation of a pentagon, so we can rotate and assume that $T_{\underline{C}} = \{be, ce\}$.
We then have that $\mathcal{M}_{T_{\underline{C}}}$ is then the oriented matroid of a three-dimensional cyclic polytope with five vertices.
This implies that $C = (\{a, c, e\}, \{b, d\})$ or $C = (\{b, d\}, \{a, c, e\})$, as in Example~\ref{ex:cyclic_polytope}.
\end{proof}

\begin{corollary}
$\mathcal{M}_{T}$ is acyclic.
\end{corollary}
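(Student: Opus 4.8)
The plan is to read off the conclusion directly from the classification of signed circuits already established in Proposition~\ref{prop:circuits}. Recall that $\mathcal{M}_{T}$ is acyclic precisely when it has no positive circuit, and that a signed circuit $C$ is positive exactly when $C^{-} = \emptyset$, that is, $\underline{C} = C^{+}$. So the entire task reduces to checking that every signed circuit of $\mathcal{M}_{T}$ has a non-empty negative part.

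This follows immediately from Proposition~\ref{prop:circuits}, which tells us that every signed circuit of $\mathcal{M}_{T}$ has the form $C = (\{a, c, e\}, \{b, d\})$ or $C = (\{b, d\}, \{a, c, e\})$ for some $a \prec b \prec c \prec d \prec e$. In the first case $C^{-} = \{b, d\}$, and in the second case $C^{-} = \{a, c, e\}$. Since the five elements $a, b, c, d, e$ are distinct, both of these sets are non-empty, so $C^{-} \neq \emptyset$ in either case. Hence no signed circuit of $\mathcal{M}_{T}$ is positive, and $\mathcal{M}_{T}$ is acyclic.

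There is no real obstacle here: the work has already been done in establishing the precise two-sided form of the circuits in Proposition~\ref{prop:circuits}, and acyclicity is an essentially formal consequence. The only point requiring any care is to match the definition correctly, namely to confirm that ``positive circuit'' means $C^{-} = \emptyset$ (equivalently $\underline{C} = C^{+}$), so that exhibiting a non-empty $C^{-}$ in both circuit types genuinely rules out positive circuits.
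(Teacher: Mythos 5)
Your proof is correct and matches the paper's intent exactly: the corollary is stated immediately after Proposition~\ref{prop:circuits} with no written proof precisely because, as you observe, both circuit types have non-empty negative part, so no positive circuit exists. You have simply spelled out the one-line argument the paper leaves implicit.
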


\begin{remark}
It follows from the fact that $\mathcal{M}_{T}$ is acyclic that if it is realisable as a configuration of vectors $\mathbb{R}^{4}$, then it is realisable as an affine configuration of points in $\mathbb{R}^{3}$ (obtained by scaling the vectors so that the first coordinates equal~$1$).
Moreover, since $\mathcal{M}_{T}$ has no signed circuit where one half is a singleton, this affine configuration of points will be the vertex set of a convex polytope.
(See, for instance, \cite[pp.13-14]{anderson} for these facts.)
Hence, $\mathcal{M}_{T}$ is realisable over $\mathbb{R}$ if and only if it is realisable as the set of vertices of a convex polytope.
\end{remark}

Since $\mathcal{M}_{T}$ is acyclic, we may talk about the facets of the matroids $\mathcal{M}_{T}$.
We divide these into two types: upper facets and lower facets.
This follows standard practice for cyclic polytopes; see \cite{er}.

\begin{definition}
A facet $abc$ is an \emph{upper facet} of $\mathcal{M}_{T}$ if and only if $\chi_{T}(x, a, b, c) = +1$ for all $x \in [m] \setminus \{a, b, c\}$.
Similarly, a facet $abc$ is a \emph{lower facet} of $\mathcal{M}_{T}$ if and only if $\chi_{T}(x, a, b, c) = -1$ for all $x \in [m] \setminus \{a, b, c\}$.
\end{definition}

It follows from Lemma~\ref{lem:chi->cocirc} that all facets of $\mathcal{M}_{T}$ are either upper facets or lower facets.
We can then describe the upper and lower facets of $\mathcal{M}_{T}$ as follows.
We write $T \pm 1$ for the triangulation of the $m$-gon with triangles $H \pm 1$ where $H$ is a triangle of $T$.

\begin{proposition}\label{prop:facets}
The upper facets of the oriented matroid $\mathcal{M}_{T}$ are given by $H$ where $H$ is a triangle of $T$, and the lower facets are given by $H$ where $H$ is a triangle of $T + 1$.
\end{proposition}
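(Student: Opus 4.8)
The plan is to reduce the statement to a sign condition on the chirotope $\chi_T$, and then read that sign condition off from the combinatorics of $T$. First I would use that $\mathcal{M}_T$ is acyclic and is a uniform oriented matroid of rank $4$: its hyperplanes are exactly the $3$-subsets $H = \{p, q, r\}$ of $[m]$, and the cocircuit $Y_H$ with support $[m] \setminus H$ is given by $Y_H(w) = \chi_T(p, q, r, w)$ up to a global sign, by the standard recovery of cocircuits from a chirotope (\cite[Section~3.5]{blswz}). Hence $H$ is a facet if and only if $[m] \setminus H$ is an open halfspace, i.e.\ if and only if $Y_H$ can be oriented to be positive, i.e.\ if and only if $\chi_T(p, q, r, w)$ takes a single value over all $w \in [m] \setminus H$. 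Fixing $p \prec q \prec r$, the proposition then reduces to showing that $\chi_T(p, q, r, \cdot) \equiv -1$ exactly when $\{p, q, r\}$ is a triangle of $T$, and $\chi_T(p, q, r, \cdot) \equiv +1$ exactly when it is a triangle of $T + 1$; since these two constant values are distinct, the two families are automatically disjoint and the result follows.

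Next I would translate each constant-sign condition into the combinatorics of arcs. For each of the three gaps of $H$ I evaluate $\chi_T(p, q, r, w)$ by sorting the tuple via the alternating property and applying Definition~\ref{def:mt}. The key simplification is that within a fixed gap the requirement on $\chi_T(p, q, r, \cdot)$ is binding only at an extreme vertex of that gap (for instance $w = q + 1$ in the gap $(q, r)$ when testing the value $-1$), so that a \emph{single} arc certifies the whole gap. In this way $\chi_T(p, q, r, \cdot) \equiv -1$ becomes three existence conditions: $T$ has an arc from $q$ into $[r, p)$, an arc from $p$ into $[q, r)$, and an arc from $r$ into $[p, q)$. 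Analogously, $\chi_T(p, q, r, \cdot) \equiv +1$ becomes: $T$ has an arc from $r - 1$ into $[p, q)$, an arc from $q - 1$ into $[r, p)$, and an arc from $p - 1$ into $[q, r)$.

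The forward implications are then immediate: if $pq, qr, rp \in T$ the three sides directly witness the three $\equiv -1$ conditions, and if $(p-1)(q-1), (q-1)(r-1), (r-1)(p-1) \in T$ they witness the three $\equiv +1$ conditions. For the converses I would argue by non-crossing. In the $\equiv -1$ case, an arc $qa$ with $a \in [r, p)$ and an arc $pb$ with $b \in [q, r)$ have interleaving endpoints and so cross unless they share an endpoint; the only admissible coincidence is $b = q$, which forces $pq \in T$. Cycling $p \to q \to r \to p$ (which permutes the three conditions and the three sides) then forces $qr, rp \in T$ as well, so $\{p, q, r\}$ is a triangle of $T$. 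The identical pairing in the $\equiv +1$ case shows that the arc at $r - 1$ and the arc at $q - 1$ must share the endpoint $q - 1$, giving the arc $(q-1)(r-1) \in T$, and cyclically the other two sides of the triangle of $T + 1$.

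I expect the main obstacle to be the gap-by-gap bookkeeping of the second step — keeping the interval endpoints, the $\pm 1$ shifts, and the permutation signs consistent — together with the degenerate case of an empty gap. When a gap is empty the corresponding side of $\{p, q, r\}$ is a boundary edge and is automatically present, while the associated connection condition becomes vacuous; there the two remaining (nonempty) gaps still pin down the other two sides, since the shrunken interval forces the relevant arc directly and the non-crossing pairing is applied to the derived arcs. The only external input needed throughout is the elementary fact that two distinct arcs of a triangulation do not cross, which is already used in the well-definedness lemmas above.
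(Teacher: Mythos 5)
Your proof is correct and follows essentially the same route as the paper's: both reduce facet-ness to constancy of the chirotope on the complement of the $3$-subset via Lemma~\ref{lem:chi->cocirc}, verify this constancy directly from the sides of a triangle of $T$ or of $T+1$, and for the converse force specific arcs of $T$ to exist by testing at well-chosen vertices. The only cosmetic difference is in how the forced arcs are pinned down to be the sides of the triangle --- the paper derives a sign contradiction at a second test vertex, while you pair the forced arcs from different gaps and invoke non-crossing --- and both work.
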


In order to prove Proposition~\ref{prop:facets}, we will need to use \cite[Lemma~3.5.8]{blswz}, which we reproduce for the convenience of the reader.
This lemma tells us how the chirotope determines the signatures of the cocircuits.

\begin{lemma}[{\cite[Lemma~3.5.8]{blswz}}]\label{lem:chi->cocirc}
Let $\chi$ be a chirotope on a set $E$ with a corresponding oriented matroid $\mathcal{M}$.
Let $\underline{D}$ be the underlying set of a signed cocircuit of~$\mathcal{M}$.
Given $e, f \in \underline{D}$ with $e \neq f$, set \[
s(e, f) = \chi(e, x_{2}, \dots, x_{r}) \cdot \chi(f, x_{2}, \dots, x_{r}) \in \{-1,+1\}
\]
where $X = (x_{2}, \dots, x_{r})$ is an ordered basis of the hyperplane $E \setminus \underline{D}$ of $\mathcal{M}$.
(Here, a basis of the hyperplane is a maximally independent set contained in the hyperplane.)
Then $s(e, f)$ does not depend upon the choice of $X$.

The signatures of $\underline{D}$ are given by $(D^+, D^-)$ and $(D^-, D^+)$, where
\begin{align*}
D^+ &= \{e\} \cup \{f \in \underline{D} \setminus e \st s(e, f) = +1\},\\
D^- &= \{f \in \underline{D} \setminus e \st s(e, f) = -1\}.
\end{align*}
Moreover, this pair of signatures does not depend upon the choice of~$e$.
\end{lemma}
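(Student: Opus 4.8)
The plan is to recover both $s(e,f)$ and the claimed signatures directly from the cocircuits of $\mathcal{M}$ as read off from its chirotope, the whole argument resting on one global-sign fact about ordered bases of the hyperplane. Write $H := E \setminus \underline{D}$, which is a hyperplane by hypothesis, and fix an ordered basis $X = (x_{2}, \dots, x_{r})$ of $H$. For $f \in E$ set $D_{X}(f) := \chi(f, x_{2}, \dots, x_{r})$; since $X$ spans $H$, the tuple $(f, x_{2}, \dots, x_{r})$ is a basis of $\mathcal{M}$ precisely when $f \notin H$, so $D_{X}(f) = 0$ exactly for $f \in H$ and hence $D_{X}$ is a signed set with support $\underline{D}$. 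The tool I would invoke is the standard chirotope–cocircuit correspondence: $D_{X}$ is a signing of the cocircuit supported on $\underline{D}$, and its sign pattern is determined up to a single global sign by $H$ alone. Concretely, for any other ordered basis $X'$ of $H$ there will be $\varepsilon \in \{-1, +1\}$, independent of $f$, with $D_{X'}(f) = \varepsilon\, D_{X}(f)$ for all $f \in E$.

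Granting this global-sign fact, well-definedness of $s$ is immediate, since $s(e,f) = D_{X}(e)\, D_{X}(f) = \varepsilon^{2} D_{X'}(e)\, D_{X'}(f) = D_{X'}(e)\, D_{X'}(f)$, so $s(e,f)$ does not depend on $X$. For the signatures, the two signings of the cocircuit supported on $\underline{D}$ are $D_{X}$ and $-D_{X}$. Fixing $e$, and using $s(e,f) = D_{X}(e)\, D_{X}(f)$, we have $s(e,f) = +1$ if and only if $D_{X}(f) = D_{X}(e)$, whence
\[
D^{+} = \{e\} \cup \{f \in \underline{D} \setminus e \st s(e,f) = +1\} = \{f \in \underline{D} \st D_{X}(f) = D_{X}(e)\}.
\]
This is the positive part of $D_{X}$ when $D_{X}(e) = +1$ and the positive part of $-D_{X}$ when $D_{X}(e) = -1$; in either case $(D^{+}, D^{-})$ equals $D_{X}$ or $-D_{X}$, so $\{(D^{+}, D^{-}), (D^{-}, D^{+})\} = \{D_{X}, -D_{X}\}$, which are exactly the two signatures of the cocircuit.

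Independence of the choice of $e$ then follows with no extra work: replacing $e$ by $e'$ leaves $D^{+}$ unchanged if $D_{X}(e) = D_{X}(e')$ and interchanges $D^{+}$ with $D^{-}$ otherwise, so the unordered pair of signatures is unaffected. The main obstacle is the global-sign fact asserted in the first paragraph. In the realisable case it is just Cramer's rule: passing from $X$ to $X'$ is an invertible change of basis within the $(r-1)$-dimensional span of $H$, whose determinant sign $\varepsilon$ factors out of every maximal minor $\det(\rho(f), \rho(x_{2}), \dots, \rho(x_{r}))$ simultaneously. In general one cannot appeal to the determinant, and I would instead derive $D_{X'} = \varepsilon D_{X}$ from the Grassmann--Plücker relations packaged in axiom~(3) of Definition~\ref{def:chiro}, reducing an arbitrary change of ordered basis of $H$ to a composite of elementary moves: transpositions, controlled by the alternating property of Definition~\ref{def:chiro}\eqref{op:chiro:alt}, and single-element exchanges within $H$, each of which is forced by the axioms to rescale all the values $\chi(f, \cdot)$ by one common sign.
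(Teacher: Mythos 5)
The paper offers no proof of this lemma for you to be compared against: it is reproduced verbatim from \cite[Lemma~3.5.8]{blswz}, purely so that it can be applied later in the proof of Proposition~\ref{prop:facets}. So your attempt has to be judged on its own merits. Its first half stands up well. You correctly isolate the crux --- the ``global-sign fact'' that two ordered bases $X, X'$ of $H = E \setminus \underline{D}$ satisfy $D_{X'} = \varepsilon D_{X}$ with $\varepsilon$ independent of the argument --- and your sketch of it is completable exactly as you indicate. In particular, the single-element exchange step really is forced by the third axiom of Definition~\ref{def:chiro}: instantiating that axiom with the tuples $(e, x_{2}, x_{3}, \dots, x_{r})$ and $(f, y, x_{3}, \dots, x_{r})$, every hypothesis with $i \geqslant 2$ holds vacuously because $\chi$ vanishes on tuples with a repeated entry or contained in the rank-$(r-1)$ flat $H$, and what survives is the implication $\chi(f, X)\chi(e, X') \geqslant 0 \Rightarrow \chi(e, X)\chi(f, X') \geqslant 0$; combining this with the instantiation in which $e$ and $f$ are swapped, and using that all four values are nonzero, forces $D_{X}(e)D_{X'}(e) = D_{X}(f)D_{X'}(f)$ for all $e, f \in \underline{D}$, which is precisely your global-sign fact. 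Granting it, your deductions of the well-definedness of $s$, of the identity $(D^{+}, D^{-}) = \pm D_{X}$, and of the independence of the choice of $e$ are all correct. (You also silently repair a typo in the statement as reproduced: $e$ should not appear in $D^{-}$.)

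The gap is in the other pillar. You invoke ``the standard chirotope--cocircuit correspondence'' for the claim that $D_{X}$ is a signing of the cocircuit supported on $\underline{D}$, but that claim \emph{is}, in essence, the second assertion of the lemma: in \cite{blswz} the oriented matroid corresponding to a chirotope is constructed via its signed circuits, and Lemma~3.5.8 is exactly the statement that cocircuit signatures can then be read off from $\chi$ in this way. As written, this part of your argument assumes its own conclusion. To close it, you would need to start from whichever definition of the chirotope-to-oriented-matroid map is in force (say, the signed-circuit recipe) and verify that the signed set $D_{X}$ is genuinely a cocircuit signature of $\mathcal{M}$ --- for instance by checking orthogonality of $D_{X}$ against every signed circuit, or by passing to the dual chirotope and applying the circuit recipe there. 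That verification, not the sign bookkeeping, is the substantive content of the second claim, and it is the one piece your proposal leaves unproven.
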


In our case, the underlying unoriented matroid of $\chi_{T}$ is the rank-four uniform matroid on $[m]$, so the bases are precisely the 4-subsets of $[m]$.
This means that the (unsigned) circuits are precisely the 5-subsets of $[m]$.
The (unsigned) cocircuits are precisely the subsets of $[m]$ which do not intersect any circuit in exactly one element, and so are the $(m - 3)$-subsets of $[m]$.
As in Definition~\ref{def:hyp+facet}, the hyperplanes are the complements of the cocircuits, and so are exactly the $3$-subsets of~$[m]$.
This final fact will be particularly useful to bear in mind in the following proof.
We will use Lemma~\ref{lem:chi->cocirc} to show that the $3$-subsets described in Proposition~\ref{prop:facets} are the complements of positive cocircuits, which gives that they are facets by Definition~\ref{def:hyp+facet}.

\begin{proof}[{Proof of Proposition~\ref{prop:facets}}]
We first show that every hyperplane given by a triangle $H$ of $T$ is an upper facet.
Let $H = abc$ be a triangle of $T$.
We must show that every $x \in [m] \setminus H$ has the same sign in this cocircuit.
If $c \prec x \prec a$, then $\chi_{T}(x, a, b, c) = + 1$ due to the arc $ac$ of~$T$.
If $a \prec x \prec b$, then $\chi_{T}(x, a, b, c) = - \chi_{T}(a, x, b, c) = + 1$ due to the arc $ab$ of~$T$.
One can similarly verify for $b \prec x \prec c$ that we have $\chi_{T}(x, a, b, c) = + 1$.
We conclude that $abc$ is an upper facet of $\mathcal{M}_{T}$.
Moreover, using Lemma~\ref{lem:chi->cocirc}, one can see that the signatures of $[m] \setminus H$ are $([m] \setminus H, \emptyset)$ and $(\emptyset, [m] \setminus H)$, and so $H$ is indeed a facet of $\mathcal{M}_{T}$.

Now let $H = abc$ be such that $H - 1$ is a triangle of $T$.
If $x \in [m] \setminus H$ has $c \prec x \prec a$, then $\chi_{T}(x, a, b, c) = - 1$ due to the arc $(a - 1)(c - 1)$ of~$T$.
Proceeding similarly to above, it is clear that $x$ must always have the sign $- 1$ whenever it lies in $[m] \setminus H$.
We conclude that $H$ is a lower facet of $\mathcal{M}_{T}$.

We now show that all facets of $\mathcal{M}_{T}$ are given by $H$ for $H$ either a triangle of $T$ or a triangle of $T + 1$.
Indeed, suppose that $abc$ is an upper facet of $\mathcal{M}_{T}$.
By rotating, we may assume that $c = m$ and $a \neq 1$.
Since $abc$ is an upper facet, we have $\chi(1, a, b, c) = +1$.
Then there must be an arc $xc$ of $T$ with $a \preccurlyeq x \prec b$.
If $x \neq a$, then $\chi(x, a, b, c) = -\chi(a, x, b, c) = -1$ due to the arc~$xc$, which contradicts the fact that $abc$ is a facet.
We conclude that $ac$ is an arc of~$T$.
One can show in a similar way that $ab$ and $bc$ are arcs of $T$ if they are not boundary edges of the $m$-gon, so that $abc$ is an upper facet.
A similar argument shows that if $abc$ is a lower facet, then it is a triangle of $T + 1$.
Hence, all facets of $\mathcal{M}_{T}$ are given by triangles of $T$ or $T + 1$.
\end{proof}

Given a basis $\sigma$ of $\mathcal{M}_{T}$, one can define upper and lower facets of $\sigma$ as the upper and lower facets of the matroid $\mathcal{M}_{T}(\sigma)$.
Explicitly, we have the following.

\begin{definition}\label{def:up_low_facets}
Let $\sigma = \{a, b, c, d\}$ be a basis of $\mathcal{M}_{T}$ with $a \prec b \prec c \prec d$.
Then if $\chi_{T}(a, b, c, d) = +1$, we say that $\{a, b, d\}$ and $\{b, c, d\}$ are \emph{upper facets of $\sigma$ with respect to $T$} and $\{a, b, c\}$ and $\{a, c, d\}$ are \emph{lower facets of $\sigma$ with respect to $T$}.
If $\chi_{T}(a, b, c, d) = -1$, then $\chi_{T}(d, a, b, c) = +1$, so the definition reduces to the former case.
\end{definition}

\begin{lemma}\label{lem:one_upper_one_lower}
Given a triangulation $\Delta$ of $\mathcal{M}_{T}$ with $\sigma, \tau \in \Delta$, if $|\sigma \cap \tau| = 3$, then $\sigma \cap \tau$ is an upper facet of one of $\sigma$ or $\tau$, and a lower facet of the other. 
\end{lemma}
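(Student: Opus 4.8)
The plan is to reduce the whole question to the five-element set $V = \sigma \cup \tau$ and settle it with a single sign computation. Write $F = \sigma \cap \tau$ and let $g, h$ be the two elements of $V \setminus F$, so that $\sigma = F \cup \{g\} = V \setminus \{h\}$ and $\tau = F \cup \{h\} = V \setminus \{g\}$; fix the cyclic order $V = \{v_1 \prec v_2 \prec v_3 \prec v_4 \prec v_5\}$. By Proposition~\ref{prop:restriction} the restriction $\mathcal{M}_T(V) = \mathcal{M}_{T_V}$ has rank $4$ with chirotope $\chi_T|_V$, and by Proposition~\ref{prop:circuits} it carries a unique signed circuit $C$ up to sign, with $\underline{C} = V$. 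The first ingredient I record is a uniform reformulation of Definition~\ref{def:up_low_facets}: for a basis $\rho$ and an element $x \in \rho$, the facet $\rho \setminus \{x\}$ is an \emph{upper} facet of $\rho$ if and only if $\chi_T(\rho)\,(-1)^{p} = -1$, where $p$ is the position of $x$ in the cyclic order of $\rho$. This is exactly Definition~\ref{def:up_low_facets} when $\chi_T(\rho) = +1$, and when $\chi_T(\rho) = -1$ the upper and lower facets are the opposite ones, as one sees by applying Proposition~\ref{prop:facets} to the quadrilateral triangulation $T_\rho$ (flipping $\chi_T(\rho)$ switches which diagonal lies in $T_\rho$).

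Next I would extract the constraint imposed by $\Delta$ being a triangulation, namely that $g$ and $h$ lie in the same part of $C$. Unwinding the overlap condition of Definition~\ref{def:mat_triang}\eqref{op:mat_triang:int} for the ordered pair $(\sigma, \tau)$: since $\sigma = V \setminus \{h\}$, the inclusion $C^{+} \subseteq \sigma$ holds if and only if $h \in C^{-}$; and since $\tau = V \setminus \{g\}$, an element $a \in C^{+}$ with $\underline{C} \setminus \{a\} \subseteq \tau$ must be $a = g$, forcing $g \in C^{+}$. Thus $\sigma$ and $\tau$ overlap on $C$ exactly when $g \in C^{+}$ and $h \in C^{-}$, and running the same argument with the opposite circuit $-C$ (also a signed circuit of $\mathcal{M}_T$) covers the case $g \in C^{-}$, $h \in C^{+}$. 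Since $\Delta$ is a triangulation, neither overlap occurs, so $g$ and $h$ cannot lie in opposite parts of $C$, and hence $C(g) = C(h)$.

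The computation then runs through the cofactor identity relating the chirotope to the circuit on $\underline{C} = V$: there is a global sign $\epsilon$ with $C(v_i) = \epsilon\,(-1)^{i}\,\chi_T(V \setminus \{v_i\})$ for all $i$, which is the Plücker relation among the five points of the realisable restriction $\mathcal{M}_{T_V}$ (equivalently the general chirotope-to-circuit conversion of \cite{blswz}). Write $i_\sigma$ for the position of $g$ in the cyclic order of $\sigma$, write $i_\tau$ for the position of $h$ in that of $\tau$, and let $j_g, j_h$ be the positions of $g, h$ in $V$. By the upper-facet criterion, the statement amounts to showing that the product $\chi_T(\sigma)\,\chi_T(\tau)\,(-1)^{i_\sigma + i_\tau}$ equals $-1$. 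Substituting $\chi_T(\sigma) = \chi_T(V \setminus \{h\}) = \epsilon\,(-1)^{j_h}\,C(h)$ and $\chi_T(\tau) = \epsilon\,(-1)^{j_g}\,C(g)$, this product becomes $(-1)^{\,j_g + j_h + i_\sigma + i_\tau}\,C(g)\,C(h)$. Since removing $h$ from $V$ leaves $g$ in position $j_g$ or $j_g - 1$ according as $g$ precedes or follows $h$, and symmetrically for $h$ in $\tau$, one gets $i_\sigma + i_\tau = j_g + j_h - 1$, so the exponent is $2(j_g + j_h) - 1$, which is odd; combined with $C(g) = C(h)$, the product equals $-1$. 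Hence exactly one of $F$ is an upper facet of $\sigma$ and $F$ is an upper facet of $\tau$ holds, which is the claim.

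The main obstacle is the sign bookkeeping of the last paragraph: one must align the position conventions in the cofactor identity and in the upper-facet criterion so that the parities combine to the decisive $-1$, and one must treat the case $\chi_T(\sigma) = -1$ (or $\chi_T(\tau) = -1$) rather than only the orientation written out in Definition~\ref{def:up_low_facets}. The conceptual crux, by contrast, is recognising that Definition~\ref{def:mat_triang}\eqref{op:mat_triang:int} is precisely what forces $g$ and $h$ into the same part of $C$; once $C(g) = C(h)$ is established, the identity $\chi_T(\sigma)\,\chi_T(\tau)\,(-1)^{i_\sigma + i_\tau} = -C(g)C(h)$ does the rest.
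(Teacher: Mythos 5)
Your proof is correct, but it reaches the conclusion by a different mechanism than the paper. Both arguments restrict to the five-element set $V = \sigma \cup \tau$ and both ultimately rest on the overlap axiom, Definition~\ref{def:mat_triang}\eqref{op:mat_triang:int}, together with the structure of the unique circuit supported on $V$ (Proposition~\ref{prop:circuits}). The paper, however, argues by contradiction and geometrically: assuming $\sigma \cap \tau$ is a lower facet of both, it identifies $\mathcal{M}_T(V)$ with the oriented matroid of a cyclic polytope via \cite{breen}, observes that points slightly above the common facet would then lie in both realised simplices, and extracts from this geometric intersection a circuit on which $\sigma$ and $\tau$ overlap. You instead run the implication in the other direction and purely combinatorially: the non-overlap condition forces the two private vertices $g$ and $h$ into the same part of the circuit, and then the cofactor identity $C(v_i) = \epsilon(-1)^i\chi_T(V \setminus \{v_i\})$ together with your uniform restatement of Definition~\ref{def:up_low_facets} yields $\chi_T(\sigma)\chi_T(\tau)(-1)^{i_\sigma+i_\tau} = -C(g)C(h) = -1$, which is exactly the ``one upper, one lower'' statement. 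Your sign bookkeeping checks out (including the invariance of $\chi_T(\rho)(-1)^p$ under cyclic relabelling and the identity $i_\sigma + i_\tau = j_g + j_h - 1$), and your extension of Definition~\ref{def:up_low_facets} to the case $\chi_T(\rho) = -1$ is the correct reading, justified by the preceding sentence in the paper defining upper and lower facets of $\sigma$ via $\mathcal{M}_T(\sigma)$ and Proposition~\ref{prop:facets}. What your route buys is that it replaces the paper's informal ``points a small distance above'' argument with an explicit, fully checkable computation, needing realisability of the five-point restriction only to invoke the Cramer-rule form of the chirotope--circuit relation (and even that could be replaced by the general conversion in \cite{blswz}); the cost is the extra care required to pin down position conventions, which the paper's geometric shortcut avoids.
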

\begin{proof}
Suppose for contradiction that $\sigma \cap \tau$ is a lower facet of both $\sigma$ and $\tau$, with the case where it is an upper facet of both being similar.
By Proposition~\ref{prop:restriction}, it suffices to show the claim when restricting to $\sigma \cup \tau$.
However, $|\sigma \cup \tau| = 5$, whence the restriction $\mathcal{M}_{T}(\sigma \cup \tau)$ is the oriented matroid of a three-dimensional cyclic polytope with five vertices by Proposition~\ref{prop:circuits}.
In this case, the notions of upper and lower facets coincide with the usual notions for simplices in cyclic polytopes; see, for instance, \cite[Section~2]{er}.
For cyclic polytopes, if $\sigma \cap \tau$ is a lower facet of both $\sigma$ and $\tau$ then their geometric realisations $||\sigma||$ and $||\tau||$ intersect in their interiors, since points a small distance above $||\sigma||$ and $||\tau||$ must lie in both $||\sigma||$ and $||\tau||$.
(Here we are using the standard realisation of cyclic polytopes as the convex hull of a finite set of points on the moment curve --- see  for instance \cite[Section~2.1]{njw-hst} --- and $||\sigma||$ denotes the geometric realisation of $\sigma$ as the convex hull of its vertex set on the moment curve.)

Since $||\sigma||$ and $||\tau||$ intersect in their interiors, there is therefore a circuit $C = (C^{+}, C^{-})$ such that $\sigma \supseteq C^{+}$ and $\tau \supseteq C^{-}$.
(In other words, either $\sigma$ has a one-dimensional face which intersects a two-dimensional face of $\tau$ or \textit{vice versa}.)
In fact, since $|\underline{C}| = 5 = |\sigma \cup \tau|$ and $|\sigma| = |\tau| = 4$, we have $|\sigma \setminus \tau| = 1$.
We can then let $\sigma \setminus \tau = \{a\}$, and so $a \in C^{+}$ and $\underline{C} \setminus \{a\} \subseteq \tau$.
Hence, $\sigma$ and $\tau$ overlap on a circuit, meaning that they cannot be elements of the same triangulation, and so we have arrived at a contradiction.
\end{proof}

Noting Lemma~\ref{lem:one_upper_one_lower}, we make the following definition.

\begin{definition}
Given a triangulation $\Delta$ of $\mathcal{M}_{T}$, we define a relation $\triangrel$ on $\Delta$ to be the smallest reflexive transitive relation such that $\sigma \triangrel \tau$ whenever $\sigma \cap \tau$ is an upper facet of $\sigma$ and a lower facet of $\tau$.
Following \cite[Definition~2.13]{rs-baues}, we say that a triangulation $\Delta$ of $\mathcal{M}_{T}$ is \emph{stackable} if the relation $\triangrel$ is a partial order on $\Delta$.
It suffices to show that $\triangrel$ is an anti-symmetric relation.
\end{definition}

In fact, we conjecture the following.

\begin{conjecture}
Every triangulation $\Delta$ of $\mathcal{M}_{T}$ is stackable.
\end{conjecture}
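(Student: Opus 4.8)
The plan is to recast stackability as the acyclicity of a directed graph and then to produce a combinatorial ``height'' refining it. Since $\triangrel$ is by definition the reflexive–transitive closure of the elementary relations $\sigma \to \tau$ (those for which $\sigma \cap \tau$ is an upper facet of $\sigma$ and a lower facet of $\tau$), the relation $\triangrel$ fails to be a partial order precisely when it fails to be antisymmetric, which happens if and only if the directed graph $G_{\Delta}$ on the vertex set $\Delta$ with these elementary edges contains a directed cycle through distinct simplices. Thus it suffices to prove that $G_{\Delta}$ is acyclic for every triangulation $\Delta$ of $\mathcal{M}_{T}$. By Lemma~\ref{lem:one_upper_one_lower} every codimension-one adjacency of $\Delta$ contributes exactly one such edge, so $G_{\Delta}$ is simply the adjacency (dual) graph of $\Delta$ with each edge oriented from the lower to the upper simplex.

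First I would pin down the local rule governing the orientation of each edge. If $\sigma, \tau \in \Delta$ share a facet $F$, then $\underline{C} := \sigma \cup \tau$ has five elements, and by Proposition~\ref{prop:restriction} the restriction $\mathcal{M}_{T}(\underline{C}) = \mathcal{M}_{T_{\underline{C}}}$ is the oriented matroid of the three-dimensional cyclic polytope on five vertices, whose unique circuit is $(\{a, c, e\}, \{b, d\})$ for $\underline{C} = \{a \prec b \prec c \prec d \prec e\}$ by Proposition~\ref{prop:circuits}. Applying Definition~\ref{def:up_low_facets} to each of the five tetrahedra $\underline{C} \setminus \{x\}$ determines, for every admissible shared facet $F$, which of the two adjacent tetrahedra is the lower one; this is a finite check independent of $T$. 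The upshot is an explicit description of the edge $\sigma \to \tau$ purely in terms of the cyclic positions of the two apices $\sigma \setminus F$ and $\tau \setminus F$ relative to the circuit $(\{a,c,e\},\{b,d\})$, matching the orientation one sees for cyclic polytopes. I expect this step to be routine.

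The substance of the proof is the global step: passing from ``each edge of $G_{\Delta}$ points upward'' to ``$G_{\Delta}$ has no directed cycle''. If the companion conjecture that $\mathcal{M}_{T}$ is realisable were available, acyclicity would follow from the standard fact that the stacking relation of any triangulation of a polytope, taken with respect to a generic height direction, is a partial order. The difficulty is to carry out this argument intrinsically, without assuming realisability, and one should not hope for a cheap scalar potential: the local analysis above already shows that along an upward edge the apex of the upper simplex may be either larger or smaller (in the cyclic order) than that of the lower one, so no fixed weighting $\sigma \mapsto \sum_{v \in \sigma} w(v)$ can be monotone. Instead I would construct a combinatorial surrogate for the height by taking a single-element extension of the acyclic oriented matroid $\mathcal{M}_{T}$ playing the role of a generic point ``at the top'' in the vertical direction, and use the theory of lexicographic extensions and shellings of oriented matroids to extract from it a total order on $\Delta$ refining all the elementary relations; acyclicity of $G_{\Delta}$ would then be immediate from the existence of such a refinement.

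An alternative, and perhaps more self-contained, route is induction on $m$ via the restriction formula $\mathcal{M}_{T}(V) = \mathcal{M}_{T_{V}}$ of Proposition~\ref{prop:restriction}: deleting a vertex $v$ of the $m$-gon sends $\Delta$ to an induced triangulation of $\mathcal{M}_{T_{[m] \setminus \{v\}}}$, for which stackability would hold by induction, and one would then analyse the star of $v$ to show that reinserting the simplices through $v$ cannot create a directed cycle. In either approach the main obstacle is the same, and is exactly what keeps the statement a conjecture rather than a theorem: controlling the global interaction of the upward relations across simplices whose ``base regions'' need not be comparable, in the absence of an ambient height function guaranteed by realisability.
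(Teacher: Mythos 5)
The statement you are addressing is left as an open conjecture in the paper, and your submission is a programme rather than a proof: you say so yourself in your closing sentence, and that assessment is accurate. Your initial reduction is fine --- since $\triangrel$ is a reflexive--transitive closure, only antisymmetry can fail, so stackability is equivalent to acyclicity of the directed dual graph $G_{\Delta}$ whose edges are supplied, one per codimension-one adjacency, by Lemma~\ref{lem:one_upper_one_lower}; and the local analysis via Propositions~\ref{prop:restriction} and~\ref{prop:circuits} is consistent with how the paper itself handles five-element restrictions. But the entire content of the conjecture is the global step, and neither of your two proposed routes closes it. For the extension route: constructing a single-element extension of $\mathcal{M}_{T}$ that plays the role of a generic point ``at the top'' presupposes an intrinsic notion of the vertical direction, which is exactly the datum that realisability would provide and that is missing here; even granting such an extension, you would still have to prove that the induced shelling order refines every elementary relation $\sigma \to \tau$, which is essentially a restatement of the conjecture rather than a reduction of it.

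The inductive route has a more concrete defect. Deleting a vertex $v$ of the $m$-gon does \emph{not} send a triangulation $\Delta$ of $\mathcal{M}_{T}$ to a triangulation of $\mathcal{M}_{T_{[m]\setminus\{v\}}}$: the simplices of $\Delta$ containing $v$ do not simply vanish, the link of $v$ must be re-triangulated, and there is in general no canonical induced triangulation on the deletion (this is a well-known subtlety already for point configurations). So the inductive hypothesis cannot be applied to ``$\Delta$ restricted to $[m]\setminus\{v\}$'' without first constructing such a restriction, and the subsequent claim that reinserting the star of $v$ cannot create a directed cycle is asserted, not argued. In short: the reduction to acyclicity and the local orientation rule are correct but routine, and the genuinely hard global step --- controlling interactions among upward edges between simplices with incomparable base regions, absent an ambient height function --- remains exactly where the paper leaves it.
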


\section{Showing the bijection}\label{sect:bijection}

We now show that stackable triangulations of these oriented matroids are in bijection with equivalence classes of maximal green sequences.
Throughout this section we fix an integer $n \geqslant 1$.

\subsection{Preliminary results}

We first show the following key result, which shows the relation between our oriented matroid $\mathcal{M}_{T}$ and the choice of extriangulated structure on $\cc{n}$ coming from the cluster tilting object $\sft$ corresponding to $T + 1$.
This lemma is the motivation for Definition~\ref{def:mt}.

\begin{lemma}\label{lem:key}
Let $\sft \in \cc{n}$ be a cluster-tilting object, and let $T$ be the triangulation of the $(n + 3)$-gon corresponding to $\Sigma\sft$.
Let $\sfo_{A}, \sfo_{B} \in \cc{n}$ be such that $A = \{a, c\}$, $B = \{b, d\}$ with $a \prec b \prec c \prec d$.
Then $\Ext_{\sft}^{1}(\sfo_{B}, \sfo_{A}) \neq 0$ if and only if $\chi_{T}(a, b, c, d) = +1$; and so $\Ext_{\sft}^{1}(\sfo_{B}, \sfo_{A}) = 0$ if and only if $\chi_{T}(a, b, c, d) = -1$.
\end{lemma}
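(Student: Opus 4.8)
The plan is to reduce the relative Ext to a concrete factorisation problem and then read it off from the combinatorics of morphisms in $\cc{n}$. Recall that $\Ext_{\sft}^{1}(\sfo_{B}, \sfo_{A})$ is the subspace of $\Hom_{\cc{n}}(\sfo_{B}, \Sigma\sfo_{A})$ consisting of those maps which factor through $\mathrm{add}\,\Sigma\sft$, and that $\Sigma\sft$ has indecomposable summands $\sfo_{E}$ for $E$ an arc of $T$. Writing $A - 1 = \{a - 1, c - 1\}$, I would first use the $\Hom$-criterion to check that $\Hom_{\cc{n}}(\sfo_{B}, \Sigma\sfo_{A}) = \Hom_{\cc{n}}(\sfo_{B}, \sfo_{A - 1}) \cong K$ is one-dimensional, with generator $h$, since $A$ and $B$ cross. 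Because $\sfo_{A - 1}$ is indecomposable and this $\Hom$-space is one-dimensional, $h$ factors through $\mathrm{add}\,\Sigma\sft$ if and only if it factors through a single summand $\sfo_{E}$ with $E$ an arc of $T$. Thus $\Ext_{\sft}^{1}(\sfo_{B}, \sfo_{A}) \neq 0$ if and only if there is an arc $E$ of $T$ admitting nonzero maps $\sfo_{B} \to \sfo_{E} \to \sfo_{A - 1}$ whose composite is nonzero.

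Next I would determine which arcs $E = \{x, y\}$ even admit nonzero maps in both slots. By the crossing criterion, $\Hom_{\cc{n}}(\sfo_{E}, \sfo_{A - 1}) \neq 0$ is equivalent to $E$ crossing $A$, which forces one endpoint, say $x$, to satisfy $a \prec x \prec c$ and the other, $y$, to satisfy $c \prec y \prec a$. Imposing also $\Hom_{\cc{n}}(\sfo_{B}, \sfo_{E}) \neq 0$, that is, that $B - 1 = \{b - 1, d - 1\}$ crosses $E$, then cuts these ranges down to exactly two families: family~(I), with $b \preccurlyeq x \prec c$ and $d \preccurlyeq y \prec a$; and family~(II), with $a \prec x \prec b - 1$ and $c \prec y \prec d - 1$. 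When $b = a + 1$ or $d = c + 1$, the boundary cases only shrink family~(II), so they cause no difficulty.

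The heart of the argument is to show that the composite $\sfo_{B} \to \sfo_{E} \to \sfo_{A - 1}$ is nonzero precisely for $E$ in family~(I). For this I would use the explicit description of morphisms in $\cc{n}$ specialised from \cite{ot}: a nonzero map between two arcs realises the minimal clockwise rotation carrying the endpoints of the source to those of the target, and a composite of two such maps is nonzero exactly when the two rotations concatenate monotonically into the minimal rotation of the composite. For a family-(I) arc this holds, since the endpoints rotate $b \rightsquigarrow x \rightsquigarrow c - 1$ and $d \rightsquigarrow y \rightsquigarrow a - 1$, reproducing the rotation that defines $h$; for a family-(II) arc the induced rotation overshoots and does not match $h$, so, the $\Hom$-space being one-dimensional, the composite vanishes. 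This composition rule, together with the careful bookkeeping of rotations and boundary cases, is where I expect the real work to lie.

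Finally I would assemble the equivalence. Family-(I) arcs $xy$ are exactly those with $b \preccurlyeq x \prec c$ and $d \preccurlyeq y \prec a$, which are precisely the arcs witnessing $\chi_{T}(a, b, c, d) = +1$ in Definition~\ref{def:mt}. Hence $h$ factors through $\Sigma\sft$ if and only if $T$ contains a family-(I) arc, that is, if and only if $\chi_{T}(a, b, c, d) = +1$. In the remaining case $\chi_{T}(a, b, c, d) = -1$, the well-definedness lemma guarantees that $T$ contains no family-(I) arc, so every arc of $T$ with nonzero maps in both slots lies in family~(II) and contributes a zero composite; thus $h$ does not factor through $\Sigma\sft$ and $\Ext_{\sft}^{1}(\sfo_{B}, \sfo_{A}) = 0$. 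Since $\chi_{T}(a, b, c, d)$ always takes one of the values $\pm 1$, this proves both equivalences in the statement.
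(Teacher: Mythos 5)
Your proposal is correct and follows essentially the same route as the paper: reduce to whether the unique-up-to-scalar map $\sfo_{B} \to \Sigma\sfo_{A}$ factors through a single indecomposable summand $\sfo_{E}$ of $\Sigma\sft$ (using one-dimensionality of the $\Hom$-space), then identify combinatorially which summands yield a non-zero composite and match these against Definition~\ref{def:mt}. The composition criterion you defer as ``the real work'' is precisely the paper's Lemma~\ref{lem:composition}, which is proved by lifting the morphisms to the bounded derived category $\mathscr{D}^{b}(K\mathbb{A}_{n})$ and reading off the non-vanishing of the composite from the $\Hom$-description there.
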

\begin{proof}
Note that by Section~\ref{sect:back:cca}, we have that $\Ext_{\cc{n}}^{1}(\sfo_{B}, \sfo_{A}) \neq 0$ due to the fact that $a \prec b \prec c \prec d$, since, by definition, $\Ext_{\cc{n}}^{1}(\sfo_{B}, \sfo_{A}) = \Hom_{\cc{n}}(\sfo_{B}, \Sigma\sfo_{A})$.
In the extriangulated structure defined by $\sft$, we have that $\Ext_{\sft}^{1}(\sfo_{B}, \sfo_{A}) \neq 0$ if and only if the non-zero map $\sfo_{B} \to \Sigma\sfo_{A}$ factors through $\Sigma\sft$.
This is then the case precisely if there is a non-zero summand $\sfo_{X}$ of $\Sigma\sft$ such that there is a non-zero composition $\sfo_{B} \to \sfo_{X} \to \Sigma\sfo_{A}$, since the map $\sfo_{B} \to \Sigma\sfo_{A}$ is unique up to scalar, by Section~\ref{sect:back:cca}.
By Lemma~\ref{lem:composition} and the description of $\Sigma$ from Section~\ref{sect:back:cca}, this happens if and only if we have $b \preccurlyeq x \prec c \prec d \preccurlyeq y \prec a$, where $X = \{x, y\}$, and this is the case if and only if $\chi_{T}(a, b, c, d) = +1$.
\end{proof}

Here we used the following lemma, whose proof we will postpone to the appendix.

\begin{lemma}\label{lem:composition}
A pair of non-zero morphisms $\sfo_{P} \to \sfo_{Q}$ and $\sfo_{Q} \to \sfo_{R}$ in $\cc{n}$ compose to give a non-zero morphism $\sfo_{P} \to \sfo_{R}$ if and only if $p_{0} - 1 \preccurlyeq q_{0} - 1 \prec r_{0} \prec p_{1} - 1 \preccurlyeq q_{1} - 1 \prec r_{1}$, where $P = \{p_{0}, p_{1}\}$, $Q = \{q_{0}, q_{1}\}$, and $R = \{r_{0}, r_{1}\}$.
\end{lemma}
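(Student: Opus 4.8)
The plan is to reduce the statement to two largely independent pieces: a cyclic-order bookkeeping equivalence and an explicit computation of the composite. Since by Section~\ref{sect:back:cca} every nonzero morphism space in $\cc{n}$ is one-dimensional, the composite $\sfo_P \to \sfo_R$ either vanishes or spans $\Hom_{\cc{n}}(\sfo_P, \sfo_R)$; in particular it is nonzero only if $\Hom_{\cc{n}}(\sfo_P, \sfo_R) \neq 0$, and what must be shown is that, given the two factors $\sfo_P \to \sfo_Q$ and $\sfo_Q \to \sfo_R$ are nonzero, the composite is nonzero if and only if the stated chain holds. I would split this into (i) the chain is equivalent to the simultaneous non-vanishing of $\Hom_{\cc{n}}(\sfo_P, \sfo_Q)$, $\Hom_{\cc{n}}(\sfo_Q, \sfo_R)$ and $\Hom_{\cc{n}}(\sfo_P, \sfo_R)$; and (ii) the composite is nonzero if and only if all three of these Hom spaces are nonzero.

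For (i), I would argue entirely in the cyclic order on $[n+3]$, using the Hom criterion of Section~\ref{sect:back:cca} together with the identity $b - 1 \prec c \iff b - 1 \preccurlyeq c - 1$. From $\Hom_{\cc{n}}(\sfo_P, \sfo_Q) \neq 0$ one reads off $p_0 - 1 \preccurlyeq q_0 - 1 \prec p_1 - 1 \preccurlyeq q_1 - 1$; the non-vanishing of $\Hom_{\cc{n}}(\sfo_P, \sfo_R)$ places $r_0$ in the arc from $p_0 - 1$ to $p_1 - 1$ and $r_1$ in the complementary arc, while the non-vanishing of $\Hom_{\cc{n}}(\sfo_Q, \sfo_R)$ forces $q_0 - 1 \prec r_0$ and $q_1 - 1 \prec r_1$; combining these pins down exactly the chain $p_0 - 1 \preccurlyeq q_0 - 1 \prec r_0 \prec p_1 - 1 \preccurlyeq q_1 - 1 \prec r_1$. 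The converse is immediate by reading the three interleaving conditions back off the chain. The only delicate points are the boundary cases, handled by noting that no vertex lies strictly between two cyclically adjacent vertices, so that, for example, $q_0 \prec p_1 - 1$ is forced rather than merely $q_0 \prec p_1$.

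For (ii), I would lift $\sfo_P, \sfo_Q, \sfo_R$ to indecomposables of $\mathscr{D}^b(KQ)$. As $KQ$ is hereditary, each is a shift of an interval module, and each nonzero morphism between indecomposables is either a (shifted) module homomorphism or lies in $\Ext^1$. A nonzero morphism of interval modules is the canonical map through the overlap interval, and a direct check shows that the composite of two such module maps is nonzero precisely when the outer endpoints nest, that is, precisely when the total Hom is nonzero. This settles (ii) when both factors are module homomorphisms, and this module computation is the engine of the whole argument.

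The hard part will be transporting (ii) across the orbit functor $\nu\Sigma^{-2}$ defining $\cc{n}$. A morphism of $\cc{n}$ is a sum of morphisms $\sfo_P \to (\nu\Sigma^{-2})^{i}\sfo_Q$ in $\mathscr{D}^b(KQ)$, and composing adds the orbit indices; I must therefore check that the index at which $\Hom_{\cc{n}}(\sfo_P, \sfo_R)$ is supported equals the sum of the indices of the two factors, and in particular treat the case where both factors have degree one, in which the naive composite lands in $\Ext^2 = 0$ and can only be realised through a different orbit representative. I expect this case analysis, organised according to the relative cyclic positions of the three arcs, to be the main obstacle; once it is verified that in every case the composite is nonzero exactly when all three Hom spaces are, combining with (i) yields the lemma.
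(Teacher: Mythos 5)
Your reduction rests on the claim (ii) that the composite is non-zero if and only if all three of $\Hom_{\cc{n}}(\sfo_P,\sfo_Q)$, $\Hom_{\cc{n}}(\sfo_Q,\sfo_R)$, $\Hom_{\cc{n}}(\sfo_P,\sfo_R)$ are non-zero, and on the claim (i) that this conjunction is equivalent to the cyclic chain. Both are false. Take $n=5$ (the octagon) and $P=R=\{1,5\}$, $Q=\{3,7\}$. Then $Q$ crosses the arc $P-1=\{8,4\}$, and $R=P$ crosses both $Q-1=\{2,6\}$ and $P-1$, so all three Hom spaces are non-zero (the third is spanned by the identity). Yet the composite $\sfo_P\to\sfo_Q\to\sfo_P$ must vanish: since $\End(\sfo_P)\cong K$, a non-zero composite would be an isomorphism, exhibiting $\sfo_P$ as a summand of the indecomposable $\sfo_Q\not\cong\sfo_P$. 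Correspondingly, no labelling of endpoints makes the chain $p_0-1\preccurlyeq q_0-1\prec r_0\prec p_1-1\preccurlyeq q_1-1\prec r_1$ hold for this triple, so (i) fails too: the crossing condition coming from $\Hom(\sfo_P,\sfo_R)$ and the one coming from $\Hom(\sfo_Q,\sfo_R)$ can force \emph{incompatible} labellings of $\{r_0,r_1\}$, which your derivation of the chain silently assumes cannot happen. (A counterexample with $P,Q,R$ pairwise distinct: $n=6$, $P=\{1,6\}$, $Q=\{4,8\}$, $R=\{2,6\}$.)

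The obstacle you flag in your last paragraph --- that the orbit index supporting $\Hom_{\cc{n}}(\sfo_P,\sfo_R)$ need not equal the sum of the indices of the two factors --- is therefore not a case analysis that will close, but precisely the point at which your equivalence breaks; when the indices do not add up, the composite lands in a vanishing graded piece and is zero even though the orbit Hom is non-zero. The repair is to state the ``three Homs non-zero'' condition in $\mathscr{D}^b(K\mathbb{A}_n)$ for the specific lifts $\sfu_{P'}\to\sfu_{Q'}\to\sfu_{R'}$ determined by the two given morphisms, so that the degree of the third Hom space is forced. There the composite of non-zero maps between indecomposables is non-zero exactly when all three derived-category Hom spaces are non-zero, and the three resulting integer inequalities concatenate into a single chain that descends to the cyclic one. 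This is the paper's route, which quotes the combinatorial description of $\mathscr{D}^b(K\mathbb{A}_n)$ and its compositions from Oppermann--Thomas; your interval-module computation is a reasonable way to establish that derived-category input, but as written your statements (i) and (ii) are incorrect and the argument does not go through.
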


\begin{definition}\label{def:mt_ct}
Given a cluster-tilting object $\sft$ in $\cc{n}$, we thus define $\mathcal{M}_{\sft}$ to be the oriented matroid $\mathcal{M}_{T}$, where $T$ is the triangulation of the $(n + 3)$-gon corresponding to $\Sigma\sft$.
We similarly write $\chi_{\sft} = \chi_{T}$.
\end{definition}

\subsection{Maximal green sequence to triangulation}

We explain how to construct a triangulation of $\mathcal{M}_{\sft}$ from a maximal green sequence of~$\sft$.

\begin{construction}
Let $\mathcal{G}$ be a maximal green sequence of $\sft$, given by a sequence of cluster-tilting objects $\sft = \sft_{0}, \sft_{1}, \dots, \sft_{l} = \Sigma\sft$ in $\cc{n}$.
Denote the exchange pair of the mutation from $\sft_{i - 1}$ to $\sft_{i}$ by $(\sfo_{A_{i}}, \sfo_{B_{i}})$.
Since we have $\Ext_{\sft}^{1}(\sfo_{B_{i}}, \sfo_{A_{i}}) \neq 0$, the arcs defined by $A_{i}$ and $B_{i}$ cross in the $(n + 3)$-gon, and so $|A_{i} \cup B_{i}| = 4$.
Hence, we define the set \[\Delta_{\mathcal{G}} := \{A_{i} \cup B_{i} \st (\sfo_{A_{i}}, \sfo_{B_{i}}) \in \exch{\mathcal{G}}\}\]
of $3$-simplices of $\mathcal{M}_{\sft}$.
We claim that this is indeed a triangulation of $\mathcal{M}_{\sft}$.
\end{construction}

First, we note that this triangulation respects equivalence of maximal green sequences.

\begin{lemma}
If $\mathcal{G}$ and $\mathcal{G}'$ are equivalent, then $\Delta_{\mathcal{G}} = \Delta_{\mathcal{G}'}$.
\end{lemma}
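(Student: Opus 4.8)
The plan is to observe that, by its very construction, the set $\Delta_{\mathcal{G}}$ is a function of the set of exchange pairs $\exch{\mathcal{G}}$ alone, and of no other data of the maximal green sequence $\mathcal{G}$. Concretely, $\Delta_{\mathcal{G}}$ is precisely the image of $\exch{\mathcal{G}}$ under the assignment $(\sfo_{A}, \sfo_{B}) \mapsto A \cup B$. I would first check that this assignment is well-defined on exchange pairs: each exchange pair $(\sfo_{A_{i}}, \sfo_{B_{i}})$ arising in a $\sft$-green mutation satisfies $\Ext_{\sft}^{1}(\sfo_{B_{i}}, \sfo_{A_{i}}) \neq 0$, so the arcs $A_{i}$ and $B_{i}$ cross and $A_{i} \cup B_{i}$ is a genuine $4$-subset of $[n+3]$ determined by the pair. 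In particular, the value $A \cup B$ depends only on the pair $(\sfo_A, \sfo_B)$, not on the position $i$ at which it occurs, nor on the intermediate cluster-tilting objects $\sft_{0}, \sft_{1}, \dots, \sft_{l}$ of $\mathcal{G}$.

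Having established that $\Delta_{\mathcal{G}}$ factors through $\exch{\mathcal{G}}$, I would then simply unwind the definition of equivalence. By definition, $\mathcal{G}$ and $\mathcal{G}'$ being equivalent means exactly that $\exch{\mathcal{G}} = \exch{\mathcal{G}'}$. Applying the well-defined assignment above to both of these equal sets yields
\[
\Delta_{\mathcal{G}} = \{A \cup B \st (\sfo_{A}, \sfo_{B}) \in \exch{\mathcal{G}}\} = \{A \cup B \st (\sfo_{A}, \sfo_{B}) \in \exch{\mathcal{G}'}\} = \Delta_{\mathcal{G}'},
\]
which is the desired conclusion. There is no substantive obstacle in this argument: the only point that requires a word of justification is the well-definedness of $(\sfo_{A}, \sfo_{B}) \mapsto A \cup B$ on exchange pairs, and this is immediate from $\Ext_{\sft}^{1}(\sfo_{B}, \sfo_{A}) \neq 0$ forcing the two arcs to cross. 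Once this is noted, the statement is a direct consequence of the fact that both $\Delta_{\mathcal{G}}$ and the notion of equivalence are defined purely in terms of the set $\exch{\mathcal{G}}$.
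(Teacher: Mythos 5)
Your proposal is correct and follows essentially the same (one-line) argument as the paper: $\Delta_{\mathcal{G}}$ is by construction a function of $\exch{\mathcal{G}}$ alone, and equivalence means $\exch{\mathcal{G}} = \exch{\mathcal{G}'}$. The extra well-definedness remark about $(\sfo_{A}, \sfo_{B}) \mapsto A \cup B$ is already contained in the paper's construction of $\Delta_{\mathcal{G}}$, so nothing further is needed.
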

\begin{proof}
If $\mathcal{G}$ and $\mathcal{G}'$ are equivalent, then $\exch{\mathcal{G}} = \exch{\mathcal{G}'}$, so $\Delta_{\mathcal{G}} = \Delta_{\mathcal{G}'}$.
\end{proof}

Let $T_{0}, T_{1}, \dots, T_{l}$ be the sequence of triangulations of the $(n + 3)$-gon corresponding to the maximal green sequence $\sft_{0}, \sft_{1}, \dots, \sft_{l}$.
Let $\sigma = A_{i} \cup B_{i}$ be the simplex corresponding to the exchange pair $(\sfo_{A_{i}}, \sfo_{B_{i}})$ from $\sft_{i - 1}$ to $\sft_{i}$.
Then the triangulation $T_{i - 1}$ is obtained from the triangulation $T_{i}$ by replacing the arc $A_{i}$ with the arc $B_{i}$.
Hence $T_{i - 1}$ and $T_{i}$ contain the quadrilateral $A_{i} \cup B_{i}$ and one can equivalently say that $T_{i}$ is obtained from $T_{i - 1}$ by replacing the lower facets of $\sigma$ by the upper facets of $\sigma$.
In this case, we say that $T_{i}$ is the \emph{increasing flip} of $T_{i - 1}$ given by $\sigma$.
This perspective will be useful in the proof of the following proposition.

\begin{proposition}\label{prop:delta_g_triang}
We have that $\Delta_{\mathcal{G}}$ is a triangulation of $\mathcal{M}_{\sft}$.
\end{proposition}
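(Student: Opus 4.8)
The plan is to verify directly the three conditions of Definition~\ref{def:mat_triang} for the collection $\Delta_{\mathcal{G}} = \{\sigma_1, \dots, \sigma_l\}$ with $\sigma_i = A_i \cup B_i$. The organising principle is the sequence of polygon triangulations $U_0, U_1, \dots, U_l$ corresponding to the cluster-tilting objects $\sft_0, \dots, \sft_l$, so that $U_0$ is the triangulation of $\sft_0 = \sft$, namely $T + 1$, while $U_l$ is the triangulation of $\Sigma\sft$, namely $T$. Consecutive triangulations $U_{i-1}$ and $U_i$ differ by the single flip inside the quadrilateral $\sigma_i$ that removes the arc $A_i$ and inserts the arc $B_i$. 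Because the mutation is $\sft$-green we have $\Ext^1_{\sft}(\sfo_{B_i}, \sfo_{A_i}) \neq 0$, so writing $\sigma_i = \{a, b, c, d\}$ with $a \prec b \prec c \prec d$, Lemma~\ref{lem:key} gives $\chi_T(a,b,c,d) = +1$, with $A_i = \{a,c\}$ and $B_i = \{b,d\}$. Comparing with Definition~\ref{def:up_low_facets}, the two triangles of $U_{i-1}$ meeting along $A_i$ are exactly the lower facets of $\sigma_i$, and the two triangles of $U_i$ meeting along $B_i$ are exactly its upper facets. Thus each green flip replaces the two lower facets of $\sigma_i$ by its two upper facets, and in particular the $\sigma_i$ are positively oriented and pairwise distinct, since a quadrilateral cannot be flipped green twice.

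Condition~\eqref{op:mat_triang:rest} is immediate: for a four-element basis $\sigma$, the restriction $\mathcal{M}_T(\sigma)$ is a rank-four uniform oriented matroid on four elements, that is, a simplex, every subset of which is an intersection of its triangular facets and hence a face. As $\sigma \cap \tau$ is a subset of both $\sigma$ and $\tau$, it is a common face of $\mathcal{M}_T(\sigma)$ and $\mathcal{M}_T(\tau)$.

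For condition~\eqref{op:mat_triang:facet}, I would track each triangular facet $F$ of a cell $\sigma_i$ through the sweep. If $F$ is an upper facet of $\sigma_i$ then $F$ is a triangle of $U_i$ created by the flip at step $i$; either it survives to $U_l = T$, in which case it is an upper facet of $\mathcal{M}_T$ by Proposition~\ref{prop:facets}, or it is destroyed by the first later flip at some step $k > i$, where it necessarily appears as a lower facet of $\sigma_k$. Symmetrically, a lower facet of $\sigma_i$ is a triangle of $U_{i-1}$ that is either a triangle of $U_0 = T + 1$, hence a lower facet of $\mathcal{M}_T$, or was created as an upper facet of some earlier $\sigma_j$. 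To conclude that an internal facet lies in \emph{precisely} two cells, I would show that once a triangle is destroyed it never reappears, so that each internal triangle is created exactly once and destroyed exactly once; this is the point at which the green, rather than arbitrary, nature of the flips is essential, as it forces the sweep to be monotone.

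Finally, condition~\eqref{op:mat_triang:int} asks that no two cells overlap on a circuit. By Proposition~\ref{prop:circuits} any circuit of $\mathcal{M}_T$ has a five-element support $\{p \prec q \prec r \prec s \prec t\}$ with signs $(\{p,r,t\},\{q,s\})$ up to reorientation, so any overlap of two cells takes place inside the restriction to such a support, which is the oriented matroid of the pentagon and is governed by Lemma~\ref{lem:one_upper_one_lower}. I expect the cleanest route to both this and the exactness in the previous step is the inductive \emph{stacking} formulation: letting $\Delta_i = \{\sigma_1, \dots, \sigma_i\}$, show that $\Delta_i$ triangulates the region whose lower boundary is $T + 1$ and whose upper boundary is $U_i$, and that adjoining $\sigma_{i+1}$, whose lower facets are exactly the two triangles of $U_i$ meeting along $A_{i+1}$ and thus lie on the current upper boundary, produces $\Delta_{i+1}$ with new upper boundary $U_{i+1}$. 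The main obstacle, and the heart of the argument, is precisely this inductive step: one must show that $\sigma_{i+1}$ is glued strictly above $\Delta_i$ and does not meet the interior of any earlier cell. This is where positivity of $\chi_T$ on every cell is used decisively, via Proposition~\ref{prop:circuits} to exclude the offending circuits and Lemma~\ref{lem:one_upper_one_lower} to control how facets are shared; the resulting monotonicity simultaneously supplies the exact facet count needed for condition~\eqref{op:mat_triang:facet} and the absence of overlaps needed for condition~\eqref{op:mat_triang:int}.
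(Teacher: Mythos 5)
Your treatment of condition~\eqref{op:mat_triang:rest} matches the paper's (restrictions to four-element bases are uniform, so every subset is a face), and your ``at least two cells'' argument for condition~\eqref{op:mat_triang:facet} is essentially the paper's: an internal facet $F$ first appears in some $U_k$ with $k>0$, where it must be an upper facet of $\sigma_k$. But the two genuinely hard points are left as announced intentions rather than arguments, and they are exactly where the content of the proposition lies. For the ``at most two cells'' half of~\eqref{op:mat_triang:facet}, you correctly reduce to the claim that a triangle, once destroyed, never reappears in the sweep, and then assert that greenness ``forces the sweep to be monotone''. That assertion is the whole point and is not self-evident. The paper proves it by noting that if $F$ entered, left, and re-entered the sequence $U_0,\dots,U_l$, the same would be true of some internal arc $A$ of $F$, so the indecomposable $\sfo_A$ would enter, leave, and re-enter the maximal green sequence, contradicting \cite[Lemma~3.4]{njw-mg}. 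Some such input about maximal green sequences is unavoidable here.

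Condition~\eqref{op:mat_triang:int} is not proved at all in your outline. The inductive ``region'' picture ($\Delta_i$ triangulates the region between $T+1$ and $U_i$, and $\sigma_{i+1}$ is glued strictly above it) is a geometric argument that is not directly available for an abstract, possibly non-realisable oriented matroid, and your appeal to Lemma~\ref{lem:one_upper_one_lower} risks circularity: that lemma's proof derives a contradiction from the assumption that its two simplices lie in a common \emph{triangulation}, i.e.\ it already uses the non-overlap condition you are trying to establish. The paper's argument is instead purely combinatorial: if $\sigma\supseteq C^{+}=\{a,c,e\}$ and $\tau\supseteq\underline{C}\setminus\{x\}$ for a circuit with support $\{a\prec b\prec c\prec d\prec e\}$ (Proposition~\ref{prop:circuits}), then $\chi_{\sft}(b,c,d,e)=+1$, so $\Ext^{1}_{\sft}(\sfo_{\{c,e\}},\sfo_{\{b,d\}})\neq 0$ by Lemma~\ref{lem:key}, and whichever of $\sigma$, $\tau$ occurs first in $\mathcal{G}$ forces the corresponding exchange pairs into an order forbidden by \cite[Lemma~3.1(2)]{njw-mg}. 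Without importing these ordering constraints on maximal green sequences, your outline does not close.
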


In order to prove this proposition, we will rely on the following lemmas on maximal green sequences from \cite{njw-mg}.

\begin{lemma}[{\cite[Lemma~3.4]{njw-mg}}]\label{lem:njw-mg_3.4}
Let $\sft \in \cc{n}$ be a basic cluster-tilting object and let $\sft_{0}, \sft_{1}, \dots, \sft_{l}$ be a maximal green sequence $\mathcal{G}$ of $\sft$.

Then, given an indecomposable object $\sfo \in \cc{n}$, there exist $j, k \in [l]$ with $j \leqslant k$ such that \[
\{\sft_{i} \st \sfo \text{ is a direct summand of } \sft_{i}\} = \{\sft_{j}, \sft_{j + 1}, \dots, \sft_{k}\},
\]
if this set is non-empty.
\end{lemma}

In other words, an indecomposable object $\sfo$ enters $\mathcal{G}$ at most once, leaves $\mathcal{G}$ at most once, and enters before it leaves if it does both.

\begin{lemma}[{\cite[Corollary~3.2(2)]{njw-mg}}]\label{lem:njw-mg_3.2}
Let $\sft \in \cc{n}$ be a basic cluster-tilting object and let $\sft_{0}, \sft_{1}, \dots, \sft_{l}$ be a maximal green sequence $\mathcal{G}$ of $\sft$.

Let $\sfx, \sfy \in \cc{n}$ be indecomposable objects and suppose that there exist $j, k \in [l]$ with $j \leqslant k$ such that $\sfx$ is a direct summand of $\sft_{j}$ and $\sfy$ is a direct summand of~$\sft_{k}$.
In this case, we say that $\sfx$ \emph{precedes} $\sfy$ in $\mathcal{G}$, and we have that $\Ext_{\sft}^{1}(\sfx, \sfy) = 0$.
\end{lemma}

\begin{proof}[Proof of Proposition~\ref{prop:delta_g_triang}]
We verify the properties of Definition~\ref{def:mat_triang}.
%
We first verify that \eqref{op:mat_triang:facet} holds.
Let $T$ be the triangulation of the $(n + 3)$-gon corresponding to $\Sigma\sft = \sft_{l}$, so that $T + 1$ is the triangulation corresponding to $\sft = \sft_{0}$.
Suppose that $\sigma \in \Delta_{\mathcal{G}}$, so that $\sigma = A_{i} \cup B_{i}$ for $(\sfo_{A_{i}}, \sfo_{B_{i}}) \in \exch{\mathcal{G}}$.
Let $F$ be a facet of $\sigma$ containing $A_{i}$; one can argue similarly for facets of $\sigma$ containing~$B_{i}$.
If $F$ is a triangle of $T + 1$, then it is a facet of $\mathcal{M}_{T}$ by Proposition~\ref{prop:facets}.

Thus, suppose that $F$ is not a triangle of $T + 1$.
Then, if $T_{j}$ is the triangulation corresponding to $\sft_{j}$, let $T_{k}$ be the first triangulation where $F$ appears.
By assumption, $k \neq 0$.
There is then an exchange pair $(\sfo_{A_{k}}, \sfo_{B_{k}})$ and a simplex $\sigma' = A_{k} \cup B_{k} \in \Delta_{\mathcal{G}}$.
Since $F$ is a triangle of $T_{k}$ but not of $T_{k - 1}$, we have that $F$ must also be a facet of $\sigma'$, and must contain~$B_{k}$.
This means that $\sigma \neq \sigma'$, since we cannot have $(A_{i}, B_{i}) = (A_{k}, B_{k})$, as $F$ contains $A_{i}$ and~$B_{k}$.
Hence, $F$ is contained in at least two simplices of $\Delta_{\mathcal{G}}$.

We must now show that $F$ cannot be contained in more than two simplices of~$\Delta_{\mathcal{G}}$.
Suppose that we have another simplex $\sigma'' \in \Delta_{\mathcal{G}}$ with $F$ as a facet of~$\sigma''$.
We have a total order on $\Delta_{\mathcal{G}}$ given by the order of the exchange pairs of~$\mathcal{G}$.
When a simplex $\tau$ corresponds to the exchange pair from $\sft_{h - 1}$ to $\sft_{h}$, the lower facets of $\tau$ leave $T_{h - 1}$ and are replaced by the upper facets of $\tau$.
Since we have at least three simplices in $\Delta_{\mathcal{G}}$ which contain $F$, we have that $F$ enters the sequence of triangulations $T_{0}, T_{1}, \dots, T_{l}$, leaves the sequence, and then enters again.
In particular, this must be true for some $1$-dimensional face $A$ of $F$, which we can choose to be an arc in the $(n + 3)$-gon, since $n + 3 \geqslant 4$.
This then gives us an indecomposable object $\sfo_{A}$ which enters $\mathcal{G}$, leaves, and then enters again.
But this contradicts Lemma~\ref{lem:njw-mg_3.4}.

Finally, we must show \eqref{op:mat_triang:int}: that no two simplices $\sigma, \tau \in \Delta$ overlap on a circuit.
Suppose for contradiction that, in fact, there is a circuit $(C^{+}, C^{-})$ of $\mathcal{M}_{T}$ such that $\sigma \supseteq C^{+}$ and $\tau \supseteq \underline{C} \setminus \{x\}$ for some $x \in C^{+}$.
In particular, $\tau \supseteq C^{-}$.
By rotating, we can assume that $\underline{C} = \{a \prec b \prec c \prec d \prec e\}$ with $C^{\pm} = \{a, c, e\}$ and $C^{\mp} = \{b, d\}$ by Proposition~\ref{prop:circuits}.
Without loss of generality, suppose that $C^{+} = \{a, c, e\}$ and $C^{-} = \{b, d\}$.
Suppose now that $\sigma$ precedes $\tau$ in the order given by $\mathcal{G}$.
We have that $\chi_{\sft}(b, c, d, e) = +1$ since the description of $C$ gives us that $\mathcal{M}_{T}(\underline{C})$ is the oriented matroid of a three-dimensional cyclic polytope with five vertices.
Hence $\Ext_{\sft}^{1}(\sfo_{ce}, \sfo_{bd}) \neq 0$ by Lemma~\ref{lem:key}.
However, since $\sigma$ precedes $\tau$, we have that $\sfo_{ce}$ must precede $\sfo_{bd}$ in $\mathcal{G}$, which contradicts Lemma~\ref{lem:njw-mg_3.2}.
If $\tau$ precedes $\sigma$, then note that $\chi_{T}(a, b, c, d) = +1$, so $\Ext_{\sft}^{1}(\sfo_{bd}, \sfo_{ac}) \neq 0$, which contradicts Lemma~\ref{lem:njw-mg_3.2} in a similar way.
\end{proof}

\begin{proposition}
The triangulation $\Delta_{\mathcal{G}}$ is stackable.
\end{proposition}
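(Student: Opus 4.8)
The plan is to use the linear order on $\Delta_{\mathcal{G}}$ coming from $\mathcal{G}$ itself as a witness that $\triangrel$ is a partial order. Write $\sigma_{1}, \sigma_{2}, \dots, \sigma_{l}$ for the $3$-simplices of $\Delta_{\mathcal{G}}$ listed in the order of the mutations of $\mathcal{G}$, so that $\sigma_{i} = A_{i} \cup B_{i}$, and let $\prec_{\mathcal{G}}$ denote the resulting strict total order, with $\preccurlyeq_{\mathcal{G}}$ its reflexive closure. Since $\triangrel$ is reflexive and transitive by construction, to show it is a partial order it suffices to establish antisymmetry, and for this it is enough to check that $\prec_{\mathcal{G}}$ linearly extends $\triangrel$, namely that every generating pair $\sigma \triangrel \tau$ with $\sigma \neq \tau$ satisfies $\sigma \prec_{\mathcal{G}} \tau$. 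Indeed, the reflexive transitive closure of a relation whose generating pairs lie in $\prec_{\mathcal{G}}$ stays inside the partial order $\preccurlyeq_{\mathcal{G}}$, and any subrelation of a partial order is antisymmetric.

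The key step is therefore to show that whenever $F = \sigma \cap \tau$ is an upper facet of $\sigma = \sigma_{i}$ and a lower facet of $\tau = \sigma_{j}$, we have $i < j$. For this I would reuse the analysis from the proof that $\Delta_{\mathcal{G}}$ is a triangulation. Processing the simplex $\sigma_{i}$ at the $i$-th mutation removes the arc $A_{i}$ and inserts the arc $B_{i}$; since the mutation is $\sft$-green we have $\chi_{\sft}(a, b, c, d) = +1$ by Lemma~\ref{lem:key}, so by Definition~\ref{def:up_low_facets} the two lower facets of $\sigma_{i}$ are exactly the triangles of $T_{i-1}$ incident to the diagonal $A_{i}$, while the two upper facets are the triangles of $T_{i}$ incident to $B_{i}$. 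Hence an upper facet of $\sigma_{i}$ enters the sequence of triangulations $T_{0}, T_{1}, \dots, T_{l}$ precisely at step $i$, and a lower facet of a simplex leaves precisely at the step of that simplex. Applying this to $F$ gives $F \in T_{i} \setminus T_{i-1}$ and $F \in T_{j-1} \setminus T_{j}$.

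Finally, \cite[Lemma~3.4]{njw-mg}, as already invoked in the proof that $\Delta_{\mathcal{G}}$ is a triangulation, guarantees that each internal triangle occupies a single contiguous block of the sequence $T_{0}, \dots, T_{l}$, entering once and leaving once. Combined with the previous paragraph this forces $i < j$; in particular $i \neq j$, since $F$ cannot be simultaneously an upper and a lower facet of a single simplex. Thus $\sigma \prec_{\mathcal{G}} \tau$, so $\prec_{\mathcal{G}}$ linearly extends $\triangrel$, antisymmetry follows, and $\Delta_{\mathcal{G}}$ is stackable. The only genuine content here is the contiguity of facet appearances, which is exactly the input of \cite[Lemma~3.4]{njw-mg} already used in the preceding proposition; everything else is a formal manipulation of orders, so I do not expect a serious obstacle beyond correctly matching the entering/leaving of a facet with the upper/lower dichotomy.
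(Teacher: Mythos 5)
Your proposal is correct and follows essentially the same route as the paper: both take the total order on simplices induced by the mutation order of $\mathcal{G}$, reduce stackability to showing that each generating pair of $\triangrel$ is compatible with that total order, and derive this compatibility from the contiguity statement of \cite[Lemma~3.4]{njw-mg}. The only cosmetic difference is that you track the entering/leaving of the whole triangle $F$ (deducing its contiguity from that of its edges, as the paper does in the preceding proposition), whereas the paper tracks the single arc $B_{i}$ contained in the upper facets of $\sigma$.
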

\begin{proof}
First note that $\mathcal{G}$ gives a total order $\leqslant_{\mathcal{G}}$ on its exchange pairs according to the order in which they appear.
Since the exchange pairs of $\mathcal{G}$ are in bijection with the simplices of $\Delta_{\mathcal{G}}$, we have that $\leqslant_{\mathcal{G}}$ is also a total order on the simplices of $\Delta$.
We claim that the relation $\triangrel$ on $\Delta$ is contained in the relation $\leqslant_{\mathcal{G}}$.
From this it follows that $\triangrel$ is a partial order, since any reflexive transitive relation contained in a total order is a partial order.

In order to show this, it suffices to show that whenever we have two simplices $\sigma$ and $\tau$ of $\Delta$ such that $\sigma \cap \tau$ is an upper facet of $\sigma$ and a lower facet of $\tau$, we have that $\sigma \leqslant_{\mathcal{G}} \tau$.
Let $A_{i}$ be the intersection of the lower facets of $\sigma$, with $B_{i}$ the intersection of the upper facets; similarly, let $A_{j}$ be the intersection of the lower facets of $\tau$, with $B_{j}$ the intersection of the upper facets.

By Definition~\ref{def:up_low_facets} and Lemma~\ref{lem:key}, we have that $(\sfo_{A_{i}}, \sfo_{B_{i}})$ and $(\sfo_{A_{j}}, \sfo_{B_{j}})$ are the respective exchange pairs corresponding to $\sigma$ and~$\tau$.
Let $\sft_{j - 1}$ be the cluster-tilting object of $\mathcal{G}$ on which the mutation of $\sfo_{A_{j}}$ for $\sfo_{B_{j}}$ is performed.
Then, if $T_{j - 1}$ is the triangulation of the $(n + 3)$-gon corresponding to $\sft_{j - 1}$, we have that $T_{j - 1}$ contains the lower facets of~$\tau$.
Since one of the lower facets of $\tau$ is an upper facet of $\sigma$, and each upper facet of $\sigma$ contains~$B_{i}$, we have that $T_{j - 1}$ contains the arc~$B_{i}$.
This means that $\sfo_{B_{i}}$ is a direct summand of $\sft_{j - 1}$, and so Lemma~\ref{lem:njw-mg_3.4} ensures that $(\sfo_{A_{i}}, \sfo_{B_{i}})$ occurs before~$\sft_{j - 1}$, which gives that $\sigma \leqslant_{\mathcal{G}} \tau$.
\end{proof}

\subsection{Triangulation to maximal green sequence}

We now describe the inverse construction, which starts with a stackable triangulation $\Delta$ of $\mathcal{M}_{\sft}$, and outputs an equivalence class of maximal green sequences $[\mathcal{G}]$ of~$\sft$.
As in the previous section, we have fixed a cluster-tilting object $\sft$ in $\cc{n}$, and have let $T$ be the triangulation of the $(n + 3)$-gon corresponding to $\Sigma\sft$.

\begin{proposition}\label{prop:stack->seq}
Let $\Delta$ be a stackable triangulation of $\mathcal{M}_{T}$ with $\leqslant$ a linear extension of the partial order $\triangrel$ on the simplices of $\Delta$.
Letting the total order $\leqslant$ on the simplices of $\Delta$ be $\sigma_{1} < \sigma_{2} < \dots < \sigma_{l}$, there is then a sequence of triangulations $T_{0}, T_{1}, \dots, T_{l}$ of the $(n + 3)$-gon such that
\begin{enumerate*}
\item $T_{0} = T + 1$;\label{op:stack->seq:t0}
\item $T_{i}$ is the increasing flip of $T_{i - 1}$ given by $\sigma_{i}$;\label{op:stack->seq:ti}
\item $T_{l} = T$.\label{op:stack->seq:tl}
\end{enumerate*}
\end{proposition}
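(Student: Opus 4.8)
The plan is to track, for the fixed stackable triangulation $\Delta$ with its total order $\sigma_{1} < \sigma_{2} < \dots < \sigma_{l}$ (a linear extension of $\triangrel$), the precise life-span of each triangle $F$ of the $(n+3)$-gon as it is inserted and deleted by the flips. I would begin with a bookkeeping step classifying facets. By Definition~\ref{def:mat_triang}\eqref{op:mat_triang:facet} together with Proposition~\ref{prop:facets}, each triangle $F$ occurring as a facet of a cell of $\Delta$ is of exactly one of two types: an \emph{internal} facet, shared by precisely two cells, or a \emph{boundary} facet, lying in precisely one cell and equal to a triangle of $T$ or of $T+1$. By Lemma~\ref{lem:one_upper_one_lower}, an internal $F$ is an upper facet of one of its two cells and a lower facet of the other. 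A short sign computation from the proof of Proposition~\ref{prop:facets} and the alternating property of $\chi_{T}$ (Definition~\ref{def:chiro}\eqref{op:chiro:alt}), compared against Definition~\ref{def:up_low_facets}, yields the \emph{role-matching} for boundary facets: such an $F$ is an upper (resp.\ lower) facet of its unique cell precisely when it is a triangle of $T$ (resp.\ $T+1$); here one uses $m \geq 4$ to know that $T$ and $T+1$ share no triangle.

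Using this classification I would assign to each such $F$ two indices: if $F$ is an upper facet of $\sigma_{p}$ set $\mathrm{enter}(F) = p$, and if $F$ is a triangle of $T+1$ set $\mathrm{enter}(F) = 0$; if $F$ is a lower facet of $\sigma_{q}$ set $\mathrm{exit}(F) = q$, and if $F$ is a triangle of $T$ set $\mathrm{exit}(F) = l+1$. Role-matching guarantees that these are well-defined and cover every case. The \textbf{crux} of the argument is the inequality $\mathrm{enter}(F) < \mathrm{exit}(F)$, and this is exactly where stackability is used: for an internal facet $F = \sigma_{p} \cap \sigma_{q}$, an upper facet of $\sigma_{p}$ and a lower facet of $\sigma_{q}$, the defining relation gives $\sigma_{p} \triangrel \sigma_{q}$, hence $p < q$ in any linear extension, while the boundary cases are immediate. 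I would then define $T_{i} := \{F : \mathrm{enter}(F) \leq i < \mathrm{exit}(F)\}$ and read off \eqref{op:stack->seq:t0} and \eqref{op:stack->seq:tl}: the condition $\mathrm{enter}(F) \leq 0$ forces $F$ to be a triangle of $T+1$, so $T_{0} = T+1$, whereas $\mathrm{exit}(F) > l$ forces $F$ to be a triangle of $T$, so $T_{l} = T$.

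It then remains to verify \eqref{op:stack->seq:ti}, that passing from $T_{i-1}$ to $T_{i}$ is the increasing flip given by $\sigma_{i}$. Comparing the two index-intervals, the symmetric difference $T_{i-1} \triangle T_{i}$ consists exactly of the $F$ with $\mathrm{enter}(F) = i$ or $\mathrm{exit}(F) = i$; by the definitions these are precisely the two upper facets of $\sigma_{i}$ (added at step $i$) and the two lower facets of $\sigma_{i}$ (deleted at step $i$). The inequality $\mathrm{enter}(F) < \mathrm{exit}(F)$ ensures that the two lower facets already lie in $T_{i-1}$ and the two upper facets do not, so that in $T_{i-1}$ the quadrilateral $\sigma_{i}$ is genuinely triangulated by its lower diagonal, and replacing it by the upper diagonal is by definition the increasing flip given by $\sigma_{i}$. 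Finally each $T_{i}$ is a triangulation of the $(n+3)$-gon, since $T_{0} = T+1$ is one and a diagonal flip sends a triangulation to a triangulation, so this propagates by induction along \eqref{op:stack->seq:ti}.

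I expect the main obstacle to be the bookkeeping lemma of the first paragraph, specifically verifying role-matching for boundary facets and that a boundary facet meets only one cell; by contrast the conceptually essential input, the inequality $\mathrm{enter}(F) < \mathrm{exit}(F)$ extracted from the linear extension of $\triangrel$, is short once the facet classification is in place, and the remaining verifications are purely combinatorial comparisons of the index-intervals.
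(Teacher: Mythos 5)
Your proof is correct and rests on the same essential ingredients as the paper's --- Definition~\ref{def:mat_triang}\eqref{op:mat_triang:facet}, Lemma~\ref{lem:one_upper_one_lower}, the upper/lower classification of boundary facets via Proposition~\ref{prop:facets}, and the linear extension of $\triangrel$ --- but it packages them differently. The paper builds $T_{0}, \dots, T_{l}$ inductively, setting $T_{0} := T+1$ by fiat and proving at each step that the lower facets of $\sigma_{i}$ survive into $T_{i-1}$; you instead give a closed-form description $T_{i} = \{F : \mathrm{enter}(F) \leqslant i < \mathrm{exit}(F)\}$ and verify all three properties by comparing index intervals. Your version makes the role of stackability maximally transparent (it is exactly the inequality $\mathrm{enter}(F) < \mathrm{exit}(F)$) and cleanly separates the bookkeeping from the ordering argument, at the cost of needing the facet classification up front for \emph{all} relevant triangles, including at the base case: your identity $T_{0} = T+1$ (rather than merely $T_{0} \subseteq T+1$) requires that every triangle of $T+1$ actually occur as a lower facet of some cell of $\Delta$, a covering fact that the paper's inductive set-up only needs at the final step $T_{l} = T$, where it likewise attributes it to Definition~\ref{def:mat_triang}\eqref{op:mat_triang:facet}. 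The ``role-matching'' step you flag --- that a lower (resp.\ upper) facet of a cell which happens to be a facet of $\mathcal{M}_{T}$ must be a lower (resp.\ upper) facet of $\mathcal{M}_{T}$ --- is genuinely needed and is also used, without comment, in the paper's proof (``Then it must be a lower facet of $\mathcal{M}_{T}$''); your justification via the alternating property of $\chi_{T}$ and the sign computations in the proof of Proposition~\ref{prop:facets} is the right one.
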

\begin{proof}
We construct the sequence $T_{0}, T_{1}, \dots, T_{l}$ inductively by letting $T_{0} = T$ and letting $T_{i}$ be the increasing flip of $T_{i - 1}$ given by $\sigma_{i}$.
We show that this is well-defined, that is: the lower facets of $\sigma_{i}$ are in fact contained in $T_{i - 1}$.

For the base case, $i = 1$, let $F$ be a lower facet of $\sigma_{1}$.
By Definition~\ref{def:mat_triang}\eqref{op:mat_triang:facet}, we have that $F$ is either a facet of $\mathcal{M}_{T}$, or is contained in precisely two simplices of $\Delta$.
If $F$ is contained in another simplex $\sigma_{j}$ of $\Delta$, then it must be an upper facet of $\sigma_{j}$ by Lemma~\ref{lem:one_upper_one_lower}.
However, then we would have $\sigma_{j} \triangrel \sigma_{1}$, which would imply that $\sigma_{j} < \sigma_{1}$, since $\leqslant$ is a linear extension of $\triangrel$.
Hence, all lower facets of $\sigma_{1}$ are facets of $\mathcal{M}_{T}$.
It is clear from the definition of lower facets that they are in fact lower facets of $\mathcal{M}_{T}$.
Since the lower facets of $\mathcal{M}_{T}$ are given by $T + 1$, the base case holds.

Now, for the inductive step, let $F$ be one of the lower facets of $\sigma_{i}$.
Again, we have that $F$ is either a facet of $\mathcal{M}_{T}$, or is contained in precisely two simplices of $\Delta$.

Suppose first that $F$ is a facet of $\mathcal{M}_{T}$.
Then it must be a lower facet of $\mathcal{M}_{T}$.
Thus, $F$ is a triangle of $T + 1$.
Moreover, $F$ cannot be contained in any other simplices of $\Delta$; in particular, this would contradict Lemma~\ref{lem:one_upper_one_lower}.
Hence, $F$ must still be a triangle of $T_{i - 1}$, since it could not have been removed by any other simplex.

Now suppose that $F$ is contained in precisely one other simplex $\sigma_{j}$ of $\Delta$.
By Lemma~\ref{lem:one_upper_one_lower}, we have that $F$ must be an upper facet of $\sigma_{j}$, and so we have $\sigma_{j} \leqslant \sigma_{i}$.
By the induction hypothesis, we have that the lower facets of $\sigma_{j}$ are contained in $T_{j - 1}$, and so $T_{j}$ contains the upper facets of $\sigma_{j}$, and so contains $F$.
Since $F$ is only contained in $\sigma_{j}$ and $\sigma_{i}$ by Definition~\ref{def:mat_triang}\eqref{op:mat_triang:facet}, it must still be a triangle of $T_{i - 1}$.

We finally must show that \eqref{op:stack->seq:tl} holds.
Indeed, every upper facet of $\mathcal{M}_{T}$ must be an upper facet of some $\sigma_{i}$ by Definition~\ref{def:mat_triang}\eqref{op:mat_triang:facet} and must be an upper facet of exactly one $\sigma_{i}$ by Lemma~\ref{lem:one_upper_one_lower}.
Hence, for every upper facet $F$ of $\mathcal{M}_{T}$, there must be some triangulation $T_{i - 1}$ which contains the lower facets of the simplex $\sigma_{i}$, so that $T_{i}$ contains the upper facets of the simplex $\sigma_{i}$, and so contains $F$.
After $\sigma_{i}$, there is no further $\sigma_{j}$ with $F$ as a facet, and so $T_{l}$ contains $F$.
Therefore $T_{l}$ contains all upper facets of $\mathcal{M}_{T}$, and so $T_{l} = T$, as desired.
\end{proof}

\begin{construction}
Let $\Delta$ be a stackable triangulation of $\mathcal{M}_{T}$.
As before, since $\triangrel$ is a partial order on $\Delta$, we may extend it to a total order $\leqslant$.
Let $\sigma_{1}, \sigma_{2}, \dots, \sigma_{l}$ be the simplices of $\Delta$, ordered according to $\leqslant$.

We then take the sequence $T_{0}$, $T_{1}$, \dots, $T_{l}$ of triangulations from Proposition~\ref{prop:stack->seq}.
By Section~\ref{sect:back:cca} and Lemma~\ref{lem:key}, this corresponds to a sequence of cluster-tilting objects $\sft_{0}$, $\sft_{1}$, \dots, $\sft_{l}$ of $\cc{n}$ such that $\sft_{0} = \sft$, $\sft_{i}$ is a green mutation of $\sft_{i - 1}$, and $\sft_{l} = \Sigma\sft$.
This therefore gives a maximal green sequence $\mathcal{G}^{\leqslant}_{\Delta}$ of $\sft$, and hence an equivalence class~$[\mathcal{G}^{\leqslant}_{\Delta}]$.
\end{construction}

\begin{lemma}
The equivalence class of maximal green sequences $[\mathcal{G}^{\leqslant}_{\Delta}]$ is independent of the linear extension $\leqslant$ chosen of the partial order $\triangrel$.
\end{lemma}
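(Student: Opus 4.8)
The plan is to show that the set of exchange pairs $\exch{\mathcal{G}^{\leqslant}_{\Delta}}$ is determined by $\Delta$ alone, with no reference to the chosen linear extension $\leqslant$. Since two maximal green sequences are equivalent precisely when their sets of exchange pairs coincide, this immediately yields the independence of the class $[\mathcal{G}^{\leqslant}_{\Delta}]$.

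First I would unwind the construction to identify the exchange pair associated with each simplex. By Proposition~\ref{prop:stack->seq}, the $i$-th mutation of $\mathcal{G}^{\leqslant}_{\Delta}$ is the increasing flip given by $\sigma_{i}$, which replaces the two lower facets of $\sigma_{i}$ (sharing the diagonal $A_{i}$) by its two upper facets (sharing the diagonal $B_{i}$). Thus this flip removes the arc $A_{i}$ from $T_{i-1}$ and inserts the arc $B_{i}$, so its exchange pair is $(\sfo_{A_{i}}, \sfo_{B_{i}})$; greenness of the mutation is confirmed by Lemma~\ref{lem:key}. The decisive point is that, by Definition~\ref{def:up_low_facets}, the partition of the three-element faces of $\sigma_{i}$ into upper and lower facets depends only on $\sigma_{i}$ and $T$ (through the sign $\chi_{T}$), and not on $\leqslant$. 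Consequently $A_{i}$, $B_{i}$, and therefore the ordered exchange pair, depend only on the simplex $\sigma_{i}$.

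It then follows that
\[
\exch{\mathcal{G}^{\leqslant}_{\Delta}} = \{(\sfo_{A(\sigma)}, \sfo_{B(\sigma)}) \st \sigma \in \Delta\},
\]
where $A(\sigma)$ and $B(\sigma)$ denote the intersections of the lower and upper facets of $\sigma$ respectively. The right-hand side makes no reference to the order $\leqslant$, so it is the same set for every linear extension of $\triangrel$. Hence any two maximal green sequences $\mathcal{G}^{\leqslant}_{\Delta}$ and $\mathcal{G}^{\leqslant'}_{\Delta}$ arising from different linear extensions have equal sets of exchange pairs, are therefore equivalent, and determine the same class $[\mathcal{G}^{\leqslant}_{\Delta}]$.

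I expect the only genuine point requiring care to be the identification of the $i$-th flip's exchange pair with $(\sfo_{A_{i}}, \sfo_{B_{i}})$: one must check that an increasing flip in a quadrilateral $abcd$ is exactly the green mutation exchanging the lower diagonal $A_{i}$ for the upper diagonal $B_{i}$, which is where Definition~\ref{def:up_low_facets} and Lemma~\ref{lem:key} are invoked. Once this local identification is in place, the invariance is a purely formal consequence of the definition of equivalence of maximal green sequences.
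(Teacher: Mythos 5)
Your proposal is correct and follows essentially the same route as the paper: both arguments identify the exchange pair of the $i$-th flip as $(\sfo_{A_{i}}, \sfo_{B_{i}})$, where $A_{i}$ and $B_{i}$ are the intersections of the lower and upper facets of $\sigma_{i}$, and observe that this pair is determined by the simplex alone (via Definition~\ref{def:up_low_facets}), so the set of exchange pairs is independent of the linear extension. Your explicit remark that the upper/lower partition depends only on $\chi_{T}$ makes the order-independence slightly more transparent than the paper's phrasing, but the argument is the same.
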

\begin{proof}
If $\leqslant'$ is a different linear extension, then we have $\exch{\mathcal{G}^{\leqslant}_{\Delta}} = \exch{\mathcal{G}^{\leqslant'}_{\Delta}}$, since both correspond to the set of exchange pairs $(\sfo_{A_{i}}, \sfo_{B_{i}})$ where $\sigma_{i} = A_{i} \cup B_{i}$ is a simplex of $\Delta$, with $A_{i}$ the intersection of its lower facets and $B_{i}$ the intersection of its upper facets.
In turn, this is because, for either $\leqslant$ or $\leqslant'$, if $T_{i}$ is the increasing flip of $T_{i - 1}$ given by~$\sigma_{i}$, then $T_{i}$ is the result of replacing the arc $A_{i}$ of $T_{i - 1}$ with the arc~$B_{i}$.
By the relation between triangulations of the $(n + 3)$-gon and cluster-tilting objects from Section~\ref{sect:back:cca}, we then have that the corresponding cluster-tilting objects $\sft_{i}$ and $\sft_{i - 1}$ are related by the mutation with exchange pair $(\sfo_{A_{i}}, \sfo_{B_{i}})$.
\end{proof}

Since our constructions in this subsection and the previous are inverse to each other, we obtain the following theorem.

\begin{theorem}\label{thm:main}
Let $\sft$ be a basic cluster-tilting object in $\cc{n}$.
There is then a bijection between stackable triangulations of $\mathcal{M}_{\sft}$ and equivalence classes of maximal green sequences of~$\sft$.
\end{theorem}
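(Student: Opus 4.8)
The plan is to show that the two constructions of the preceding subsections are mutually inverse bijections. Write $\Phi$ for the map sending an equivalence class $[\mathcal{G}]$ to the stackable triangulation $\Delta_{\mathcal{G}}$, and $\Psi$ for the map sending a stackable triangulation $\Delta$ to the equivalence class $[\mathcal{G}^{\leqslant}_{\Delta}]$. Both maps land where claimed and are well-defined: that $\Delta_{\mathcal{G}}$ is a stackable triangulation and that it depends only on $[\mathcal{G}]$ were established above, as was the independence of $[\mathcal{G}^{\leqslant}_{\Delta}]$ from the chosen linear extension $\leqslant$. It therefore remains only to verify $\Phi \circ \Psi = \mathrm{id}$ and $\Psi \circ \Phi = \mathrm{id}$.

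For the composition $\Phi \circ \Psi$, I would start from a stackable triangulation $\Delta$ with simplices $\sigma_{1}, \dots, \sigma_{l}$, write each $\sigma_{i} = A_{i} \cup B_{i}$ with $A_{i}$ the intersection of its lower facets and $B_{i}$ the intersection of its upper facets, and recall that the exchange pairs of $\mathcal{G}^{\leqslant}_{\Delta}$ are exactly the $(\sfo_{A_{i}}, \sfo_{B_{i}})$. Forming $\Delta_{\mathcal{G}^{\leqslant}_{\Delta}}$ then returns the set $\{A_{i} \cup B_{i}\} = \{\sigma_{i}\} = \Delta$, so this composition is the identity. This direction is essentially immediate from unwinding the definitions.

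For the composition $\Psi \circ \Phi$, I would start from $[\mathcal{G}]$ with exchange pairs $(\sfo_{A_{i}}, \sfo_{B_{i}})$ satisfying the green condition $\Ext_{\sft}^{1}(\sfo_{B_{i}}, \sfo_{A_{i}}) \neq 0$, and form $\Delta_{\mathcal{G}}$, whose simplices are the $\sigma_{i} = A_{i} \cup B_{i}$. Reconstructing a maximal green sequence via $\Psi$ assigns to each $\sigma_{i}$ the exchange pair $(\sfo_{A'_{i}}, \sfo_{B'_{i}})$, where $A'_{i}$ is the intersection of the lower facets of $\sigma_{i}$ and $B'_{i}$ the intersection of its upper facets. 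The crux is to check $A'_{i} = A_{i}$ and $B'_{i} = B_{i}$. Writing $A_{i} = \{a, c\}$ and $B_{i} = \{b, d\}$ with $a \prec b \prec c \prec d$, Lemma~\ref{lem:key} converts the green condition $\Ext_{\sft}^{1}(\sfo_{B_{i}}, \sfo_{A_{i}}) \neq 0$ into $\chi_{T}(a, b, c, d) = +1$; by Definition~\ref{def:up_low_facets} this is precisely the configuration in which $\{b, d\} = B_{i}$ is the intersection of the upper facets and $\{a, c\} = A_{i}$ the intersection of the lower facets. Hence $\exch{\mathcal{G}^{\leqslant}_{\Delta_{\mathcal{G}}}} = \exch{\mathcal{G}}$ and the two equivalence classes agree.

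I expect no serious obstacle: all the substantive work --- that $\Delta_{\mathcal{G}}$ is a stackable triangulation, and that Proposition~\ref{prop:stack->seq} produces a legitimate sequence of increasing flips --- has already been carried out. The one point demanding care is ensuring that the simplex-to-exchange-pair dictionary used in the two constructions genuinely coincides, and this rests entirely on Lemma~\ref{lem:key}, which pins the sign $\chi_{T} = +1$ to the non-vanishing of $\Ext_{\sft}^{1}$ and thereby to the correct assignment of upper and lower facets in Definition~\ref{def:up_low_facets}.
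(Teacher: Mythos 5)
Your proposal is correct and takes essentially the same route as the paper, which simply asserts that the two constructions are mutually inverse; you have usefully spelled out the key point, namely that the simplex-to-exchange-pair dictionary is consistent because Lemma~\ref{lem:key} ties $\Ext_{\sft}^{1}(\sfo_{B_{i}}, \sfo_{A_{i}}) \neq 0$ to $\chi_{T}(a,b,c,d) = +1$ and hence to the upper/lower facet assignment of Definition~\ref{def:up_low_facets}.
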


\appendix

\section{Module, derived, and cluster categories of type~$\mathbb{A}$}

In this appendix, we prove Lemma~\ref{lem:composition} using combinatorial models for the representation theory of the path algebra of linearly oriented type~$\mathbb{A}$.
Our aim is partly expository, for readers less familiar with quiver representations.
We denote the quiver \[
1 \leftarrow 2 \leftarrow \dots \leftarrow n
\]
by $\mathbb{A}_{n}$ and its path algebra over a field $K$ by $K\mathbb{A}_{n}$.

\subsection{The module category}

For background on quiver representations we refer to \cite{schiff} or \cite{ass}.
It is well-known that finite-dimensional right $KQ$-modules are equivalent to finite-dimensional representations of $Q$ over~$K$ \cite[Chapter~III, Theorem~1.6]{ass}.
It is also well-known that the indecomposable representations of $\mathbb{A}_{n}$ are given by \[
0 \to \dots \to \overset{i}{K} \xleftarrow{\id_{K}} \overset{i + 1}{K} \xleftarrow{\id_{K}} \dots \xleftarrow{\id_{K}} \overset{j}{K} \to 0 \to \dots \to 0,
\]
where $1 \leqslant i \leqslant j \leqslant n$.
This follows for instance from Gabriel's Theorem \cite{gabriel}.
We denote this indecomposable representation by $\sfm_{i, j + 2}$.
Hence, the indecomposable representations of $\mathbb{A}_{n}$ are $\{\sfm_{ac} \st a, c \in [n + 2] \text{ with } a + 2 \leqslant c\}$.

Suppose we have a homomorphism of indecomposable representations representations $f \colon \sfm_{ac} \to \sfm_{bd}$ given by
\begin{equation}\label{eq:rep_hom}
\begin{tikzcd}
V_{1} \ar[d,"{f_{1}}"] & V_{2} \ar[l,"{g_{1}}"] \ar[d,"{f_{2}}"] & \dots \ar[l,"{g_{2}}"] & V_{n} \ar[d,"{f_{n}}"] \ar[l,"{g_{n - 1}}"] \\
W_{1} & W_{2} \ar[l,"{h_{1}}"] & \dots \ar[l,"{h_{2}}"] & W_{n}, \ar[l,"{h_{n - 1}}"] 
\end{tikzcd}
\end{equation}
where $\sfm_{ac}$ is $V_{1} \leftarrow \dots \leftarrow V_{n}$ and $\sfm_{bd}$ is $W_{1} \leftarrow \dots \leftarrow W_{n}$.
If $a - 1 < b < c - 1 < d$, there is a non-zero homomorphism $f_{ac, bd} \colon \sfm_{ac} \to \sfm_{bd}$ which factorises as \[
\sfm_{ac} \twoheadrightarrow \sfm_{bc} \hookrightarrow \sfm_{bd}.
\]
This homomorphism is in fact unique up to scalar, since the choice of scalar for one of the non-zero components $f_{i}$ for $i \in [b, c]$ determines all of the others, since the squares in the diagram commute.
By inspecting the diagram \eqref{eq:rep_hom}, note further that $\Hom_{K\mathbb{A}_{n}}(\sfm_{ac}, \sfm_{bd}) = 0$ if any of the following hold: $b \leqslant a - 1$, $c \leqslant b + 1$, $d \leqslant c - 1$.
We have therefore shown the following lemma.

\begin{lemma}[{\cite[Theorem~3.6(3)]{ot}}]\label{lem:hom_existence}
We have that $\Hom_{K\mathbb{A}_{n}}(\sfm_{ac}, \sfm_{bd}) \not\cong 0$ if and only if $a - 1 < b < c - 1 < d$, and in this case $\Hom_{K\mathbb{A}_{n}}(\sfm_{ac}, \sfm_{bd}) \cong K$.
\end{lemma}

We can go further and give a criterion for when two non-zero morphisms compose to give a non-zero morphisms.
(Experts will note that this is just another way of rewriting the well-known relations for the Auslander algebra.)

\begin{lemma}\label{lem:module_comp}
A pair of non-zero morphisms $\sfm_{p_{0}p_{1}} \to \sfm_{q_{0}q_{1}}$ and $\sfm_{q_{0}q_{1}} \to \sfm_{r_{0}r_{1}}$ compose to give a non-zero morphism $\sfm_{p_{0}p_{1}} \to \sfm_{r_{0}r_{1}}$ if and only if $p_{0} - 1 \leqslant q_{0} - 1 < r_{0} < p_{1} - 1 \leqslant q_{1} - 1 < r_{1}$.
\end{lemma}
\begin{proof}
We have that $p_{0} - 1 \leqslant q_{0} - 1 < r_{0} < p_{1} - 1 \leqslant q_{1} - 1 < r_{1}$ if and only if the following three sets of inequalities all hold.
\begin{align*}
&p_{0} - 1 < q_{0} < p_{1} - 1 < q_{1}, \\
&q_{0} - 1 < r_{0} < q_{1} - 1 < r_{1}, \\
&p_{0} - 1 < r_{0} < p_{1} - 1 < r_{1}. \\
\end{align*}
By Lemma~\ref{lem:hom_existence}, we have that these three sets of inequalities are equivalent to the non-vanishing of the Hom-spaces
\begin{align*}
\Hom_{K\mathbb{A}_{n}}(\sfm_{p_{0}p_{1}}, \sfm_{q_{0}q_{1}}) &\neq 0, \\
\Hom_{K\mathbb{A}_{n}}(\sfm_{q_{0}q_{1}}, \sfm_{r_{0}r_{1}}) &\neq 0, \\
\Hom_{K\mathbb{A}_{n}}(\sfm_{p_{0}p_{1}}, \sfm_{r_{0}r_{1}}) &\neq 0.
\end{align*}

Hence, the lemma is equivalent to the statement that two morphisms $\sfm_{p_{0}p_{1}} \to \sfm_{q_{0}q_{1}}$ and $\sfm_{q_{0}q_{1}} \to \sfm_{r_{0}r_{1}}$ to give a non-zero morphism $\sfm_{p_{0}p_{1}} \to \sfm_{r_{0}r_{1}}$ if and only if there exists a non-zero morphism $\sfm_{p_{0}p_{1}} \to \sfm_{r_{0}r_{1}}$.
The forwards direction is immediate.
For the backwards direction, suppose that we have two non-zero morphisms $\sfm_{p_{0}p_{1}} \to \sfm_{q_{0}q_{1}}$ and $\sfm_{q_{0}q_{1}} \to \sfm_{r_{0}r_{1}}$.
By Lemma~\ref{lem:hom_existence}, these morphisms are unique up to scalar, so we may assume without loss of generality that these morphisms are $f_{p_{0}p_{1},q_{0}q_{1}}$ and $f_{q_{0}q_{1},r_{0}r_{1}}$.
Then suppose that we have that $\Hom_{K\mathbb{A}_{n}}(\sfm_{p_{0}p_{1}}, \sfm_{r_{0}r_{1}}) \neq 0$, so that in particular we have the non-zero morphism $f_{p_{0}p_{1}, r_{0}r_{1}}$.

It suffices to show that $f_{q_{0}q_{1}, r_{0}r_{1}} \circ f_{p_{0}p_{1}, q_{0}q_{1}} = f_{p_{0}p_{1}, r_{0}r_{1}}$.
This follows from the fact that the diagram \[
\begin{tikzcd}
\sfm_{p_{0}p_{1}} \ar[rd,twoheadrightarrow] && \sfm_{q_{0}q_{1}} \ar[rd,twoheadrightarrow] && \sfm_{r_{0}r_{1}} \\
& \sfm_{q_{0}p_{1}} \ar[rd,twoheadrightarrow] \ar[ru,hookrightarrow] && \sfm_{r_{0}q_{1}} \ar[ru,hookrightarrow] & \\
&& \sfm_{r_{0}p_{1}} \ar[ru,hookrightarrow] &&
\end{tikzcd}
\]
commutes.
Indeed, $f_{p_{0}p_{1}, q_{0}q_{1}}$ is the composition $\sfm_{p_{0}p_{1}} \twoheadrightarrow \sfm_{q_{0}p_{1}} \hookrightarrow \sfm_{q_{0}q_{1}}$; $f_{q_{0}q_{1}, r_{0}r_{1}}$ is the composition $\sfm_{q_{0}q_{1}} \twoheadrightarrow \sfm_{r_{0}q_{1}} \hookrightarrow \sfm_{r_{0}r_{1}}$; and $f_{p_{0}p_{1},r_{0}r_{1}}$ is the composition $\sfm_{p_{0}p_{1}} \twoheadrightarrow \sfm_{q_{0}p_{1}} \twoheadrightarrow \sfm_{r_{0}p_{1}} \hookrightarrow \sfm_{r_{0}q_{1}} \hookrightarrow \sfm_{r_{0}r_{1}}$.
Commutativity of the middle square follows easily from considering the explicit quiver representations: the morphism given by either path around the square is precisely the map which is zero on the vector spaces at vertices in $[q_{0}, r_{0} - 1]$ and the identity on the vector spaces at vertices in $[r_{0}, p_{1} - 2]$.
\end{proof}

\subsection{The derived category}

We now describe the bounded derived category $\mathscr{D}_{n} := \mathscr{D}^{b}(K\mathbb{A}_{n})$ of right $K\mathbb{A}_{n}$-modules.
For $a, c \in [n + 2]$ with $a + 2 \leqslant c$, we define $\sfu_{ac}$ to be the complex given by $\sfm_{ac}$ concentrated in degree zero.
We then define
\begin{align}
\sfu_{ac}[1] &= \sfu_{c - 1, a + n + 2}, \\
\sfu_{ac}[-1] &= \sfu_{c - (n + 2), a + 1}.\label{eq:[-1]_desc}
\end{align}
Hence, every complex which is given by an indecomposable $K\mathbb{A}_{n}$-module concentrated in one degree is labelled as $\sfu_{ac}$ for some $a,c$.
By a result of Happel \cite[Lemma~5.2]{happel}, indecomposable objects in $\mathscr{D}_{n}$ are precisely given by complexes of indecomposable $K\mathbb{A}_{n}$-modules concentrated in one degree.
Hence, the labelling of the indecomposable objects $\sfu_{ac}$ gives a bijection between the indecomposable objects of $\mathscr{D}_{n}$ and the elements of $\{(a, c) \in \mathbb{Z}^{2} \st a + 2 \leqslant c \leqslant a + n + 1\}$.

This labelling is a special case of the labelling in \cite[Section~6]{ot}, as follows from combining \cite[Lemma~6.6]{ot} and \cite[Lemma~6.7]{ot}.
We then have the following analogue of Lemma~\ref{lem:hom_existence} for $\mathscr{D}_{n}$.

\begin{lemma}[{\cite[Proof of Proposition~6.1]{ot}}]\label{lem:der_hom}
We have $\Hom_{\mathscr{D}_{n}}(\sfu_{ac}, \sfu_{bd}) \not\cong 0$ if and only if $a - 1 < b < c - 1 < d < a + n + 2$, and in this case $\Hom_{\mathscr{D}_{n}}(\sfu_{ac}, \sfu_{bd}) \cong K$.
\end{lemma}

Recalling the Nakayama functor $\nu$ from Section~\ref{sect:back:cca}, we denote by $\tau$ the functor $\nu[-1]$, which is an auto-equivalence of~$\mathscr{D}_{n}$ known as the \emph{Auslander--Reiten translate}.
It admits the following combinatorial description.

\begin{lemma}[{\cite[Lemma~6.6]{ot}}]\label{lem:tau}
We have that $\tau \sfu_{ac} = \sfu_{a - 1, c - 1}$.
\end{lemma}

We can then use this to prove the following analogue of Lemma~\ref{lem:module_comp}.

\begin{lemma}\label{lem:der_comp}
A pair of non-zero morphisms $\sfu_{p_{0}p_{1}} \to \sfu_{q_{0}q_{1}}$ and $\sfu_{q_{0}q_{1}} \to \sfu_{r_{0}r_{1}}$ compose to give a non-zero morphism $\sfu_{p_{0}p_{1}} \to \sfu_{r_{0}r_{1}}$ if and only if $p_{0} - 1 \leqslant q_{0} - 1 < r_{0} < p_{1} - 1 \leqslant q_{1} - 1 < r_{1} < p_{0} + n + 2$.
\end{lemma}
\begin{proof}
Suppose that we have $\sfu_{p_{0}p_{1}} \to \sfu_{q_{0}q_{1}}$ and $\sfu_{q_{0}q_{1}} \to \sfu_{r_{0}r_{1}}$ composing to give a non-zero morphism $\sfu_{p_{0}p_{1}} \to \sfu_{r_{0}r_{1}}$.
Then, by applying the auto-equivalence $\tau^{p_{0} - 1}$, we can assume that $p_{0} = 1$, so that $\sfu_{p_{0}p_{1}}$ is concentrated in degree zero.
Then the existence of non-zero morphisms $\sfu_{p_{0}p_{1}} \to \sfu_{q_{0}q_{1}}$ gives by Lemma~\ref{lem:der_hom} that $p_{0} \leqslant q_{0}$ and $q_{1} \leqslant p_{0} + n + 1 = n + 2$, so that $\sfu_{q_{0}q_{1}}$ is concentrated in degree zero too.
We likewise deduce that $\sfu_{r_{0}r_{1}}$ is concentrated in degree zero.
We then use Lemma~\ref{lem:module_comp} to deduce that $p_{0} - 1 \leqslant q_{0} - 1 < r_{0} < p_{1} - 1 \leqslant q_{1} - 1 < r_{1}$.
The fact that $r_{1} < n + 3 = p_{0} + n + 2$ follows from the fact that $\sfu_{r_{0}r_{1}}$ is concentrated in degree zero.

Conversely, suppose that $p_{0} - 1 \leqslant q_{0} - 1 < r_{0} < p_{1} - 1 \leqslant q_{1} - 1 < r_{1} < p_{0} + n + 2$.
Then we may again apply $\tau^{p_{0} - 1}$, whence these inequalities give us that all of $\sfu_{p_{0}p_{1}}$, $\sfu_{q_{0}q_{1}}$, and $\sfu_{r_{0}r_{1}}$ are concentrated in degree zero.
We can then apply Lemma~\ref{lem:module_comp} to deduce that we have a pair of non-zero morphisms $\sfu_{p_{0}p_{1}} \to \sfu_{q_{0}q_{1}}$ and $\sfu_{q_{0}q_{1}} \to \sfu_{r_{0}r_{1}}$ composing to give a non-zero morphism $\sfu_{p_{0}p_{1}} \to \sfu_{r_{0}r_{1}}$.
\end{proof}

\subsection{The cluster category}

We now deduce similar results for the cluster category $\cc{n}$.
The following description of $\nu[-2]$ can be obtained from combining \eqref{eq:[-1]_desc} with Lemma~\ref{lem:tau}.

\begin{lemma}[{\cite[Lemma~6.7]{ot}}]\label{lem:cluster_rep}
We have that $\nu \sfu_{ac} [-2] = \sfu_{c - (n + 3), a}$.
\end{lemma}

Let $\psi \colon \mathscr{D}_{n} \to \cc{n}$ be the natural projection functor sending an object to its orbit.
Lemma~\ref{lem:cluster_rep} then allows us to label the indecomposable objects of the $\cc{n}$ as $\sfo_{ac}$ such that
\begin{equation}\label{eq:modulo_cc}
\psi(\sfu_{a'c'}) = \sfo_{ac} \text{ if and only if } \{a', c'\} \equiv \{a, c\} \mod (n + 3).
\end{equation}
The condition for $\Hom_{\cc{n}}(\sfo_{ac}, \sfo_{bd}) \not\cong 0$ from Section~\ref{sect:back:cca} ($a - 1 \prec b \prec c - 1 \prec d$) then follows from \eqref{eq:modulo_cc} with Lemma~\ref{lem:der_hom}.
As always, we have that the non-zero morphism is unique up to scalar if it exists.

We can now prove Lemma~\ref{lem:composition}, which is the analogue of Lemma~\ref{lem:module_comp} and Lemma~\ref{lem:der_comp} for the cluster category.

\begin{proof}[Proof of Lemma~\ref{lem:composition}]
Suppose that $\sfo_{P} \to \sfo_{Q}$ and $\sfo_{Q} \to \sfo_{R}$ in $\cc{n}$ compose to give a non-zero morphism $\sfo_{P} \to \sfo_{R}$.
Then, since $\cc{n}$ is an orbit category of $\mathscr{D}_{n}$, we can lift these morphisms to morphisms $\sfu_{P'} \to \sfu_{Q'}$ and $\sfu_{Q'} \to \sfu_{R'}$ in $\mathscr{D}_{n}$ which compose to give a non-zero map, where $\psi(\sfu_{P'}) = \sfo_{P}$, $\psi(\sfu_{Q'}) = \sfo_{Q}$, and $\psi(\sfu_{R'}) = \sfo_{R}$.
We have non-zero morphisms $\sfu_{P'} \to \sfu_{Q'}$ and $\sfu_{Q'} \to \sfu_{R'}$ in $\mathscr{D}_{n}$ composing to give a non-zero morphism $\sfu_{P'} \to \sfu_{R'}$ if and only if 
\begin{equation}\label{eq:composition:inequality}
p'_{0} - 1 \leqslant q'_{0} - 1 < r'_{0} < p'_{1} - 1 \leqslant q'_{1} - 1 < r'_{1} < p'_{0} + n + 2
\end{equation}
by Lemma~\ref{lem:der_comp}, where $P' = p'_{0}p'_{1}$, $Q' = q'_{0}q'_{1}$, and $R' = r'_{0}r'_{1}$.
Applying \eqref{eq:modulo_cc} gives us the desired cyclic ordering $p_{0} - 1 \preccurlyeq q_{0} - 1 \prec r_{0} \prec p_{1} - 1 \preccurlyeq q_{1} - 1 \prec r_{1}$.

Conversely, if such a cyclic ordering holds, one can lift $\sfo_{P}$, $\sfo_{Q}$, and $\sfo_{R}$ to objects $\sfu_{P'}$, $\sfu_{Q'}$, and $\sfu_{R'}$ of $\mathscr{D}_{n}$ such that \eqref{eq:composition:inequality} holds.
This gives that the morphisms in $\mathscr{D}_{n}$ compose to give a non-zero morphism by Lemma~\ref{lem:der_comp}, and so the morphisms in $\cc{n}$ likewise compose to give a non-zero morphism.
\end{proof}

\bibliographystyle{alpha}
\bibliography{/home/njw/Documents/Offline_versions/Jabref/biblio.bib}

\newcommand{\etalchar}[1]{$^{#1}$}
\begin{thebibliography}{BLVS{\etalchar{+}}99}

\bibitem[AE24]{ae_tcg}
Dario Antolini and Nick Early.
\newblock The chirotropical {G}rassmannian, 2024.
\newblock arXiv:2411.07293.

\bibitem[And25]{anderson}
Laura Anderson.
\newblock {\em Oriented matroids}, volume 216 of {\em Cambridge Studies in
  Advanced Mathematics}.
\newblock Cambridge University Press, Cambridge, 2025.

\bibitem[ASS06]{ass}
Ibrahim Assem, Daniel Simson, and Andrzej Skowro\'{n}ski.
\newblock {\em Elements of the representation theory of associative algebras.
  {V}olume~1: Techniques of representation theory}, volume~65 of {\em London
  Mathematical Society Student Texts}.
\newblock Cambridge University Press, Cambridge, 2006.

\bibitem[BLVS{\etalchar{+}}99]{blswz}
Anders Bj\"{o}rner, Michel Las~Vergnas, Bernd Sturmfels, Neil White, and
  G\"{u}nter~M. Ziegler.
\newblock {\em Oriented matroids}, volume~46 of {\em Encyclopedia of
  Mathematics and its Applications}.
\newblock Cambridge University Press, Cambridge, second edition, 1999.

\bibitem[BMR{\etalchar{+}}06]{bmrrt}
Aslak~Bakke Buan, Bethany Marsh, Markus Reineke, Idun Reiten, and Gordana
  Todorov.
\newblock Tilting theory and cluster combinatorics.
\newblock {\em Adv. Math.}, 204(2):572--618, 2006.

\bibitem[Bre73]{breen}
Marilyn Breen.
\newblock Primitive {R}adon partitions for cyclic polytopes.
\newblock {\em Israel J. Math.}, 15:156--157, 1973.

\bibitem[CCS06]{ccs}
Philippe Caldero, Fr\'ed\'eric Chapoton, and Ralf Schiffler.
\newblock Quivers with relations arising from clusters ({$A_n$} case).
\newblock {\em Trans. Amer. Math. Soc.}, 358(3):1347--1364, 2006.

\bibitem[CCV11]{ccv}
Sergio Cecotti, Clay Cordova, and Cumrun Vafa.
\newblock Braids, walls, and mirrors, 2011.
\newblock arXiv:1110.2115.

\bibitem[CEZ24]{cez}
Freddy Cachazo, Nick Early, and Yong Zhang.
\newblock Color-dressed generalized biadjoint scalar amplitudes: local
  planarity.
\newblock {\em SIGMA Symmetry Integrability Geom. Methods Appl.}, 20:Paper No.
  016, 44, 2024.

\bibitem[ER96]{er}
Paul~H. Edelman and Victor Reiner.
\newblock The higher {S}tasheff--{T}amari posets.
\newblock {\em Mathematika}, 43(1):127--154, 1996.

\bibitem[FGP{\etalchar{+}}24]{fgppp}
Xin Fang, Mikhail Gorsky, Yann Palu, Pierre-Guy Plamondon, and Matthew
  Pressland.
\newblock Extriangulated ideal quotients, with applications to cluster theory
  and gentle algebras, 2024.
\newblock arXiv:2308.05524.

\bibitem[FST08]{fst}
Sergey Fomin, Michael Shapiro, and Dylan Thurston.
\newblock Cluster algebras and triangulated surfaces. {I}. {C}luster complexes.
\newblock {\em Acta Math.}, 201(1):83--146, 2008.

\bibitem[FZ02]{fz1}
Sergey Fomin and Andrei Zelevinsky.
\newblock Cluster algebras. {I}. {F}oundations.
\newblock {\em J. Amer. Math. Soc.}, 15(2):497--529 (electronic), 2002.

\bibitem[Gab72]{gabriel}
Peter Gabriel.
\newblock Unzerlegbare {D}arstellungen. {I}.
\newblock {\em Manuscripta Math.}, 6:71--103; correction, ibid. 6 (1972), 309,
  1972.

\bibitem[GW23]{njw-mg}
Mikhail Gorsky and Nicholas~J. Williams.
\newblock A structural view of maximal green sequences, 2023.
\newblock arXiv:2301.08681.

\bibitem[Hap88]{happel}
Dieter Happel.
\newblock {\em Triangulated categories in the representation theory of
  finite-dimensional algebras}, volume 119 of {\em London Mathematical Society
  Lecture Note Series}.
\newblock Cambridge University Press, Cambridge, 1988.

\bibitem[HLN21]{hln}
Martin Herschend, Yu~Liu, and Hiroyuki Nakaoka.
\newblock $n$-exangulated categories ({I}): Definitions and fundamental
  properties.
\newblock {\em J. Algebra}, 570:531--586, 2021.

\bibitem[Kel11]{kel-green}
Bernhard Keller.
\newblock On cluster theory and quantum dilogarithm identities.
\newblock In {\em Representations of algebras and related topics}, EMS Ser.
  Congr. Rep., pages 85--116. Eur. Math. Soc., Z\"{u}rich, 2011.

\bibitem[KS08]{ks_stability}
Maxim Kontsevich and Yan Soibelman.
\newblock Stability structures, motivic {D}onald\-son--{T}homas invariants and
  cluster transformations, 2008.
\newblock arXiv:0811.2435.

\bibitem[NP19]{np}
Hiroyuki Nakaoka and Yann Palu.
\newblock Extriangulated categories, {H}ovey twin cotorsion pairs and model
  structures.
\newblock {\em Cah. Topol. G\'{e}om. Diff\'{e}r. Cat\'{e}g.}, 60(2):117--193,
  2019.

\bibitem[OT12]{ot}
Steffen Oppermann and Hugh Thomas.
\newblock Higher-dimensional cluster combinatorics and representation theory.
\newblock {\em J. Eur. Math. Soc. (JEMS)}, 14(6):1679--1737, 2012.

\bibitem[PPPP23]{pppp}
Arnau Padrol, Yann Palu, Vincent Pilaud, and Pierre-Guy Plamondon.
\newblock Associahedra for finite-type cluster algebras and minimal relations
  between {$g$}-vectors.
\newblock {\em Proc. Lond. Math. Soc. (3)}, 127(3):513--588, 2023.

\bibitem[Ram97]{rambau}
J\"{o}rg Rambau.
\newblock Triangulations of cyclic polytopes and higher {B}ruhat orders.
\newblock {\em Mathematika}, 44(1):162--194, 1997.

\bibitem[Rei10]{reineke_poisson}
Markus Reineke.
\newblock Poisson automorphisms and quiver moduli.
\newblock {\em J. Inst. Math. Jussieu}, 9(3):653--667, 2010.

\bibitem[RS00]{rs-baues}
J\"{o}rg Rambau and Francisco Santos.
\newblock The generalized {B}aues problem for cyclic polytopes. {I}.
\newblock {\em European J. Combin.}, 21(1):65--83, 2000.

\bibitem[San02]{santos_tom}
Francisco Santos.
\newblock Triangulations of oriented matroids.
\newblock {\em Mem. Amer. Math. Soc.}, 156(741):viii+80, 2002.

\bibitem[Sch14]{schiff}
Ralf Schiffler.
\newblock {\em Quiver representations}.
\newblock CMS Books in Mathematics/Ouvrages de Math{\'e}matiques de la SMC.
  Springer, Cham, 2014.

\bibitem[Wil22]{njw-hst}
Nicholas~J. Williams.
\newblock New interpretations of the higher {S}tasheff--{T}a\-m\-ari orders.
\newblock {\em Adv. Math.}, 407:Paper No. 108552, 49, 2022.

\end{thebibliography}

\end{document}